\renewcommand*{\backref}[1]{}
\renewcommand*{\backrefalt}[4]{[{\tiny%
		\ifcase #1 Not cited.%
		\or Cited on page~#2.%
		\else Cited on pages #2.%
		\fi%
	}]}
\newcommand{\cay}[2]{\mathrm{Cay}(#1,#2)}
\newcommand{\Z}{\mathbb{Z}}
\newcommand{\R}{\mathbb{R}}
\renewcommand{\P}{\mathbb{P}}
\newcommand{\E}{\mathbb{E}}
\newcommand{\supp}[1]{\mathrm{supp}(#1)}
\newcommand{\thistheoremname}{}
\newtheorem*{genericthm*}{\thistheoremname}
\newenvironment{namedthm*}[1]
{\renewcommand{\thistheoremname}{#1}%
	\begin{genericthm*}}
	{\end{genericthm*}}
\theoremstyle{plain}
\newtheorem{thm}{Theorem}[section] 
\newtheorem{prop}[thm]{Proposition}
\newtheorem{lem}[thm]{Lemma}
\newtheorem{cor}[thm]{Corollary}
\theoremstyle{definition}
\newtheorem{rem}[thm]{Remark}
\newtheorem{defn}[thm]{Definition} 
\newtheorem{exmp}[thm]{Example} 
\newtheorem{question}[thm]{Question}
\newcommand{\prange}[2]{(\labelcref{#1})--(\labelcref{#2})}
\author{Joshua Frisch}
\email[Joshua Frisch]{joshfrisch@gmail.com}
\address{Department of Mathematics, UC San Diego, 9500 Giman Dr., La Jolla, CA 92093, USA.}
\author{Eduardo Silva} 
\email[Eduardo Silva]{eduardo.silva@uni-muenster.de, edosilvamuller@gmail.com}
\address{University of Münster, Einsteinstrasse 62, Münster 48149, Germany}
\urladdr{\url{https://edoasd.github.io/eduardo_silva_math/}}
\subjclass[2020]{20F69, 60B15, 60J50, 20F65, 43A05}
\keywords{Poisson boundary, wreath products, lamplighter group, entropy}
\title[The Poisson boundary of wreath products]{The Poisson boundary of wreath products}
\begin{document}
	
	\maketitle
	\begin{abstract} 
		We give a complete description of the Poisson boundary of wreath products $A\wr B\coloneqq \bigoplus_{B} A\rtimes B$  of countable groups $A$ and $B$, for probability measures $\mu$ with finite entropy where lamp configurations stabilize almost surely. If, in addition, the projection of $\mu$ to $B$ is Liouville, we prove that the Poisson boundary of $(A\wr B,\mu)$ is equal to the space of limit lamp configurations, endowed with the corresponding hitting measure. In particular, this answers an open question asked by Kaimanovich \cite{Kaimanovich2001} and Lyons-Peres \cite{LyonsPeres2021} for $B=\Z^d$, $d\ge 3$, and measures $\mu$ with a finite first moment.
	\end{abstract}
\section{Introduction}\label{introduction}
The classical \emph{Poisson integral representation formula} (see e.g.\ \cite[Theorems 4.22 and 4.23]{Ahlfors1978}) establishes an isometric isomorphism between the Banach space of bounded harmonic functions on the disk $\mathbb{D}$ and the space of bounded measurable functions on the circle $\partial \mathbb{D}$. It is possible to find an analogous duality for countable groups as follows. Let $G$ be a countable group and $\mu$ a probability measure on $G$. A function $f:G\to \R$ is said to be $\mu$-harmonic if $f(g)=\sum_{h\in G}f(gh)\mu(h)$ for every $g\in G$. The \emph{Poisson boundary} of $(G,\mu)$ is a measure space endowed with a measurable $G$-action with the following property: there is an isometric isomorphism between the space of bounded $\mu$-harmonic functions on $G$ and the space of bounded measurable functions on the Poisson boundary (see \cite[Theorem 3.1]{Furstenberg1971} and the introductions of \cite{KaimanovcihVershik1983,Kaimanovich2000}). Equivalently, the Poisson boundary encodes the asymptotic behavior of the trajectories of the \emph{$\mu$-random walk} $\{w_n\}_{n\ge 0}$ on $G$, which is the Markov chain with state space $G$ and with independent increments $\{w_{n-1}^{-1}w_n\}_{n\ge 1}$ distributed according to $\mu$. 

Let us denote by $(G^{\mathbb{N}},\P)$ the space of sample paths of the $\mu$-random walk on $G$.

\begin{defn}\label{defn: Poisson boundary original def} Let $G$ be a countable group, and let $\mu$ be a probability measure on $G$. Two sample paths $\mathbf{w}=(w_1,w_2,\ldots)$, $\mathbf{w^\prime}=(w^\prime_1,w^\prime_2,\ldots)\in G^{\mathbb{N}}$ are said to be equivalent if there exist $p,N\ge 0$ such that $w_n=w^\prime_{n+p}$ for all $n>N$. Consider the measurable hull associated with this equivalence relation. That is, the $\sigma$-algebra formed by all measurable subsets of the space of trajectories $(G^{\mathbb{N}},\P)$ which are unions of the equivalence classes of $\sim$ up to $\P$-null sets. The associated quotient space is called the \emph{Poisson boundary} of the random walk $(G,\mu)$.
\end{defn}

There are several other equivalent definitions of the Poisson boundary, and we mention some of them in Section \ref{section: preliminaries}.

The measure $\mu$ is called \emph{adapted} (resp.\ \emph{non-degenerate}) if the support of $\mu$ generates $G$ as a group (resp.\ as a semigroup). Another name for adapted measures in the literature is \emph{irreducible}. We say that an adapted probability measure $\mu$ on a group $G$ has the \emph{Liouville property} if every bounded $\mu$-harmonic function is constant. This is equivalent to $(G,\mu)$ having a trivial Poisson boundary. A natural question to ask is whether a random walk $(G,\mu)$ has the Liouville property. If the random walk does not have this property, the next problem is to realize the Poisson boundary as an explicit measure space related to the geometry of $G$. These problems have been studied in the last decades for various families of groups under different conditions on the measure $\mu$. We refer to Subsection \ref{subsection: background} for a more detailed description of some of these results and the corresponding references.

A key quantity in the study of Poisson boundaries is \emph{entropy}. Given a probability measure $\mu$ on $G$, its entropy is $H(\mu)\coloneqq -\sum_{g\in G}\mu(g)\log(\mu(g))$. 

\begin{defn}\label{defn: asymptotic entropy} The entropy of the random walk $(G,\mu)$, also called the \emph{asymptotic entropy}, is defined as
	\(h_{\mu}\coloneqq\lim_{n\to \infty} \frac{H(\mu^{*n})}{n}.\)
\end{defn}
This quantity was introduced by Avez \cite{Avez1972}, who proved that if $\mu$ is finitely supported and $h_\mu=0$ then $(G,\mu)$ has a trivial Poisson boundary. Furthermore, for any probability measure with $H(\mu)<\infty$, the \emph{entropy criterion} of Derriennic \cite{Derrienic1980} and Kaimanovich-Vershik \cite{KaimanovcihVershik1983} states that the triviality of the Poisson boundary is equivalent to the vanishing of $h_{\mu}$. This criterion was extended to a \emph{conditional entropy criterion} by Kaimanovich \cite[Theorem 2]{Kaimanovich1985conditional}, \cite[Theorem 4.6]{Kaimanovich2000}, that establishes whether a candidate for the Poisson boundary is equal to it. This result is often applied via the associated \emph{ray criterion} \cite[Theorem 5.5]{Kaimanovich2000} and \emph{strip criterion} \cite[Theorem 6.4]{Kaimanovich2000}, which are deduced from it.

It is known that any adapted random walk on an abelian group has a trivial Poisson boundary \cite{Blackwell1955} (see also \cite{ChoquetDeny1960,DoobSnellWilliamson1960}). The same holds for nilpotent groups, which was first proved for non-degenerate probability measures in \cite{DynkinMaljutov1961} and for general probability measures in \cite[Théorème IV.1]{Azencott1970} (see also \cite[Lemma 4.7]{Jaworski2004}). For non-degenerate measures, it is furthermore proved in \cite{Margulis1966} that every positive harmonic function on a nilpotent group is constant. Generalizing the case of nilpotent groups, the Poisson boundary is trivial for any adapted measure on a countable hyper-FC-central group \cite{LinZaidenberg1998,Jaworski2004}, \cite[Proposition 5.10]{ErschlerKaimanovich2023}. These results are satisfied by all adapted step distributions without requiring any additional hypothesis. In contrast, results describing a \emph{non-trivial} Poisson boundary of a random walk on a group usually assume some control over the tail decay of the measure $\mu$, commonly through the finiteness of some moment. The only known results that describe a non-trivial Poisson boundary for all probability measures $\mu$ with finite entropy, without assuming extra conditions on $\mu$, are \cite[Theorem 1.2]{ForghaniTiozzo2019} for free non-abelian semigroups and \cite{ChawlaForghaniFrischTiozzo2022} for hyperbolic groups and more generally acylindrically hyperbolic groups. 

 There are families of groups that possess a geometric boundary that, for any adapted random walk, can be endowed with a probability measure so that the associated probability space is an equivariant quotient of the Poisson boundary. Such a quotient is called a \emph{$\mu$-boundary}, see Subsection \ref{subsection: rw and poisson boundary}. This is the case for any adapted random walk on a free group $F_k$, for which the geometric boundary is the space $\partial F_k$ of infinite reduced words \cite[Section 4]{Furstenberg1967}, and more generally, for hyperbolic groups with the Gromov boundary (see \cite[Corollary 1]{Woess1993} and \cite[Theorem 7.6]{Kaimanovich2000}). Hence, in these cases it is a natural problem to ask whether the geometric boundary corresponds to the Poisson boundary, without any extra assumptions on the measure $\mu$. This is currently an open problem, and there are no results in this direction for adapted probability measures with infinite entropy. The situation is different for amenable groups. Amenability is equivalent to the existence of an adapted probability measure with a trivial Poisson boundary, and thus it is not possible to find a geometric boundary that describes the Poisson boundary for all adapted measures. An important family of groups covered by the results of the current paper are wreath products $A\wr \Z^d$, $d\ge 1$, for $A$ a countable group. If $A$ is amenable, then $A\wr \Z^d$ is also amenable. Our results provide a complete description of the Poisson boundary of random walks on $A\wr \Z^d$, whenever the step distribution of the random walk has finite entropy and where the natural candidate for the Poisson boundary of a wreath product is well-defined (see Theorem \ref{thm: lamplighter Poisson boundary} and Corollary \ref{cor: main cor of main theorem} below). 

We also remark that there are almost no results that identify a non-trivial Poisson boundary for infinite entropy measures on groups. One exception that we are aware of is \cite[Theorem A]{ErschlerKaimanovich2023}, which describes the Poisson boundary of countable ICC groups for measures from the construction of \cite{FrischHartmanTamuzVahidi2019}, that can be chosen to have infinite entropy. We mention that \cite{ForghaniKaimanovichINPREP} describes the Poisson boundary of the free semigroup for measures with a finite first logarithmic moment without assuming finite entropy (see also \cite[Theorem 1.2]{ForghaniTiozzo2019} and \cite[Theorem 3.6.3]{ForghaniThesis2015}).

\subsection{Main results}\label{subsection: main results}
Given $A$ and $B$ countable groups, their \emph{wreath product} $A\wr B$ is the semi-direct product $\bigoplus_{B}A\rtimes B$, where $B$ acts on the direct sum by translations (we remark that some authors use the notation $B\wr A$). Any random walk $\{w_n\}_{n\ge 0}$ on $A\wr B$ can be decomposed using the semi-direct product structure as $w_n=(\varphi_n,X_n)$, $n\ge 0$. Here $\varphi_n\in \bigoplus_{B}A$ is called the \emph{lamp configuration at instant $n$}, and in general it is not a random walk on $\bigoplus_{B}A$. On the other hand, $X_n$ is called the \emph{base position at instant $n$}, and it does describe a random walk on the \emph{base group} $B$. 

Kaimanovich and Vershik \cite[Section 6]{KaimanovcihVershik1983} studied the groups $\Z/2\Z\wr \Z^d$, $d\ge 1$, and showed that if the step distribution is finitely supported and the random walk $\{X_n\}_{n\ge 0}$ on $\Z^d$ is transient, then the lamp configurations $\{\varphi_n\}_{n\ge 0}$ almost surely stabilize to a \emph{limit lamp configuration} $\varphi_{\infty}$ (see Definition \ref{def: stabilization of lamps}). If the random walk is non-degenerate, this implies the non-triviality of the associated Poisson boundary \cite[Propositions 6.1 and 6.4]{KaimanovcihVershik1983}. In contrast, if the induced random walk to $\Z^d$ is recurrent, the Poisson boundary is trivial \cite[Proposition 6.3]{KaimanovcihVershik1983}. Kaimanovich and Vershik asked whether, in the transient case and for finitely supported probability measures, the convergence to the limit lamp configuration completely describes the Poisson boundary. It is natural to extend the formulation of this question to arbitrary probability measure for which lamp configurations stabilize almost surely along sample paths of the $\mu$-random walk. 

Our first main result provides a description of the Poisson boundary of $A\wr B$, under general conditions on the measure $\mu$. 
\begin{thm} \label{thm: lamplighter Poisson boundary}
	Consider non-trivial countable groups $A$ and $B$. Let $\mu$ be a probability measure on $A\wr B$ with finite entropy and such that lamp configurations stabilize almost surely. Denote by $(\partial B,\nu_B)$ the Poisson boundary of the induced random walk on $B$. Then the Poisson boundary of $(A\wr B,\mu)$ is completely described by the space $A^B \times \partial B$, endowed with the corresponding hitting measure.
\end{thm}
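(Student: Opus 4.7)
The plan is to realize the space $A^{B} \times \partial B$ (with its hitting measure) as a $\mu$-boundary, and then to invoke Kaimanovich's conditional entropy criterion to upgrade it to an identification with the full Poisson boundary.

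First, I would define the boundary map
\[
\mathbf{bnd} \colon (A\wr B)^{\mathbb{N}} \longrightarrow A^{B} \times \partial B,\qquad \mathbf{w} \longmapsto \bigl( \varphi_\infty(\mathbf{w}),\, \xi(\mathbf{w}) \bigr),
\]
where $\varphi_\infty = \lim_{n\to\infty} \varphi_n$ is the almost-sure limit lamp configuration guaranteed by the stabilization hypothesis, and $\xi = \lim_{n\to\infty} X_n \in \partial B$ is the almost-sure boundary value of the projected walk, which exists because $(\partial B,\nu_B)$ is the Poisson boundary of $\{X_n\}_{n\ge 0}$ (whose step distribution, being a pushforward of $\mu$, inherits finite entropy). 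Setting $\nu \coloneqq \mathbf{bnd}_* \mathbb{P}$, the map $\mathbf{bnd}$ is tail-measurable and $A\wr B$-equivariant, so $(A^{B} \times \partial B, \nu)$ is a $\mu$-boundary.

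Next, to conclude that this $\mu$-boundary coincides with the full Poisson boundary, I would apply Kaimanovich's conditional entropy criterion: it suffices to show
\[
\lim_{n \to \infty}\, \frac{1}{n}\, H\bigl( w_n \,\big|\, \varphi_\infty, \xi \bigr) = 0.
\]
Using the decomposition $w_n = (\varphi_n, X_n)$ and the subadditivity of conditional entropy,
\[
H(w_n \mid \varphi_\infty, \xi) \le H(X_n \mid \xi) + H(\varphi_n \mid \varphi_\infty, X_n, \xi).
\]
The first term is $o(n)$ by the conditional entropy criterion applied to the induced random walk on $B$, for which $(\partial B,\nu_B)$ is by assumption the Poisson boundary. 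The second term is the lamp-side contribution: given $\varphi_\infty$, the value $\varphi_n(b)$ is pinned down at every site $b \in B$ that is not modified by the walker after time $n$, so all remaining uncertainty concentrates on sites of future lamp operations. Combining this observation with the stabilization hypothesis, I would estimate the conditional distribution of the post-$n$ trajectory given $(X_n, \xi)$ and extract the desired sublinear bound.

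The main obstacle will be this second conditional entropy estimate: controlling $H(\varphi_n \mid \varphi_\infty, X_n, \xi) = o(n)$ using only the assumptions of finite entropy and lamp stabilization. Classical treatments, notably the Lyons--Peres approach for $B = \mathbb{Z}^d$, leverage moment hypotheses to derive quantitative speed estimates for the base walk and thereby quantify how quickly lamps lock in their limit values; in the generality treated here such metric tools are unavailable, and the argument must instead proceed through entropy-theoretic reasoning alone, carefully tracking how much information the pair $(\varphi_\infty,\xi)$ conveys about the future lamp modifications. This is where the bulk of the technical difficulty is expected to sit.
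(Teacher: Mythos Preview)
Your high-level framework is correct and matches the paper's: realize $A^B\times\partial B$ as a $\mu$-boundary and verify Kaimanovich's conditional entropy criterion. You also correctly isolate the lamp-side conditional entropy as the crux. However, the specific decomposition you propose,
\[
H(w_n \mid \varphi_\infty,\xi)\;\le\; H(X_n\mid\xi)\;+\;H(\varphi_n\mid \varphi_\infty,X_n,\xi),
\]
has a genuine gap: conditioning only on the \emph{single} base position $X_n$ is not enough to control the second term. The value $\varphi_n(b)$ at an unstable site $b$ (one where $\varphi_n(b)\neq\varphi_\infty(b)$) is determined by the entire history of lamp increments applied at $b$ up to time $n$, which in turn depends on the \emph{past} base trajectory $X_0,\ldots,X_n$. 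Knowing $X_n$ (and $\xi$, which concerns the future) tells you essentially nothing about that past. Your plan to ``estimate the conditional distribution of the post-$n$ trajectory given $(X_n,\xi)$'' may help locate unstable sites, but it cannot recover the \emph{values} $\varphi_n(b)$ there; and if $A$ is infinite those values carry unbounded entropy per site.

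The paper fixes this by conditioning not on $X_n$ alone but on the entire \emph{coarse trajectory} $\mathcal{P}_n^{t_0}=(X_{t_0},X_{2t_0},\ldots,X_{\lfloor n/t_0\rfloor t_0})$. The key point (Lemma~\ref{lem: coarse traj has small entropy}) is that for $t_0$ large, $H_{\partial B}(\mathcal{P}_n^{t_0})<\varepsilon n$; this is obtained by applying the conditional entropy criterion to the $B$-walk at scale $t_0$ and summing over the $\lfloor n/t_0\rfloor$ steps. One further conditions on the increments occurring during the rare ``bad'' intervals (those containing an increment outside a fixed finite window $R\times L$), which has entropy $<\varepsilon n$ unconditionally. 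Together these two pieces of low-entropy data determine a coarse neighborhood of the past trajectory; outside it $\varphi_n$ is already determined, and inside it the stabilization hypothesis shows that the unstable sites, the time intervals visiting them, and the good increments applied there all have entropy $o(n)$ (Section~\ref{section: entropy inside the coarse neighborhood}). This passage from $X_n$ to the full coarse trajectory is the missing idea in your outline and is the paper's main technical contribution.
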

If, in addition, the induced random walk on $B$ has a trivial Poisson boundary (in particular whenever $B=\Z^d$), Theorem \ref{thm: lamplighter Poisson boundary} implies the following.

\begin{cor}\label{cor: main cor of main theorem}
	Consider non-trivial countable groups $A$ and $B$. Let $\mu$ be a probability measure on $A\wr B$ with finite entropy and such that lamp configurations stabilize almost surely. Suppose that the Poisson boundary of the induced random walk on $B$ is trivial. Then the space of infinite lamp configurations $A^B$ endowed with the hitting measure is the Poisson boundary of $(A\wr B, \mu)$.
\end{cor}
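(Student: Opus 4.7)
The plan is to deduce the corollary directly from Theorem \ref{thm: lamplighter Poisson boundary}. Under the added hypothesis, the Poisson boundary $(\partial B,\nu_B)$ of the induced random walk on $B$ is a one-point probability space. Substituting this into the conclusion of Theorem \ref{thm: lamplighter Poisson boundary}, the factor $\partial B$ carries no information and the product space $A^B \times \partial B$ is measurably isomorphic, as a $G$-space, to $A^B$. Under this identification, the hitting measure on $A^B \times \partial B$ pushes forward to its first marginal on $A^B$, which by definition is the distribution of the limit lamp configuration $\varphi_\infty$, i.e.\ the hitting measure on $A^B$. Equivariance with respect to the $A\wr B$-action is inherited from Theorem \ref{thm: lamplighter Poisson boundary}.

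For the parenthetical claim that $B=\Z^d$ satisfies the hypothesis, I would invoke the Choquet--Deny theorem \cite{ChoquetDeny1960} (see also \cite{Blackwell1955,DoobSnellWilliamson1960}): every probability measure on a countable abelian group yields a random walk with trivial Poisson boundary. Note that no adaptedness is required, since the projection $\mu_B$ of $\mu$ to $\Z^d$ may only be supported on a proper subgroup $H\leq \Z^d$; but $H$ is itself free abelian and the walk started at the identity stays inside $H$, so triviality on $H$ transfers to $\Z^d$ by noting that bounded $\mu_B$-harmonic functions on $\Z^d$ restrict to bounded harmonic functions on each $H$-coset.

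Since the only input beyond Theorem \ref{thm: lamplighter Poisson boundary} is this classical triviality statement, I do not foresee any obstacle here: the derivation is essentially bookkeeping, and the substance of the result lies entirely in Theorem \ref{thm: lamplighter Poisson boundary}.
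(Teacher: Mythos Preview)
Your proposal is correct and matches the paper's approach: the paper presents Corollary \ref{cor: main cor of main theorem} as an immediate consequence of Theorem \ref{thm: lamplighter Poisson boundary}, with the triviality of $(\partial B,\nu_B)$ collapsing the product $A^B\times\partial B$ to $A^B$, and cites the same classical results (Blackwell, Choquet--Deny, Doob--Snell--Williamson) for the abelian case $B=\Z^d$. There is nothing to add; as you note, the substance lies entirely in Theorem \ref{thm: lamplighter Poisson boundary}.
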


The Poisson boundary of wreath products has been previously described under various conditions on the groups $A$ and $B$, as well as on the measure $\mu$ \cite{JamesPeres1996,Kaimanovich2001,KarlssonWoess2007,Sava2010,Erschler2011,LyonsPeres2021}; we explain these results in Subsection \ref{subsection: background}. These descriptions of the Poisson boundary use the space of limit lamp configurations, and the conditions on $\mu$ ensure that the stabilization phenomenon does occur. In particular, a sufficient condition for the stabilization of lamp configurations is that the measure $\mu$ has a finite first moment and that the induced random walk on $B$ is transient \cite[Lemma 1.1]{Erschler2011} (see also \cite[Theorem 3.3]{Kaimanovich1991} for $B=\Z^d$, and \cite[Theorem 2.9]{KarlssonWoess2007} for $B$ a free group). For the original groups studied by Kaimanovich and Vershik, Erschler proved that the Poisson boundary of a non-degenerate random walk on $A\wr \Z^d$, with $d\ge 5$, is equal to the space of limit lamp configurations, with the hypothesis that $A$ is finitely generated and that $\mu$ has a finite third moment \cite[Theorem 1]{Erschler2011}. Later, Lyons and Peres proved that this also holds for $d=3,4$, improving the assumption on $\mu$ by requiring only a finite second moment \cite[Theorem 5.1]{LyonsPeres2021}.  

Theorem \ref{thm: lamplighter Poisson boundary} generalizes all previous descriptions of the Poisson boundary of wreath products to a setting where there is no control over the tail decay of the measure $\mu$ and the previous techniques do not work. For our result to hold, we must add the hypothesis of stabilization of lamp configurations along sample paths. This is not always the case, and there are examples where this does not happen for measures $\mu$ with a finite $(1-\varepsilon)$-moment, for any $\varepsilon>0$ \cite[Proposition 1.1]{Kaimanovich1983} (see also \cite[Section 6]{Erschler2011} and the last paragraph of Section 5 in \cite{LyonsPeres2021}). Nonetheless, it is true that any non-degenerate random walk on $A\wr B$ with finite entropy and a transient projection to $B$ has a non-trivial Poisson boundary \cite[Theorem 3.1]{Erschler2004Liouville}. The description of the Poisson boundary seems more challenging in the case where there is no stabilization of the lamp configurations, as there are not even any known candidates for the Poisson boundary.

The following is a consequence of Theorem \ref{thm: lamplighter Poisson boundary}.

\begin{cor}\label{cor: finite first moment Zd poisson boundary} Let $A$ be a non-trivial finitely generated group, and let $d\ge 3$. Let $\mu$ be a non-degenerate probability measure on $A\wr \Z^d$ with a finite first moment. Then the Poisson boundary of $(A\wr \Z^d,\mu)$ is equal to the space of infinite lamp configurations endowed with the corresponding hitting measure.
\end{cor}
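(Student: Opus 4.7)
The plan is to apply Corollary \ref{cor: main cor of main theorem} with $B = \Z^d$. Under the stated hypotheses, one must verify three properties: that $\mu$ has finite entropy, that the lamp configurations stabilize almost surely along sample paths of the $\mu$-random walk, and that the induced random walk on $\Z^d$ has trivial Poisson boundary.

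The last of these is the easiest: because $\mu$ is non-degenerate on $A\wr \Z^d$, its projection $\pi_*\mu$ is adapted on $\Z^d$, and the Choquet--Deny theorem \cite{Blackwell1955} gives triviality of the Poisson boundary of $(\Z^d,\pi_*\mu)$. For the entropy hypothesis, the group $A\wr \Z^d$ is finitely generated since $A$ is, so any word-metric ball $B_n$ satisfies $|B_n|\le C e^{\lambda n}$. Decomposing
\[
H(\mu) = H\bigl(|\cdot|_{*}\mu\bigr) + \sum_{n\ge 0} \P(|w|=n)\, H(\mu|_{S_n}),
\]
the first term is finite because $|\cdot|_{*}\mu$ is a probability distribution on $\Z_{\ge 0}$ with finite mean, while the second is bounded above by $\lambda\,\E|w| + \log C < \infty$ via the exponential growth bound on $|S_n|$.

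For stabilization I would invoke \cite[Lemma 1.1]{Erschler2011}: it guarantees almost sure convergence of the lamp configurations under two hypotheses, namely that $\mu$ has finite first moment (given) and that $\pi_*\mu$ is transient on $\Z^d$. To verify the latter, note that $\pi_*\mu$ is adapted on $\Z^d$ and inherits a finite first moment from $\mu$ via the $1$-Lipschitz projection $A\wr\Z^d\to\Z^d$. For $d\ge 3$ every such random walk on $\Z^d$ is transient, either by the strong law of large numbers when the drift is nonzero, or by the Chung--Fuchs criterion when the drift vanishes. With the three hypotheses secured, Corollary \ref{cor: main cor of main theorem} identifies the Poisson boundary of $(A\wr\Z^d,\mu)$ with the space $A^{\Z^d}$ of infinite lamp configurations endowed with the hitting measure.

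The whole argument is essentially bookkeeping once Corollary \ref{cor: main cor of main theorem} is in hand; the step to treat most carefully is the transience of $\pi_*\mu$ in the centered case, where only a first moment is available, but this is a classical consequence of the Chung--Fuchs characteristic-function criterion in dimension at least three.
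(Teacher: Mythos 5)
Your proof is correct and follows essentially the same route as the paper: verify the hypotheses of Corollary \ref{cor: main cor of main theorem} (equivalently Theorem \ref{thm: lamplighter Poisson boundary}) via Choquet--Deny for triviality of $\partial\Z^d$, the fact that a finite first moment implies finite entropy, and \cite[Lemma 1.1]{Erschler2011} for stabilization given transience of the projected walk on $\Z^d$, $d\ge 3$. The only difference is that you spell out the standard proofs of the finite-entropy and transience steps, which the paper handles by citation (to Derriennic and, implicitly, to Spitzer); this is harmless and arguably makes the verification more self-contained.
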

This was an open question, asked by Kaimanovich \cite[Example 3.6.7]{Kaimanovich2001} and by Lyons-Peres at the end of Section 5 in \cite{LyonsPeres2021}. We remark that \cite[Theorem 3.6.6]{Kaimanovich2001} partially answers this question for probability measures where the projection to $\Z^d$ has non-zero mean. This covers in particular all probability measures on $A\wr \Z$ with a finite first moment and a transient projection to $\Z$, and all probability measures on $A\wr \Z^2$ with a finite second moment and a transient projection to $\Z^2$. Indeed, the transience of the induced random walk on $\Z$ (resp.\ $\Z^2$) with a finite first moment (resp.\ finite second moment) hypothesis implies that the induced random walk on $\Z$ (resp.\ $\Z^d$) has non-zero mean \cite[Theorem 8.1]{Spitzer1976}.

Our second main theorem is an extension of Corollary \ref{cor: finite first moment Zd poisson boundary} to a more general setting, where the induced random walk on the base group $B$ does not necessarily have the Liouville property. Below, we denote by $\langle \supp{\mu}\rangle_{+}$ the semigroup generated by the support of $\mu$.
\begin{thm}\label{thm: Poisson for finite first moment}
	Let $A,B$ be finitely generated groups and let $\mu$ be a probability measure on $A\wr B$ such that $\langle \supp{\mu}\rangle_{+}$ contains two distinct elements with the same projection to $B$. Assume that $\mu$ has a finite first moment and that it induces a transient random walk on $B$. Then the space of infinite lamp configurations $A^B$ endowed with the corresponding hitting measure is equal to the Poisson boundary of $(A\wr B,\mu)$.
\end{thm}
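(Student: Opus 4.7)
The plan is to reduce Theorem~\ref{thm: Poisson for finite first moment} to the finite-entropy result Theorem~\ref{thm: lamplighter Poisson boundary}. I would begin by establishing that lamp configurations stabilize almost surely. Since $\mu$ has a finite first moment on $A\wr B$ and the induced random walk on $B$ is transient, this follows directly from Erschler's stabilization lemma (\cite[Lemma 1.1]{Erschler2011}). The almost sure limit $\varphi_\infty\in A^B$ realizes $A^B$, endowed with the hitting measure of $\varphi_\infty$, as a $\mu$-boundary of $(A\wr B,\mu)$.

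The remaining task is maximality: the $\mu$-boundary $A^B$ is the full Poisson boundary. The obstacle is that a finite first moment on $A\wr B$ does not imply $H(\mu)<\infty$, since the lamp increments can carry infinite entropy even when the total step has finite word length on average; consequently Theorem~\ref{thm: lamplighter Poisson boundary}, and more generally Kaimanovich's entropy, ray, and strip criteria, do not apply directly. My strategy is to construct an auxiliary probability measure $\tilde\mu$ on $A\wr B$ with finite entropy, satisfying the hypotheses of Theorem~\ref{thm: lamplighter Poisson boundary}, whose Poisson boundary is canonically identified with that of $(A\wr B,\mu)$.

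The hypothesis on $\langle\supp{\mu}\rangle_+$ is the algebraic ingredient enabling this construction: the existence of two distinct elements $g_1=(\phi_1,x)$ and $g_2=(\phi_2,x)$ with the same base projection $x\in B$ means that some convolution power of $\mu$ supports two distinct lamp outcomes at one and the same base displacement. This ``slack'' in the lamp direction decouples the lamp updates from the (possibly) heavy-tailed lamp component of $\mu$ and permits replacing $\mu$ by a finite-entropy proxy $\tilde\mu$---for instance a mixture of induced steps with truncated, finite-entropy lamp marginals---for which lamp configurations still stabilize. Applying Theorem~\ref{thm: lamplighter Poisson boundary} to $\tilde\mu$ identifies the Poisson boundary with $A^B\times\partial B$, where $\partial B$ is the Poisson boundary of the projected walk; one then verifies that, in this regime, the projection to $B$ is Liouville, reducing the description to $A^B$.

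The main obstacle is proving that $\mu$ and $\tilde\mu$ share the same Poisson boundary. Standard comparison tools such as the conditional entropy criterion require finite entropy of $\mu$, so the identification must proceed through a direct coupling that exploits the semidirect product structure of $A\wr B$ and, crucially, the two-element hypothesis. Without the latter, the random walk cannot produce non-trivial lamp updates at a fixed base position and no such coupling is possible, suggesting that the hypothesis is sharp for the approach.
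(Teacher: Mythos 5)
Your proposal rests on a false premise and, as a result, misses the actual content of the theorem. You claim that a finite first moment on $A\wr B$ does not imply $H(\mu)<\infty$. Since $A$ and $B$ are finitely generated, $A\wr B$ is finitely generated, and for any finitely generated group a finite first moment does imply finite entropy (this is the classical fact of Derriennic that the paper invokes in the proof of Corollary \ref{cor: finite first moment Zd poisson boundary}; roughly, $H(\mu)$ is controlled by $\E|g|$ times the exponential growth rate plus a constant). So the obstacle you set out to overcome does not exist, the auxiliary measure $\tilde\mu$ is solving a non-problem, and in any case the proposed "direct coupling" identifying the Poisson boundaries of $\mu$ and $\tilde\mu$ is never constructed.

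The genuine difficulty lies where your proposal waves it away. Applying Theorem \ref{thm: lamplighter Poisson boundary} directly (which is legitimate, since $H(\mu)<\infty$ and stabilization holds by \cite[Lemma 1.1]{Erschler2011}) identifies the Poisson boundary as $A^B\times\partial B$; the theorem claims it is $A^B$ alone. You assert that "in this regime, the projection to $B$ is Liouville," but that is false and is not a hypothesis of the theorem — $B$ may be a free group, say, with a non-Liouville projected walk. The real work, carried out in Proposition \ref{prop: identification of B with finite first moment} and Lemma \ref{lem: hAB 0}, is to show that the conditional asymptotic entropy of the base walk given the limit lamp configuration vanishes: $h(B,\mu_B\mid A^B)=0$. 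This is where both extra hypotheses enter. The finite first moment gives linear speed, linear growth of $|\supp{\varphi_n}|$, and sublinearly sized jumps; the assumption that $\langle\supp{\mu}\rangle_+$ contains two distinct elements with the same projection to $B$ guarantees (by a resampling argument on a block of $k$ increments) that $X_n$ lies within a bounded distance of $\supp{\varphi_\infty}$ with probability bounded away from zero. Together these confine $X_n$, conditionally on $\varphi_\infty$, to a set $Q_n$ of polynomial size, and a Shannon--McMillan type argument forces the conditional entropy to be zero. Your reading of the two-element hypothesis as a device for "decoupling heavy-tailed lamp increments" misses its actual role: it ensures the walk leaves a non-trivial trace of lamps along its trajectory, so that $\varphi_\infty$ records where the base walk went. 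Example \ref{example: conjugate base group} in the paper shows the theorem fails without it, for reasons unrelated to entropy or tails.
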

The proofs of Theorems \ref{thm: lamplighter Poisson boundary} and \ref{thm: Poisson for finite first moment} use Kaimanovich's conditional entropy criterion \cite[Theorem 2]{Kaimanovich1985conditional}, \cite[Theorem 4.6]{Kaimanovich2000}. We refer to Subsection \ref{subsection: sketch of the proof} for a sketch of our argument. 

The key idea behind the proof of Theorem \ref{thm: Poisson for finite first moment} is that the extra hypothesis of finite first moment allows us to recover the Poisson boundary of the projected random walk on $B$ via the limit lamp configuration (Proposition \ref{prop: identification of B with finite first moment}). Note that the assumption that $\langle \supp{\mu}\rangle_{+}$ contains two distinct elements with the same projection to $B$ is necessary in order to do this, since otherwise the measure $\mu$ could be supported in a conjugate of the base group $B$ and then the limit lamp configuration does not encode information about the trajectory of the random walk in the base group (see Example \ref{example: conjugate base group}). 

It remains an open question whether one can recover the information about the Poisson boundary in the base group from the limit lamp configuration without the assumption of a finite first moment, which would improve Theorem \ref{thm: lamplighter Poisson boundary}.

\begin{question}
	Consider the hypotheses of Theorem \ref{thm: lamplighter Poisson boundary}, and suppose that the projected random walk to $B$ has a non-trivial Poisson boundary. Is the Poisson boundary of $(A\wr B,\mu)$ the space of limit lamp configurations $A^B$ endowed with the corresponding hitting measure?
\end{question}

We finish this subsection with an application of our main theorems to groups other than wreath products. Indeed, Theorem \ref{thm: lamplighter Poisson boundary} does not have a non-degeneracy assumption, and Theorem \ref{thm: Poisson for finite first moment} holds for a large class of possibly degenerate probability measures with a finite first moment, so that both of these results can be applied for random walks supported on proper subgroups of wreath products. In particular, this allows us to identify the Poisson boundary of random walks on free solvable groups, with adequate assumptions on the step distribution of the random walk.

Let $F_d$ be a free group of rank $d\ge 1$, consider $N\lhd F_d$ a normal subgroup and let us denote by $[N,N]$ the subgroup of $F_d$ generated by all commutators $[n_1,n_2]\coloneqq n_1n_2n_1^{-1}n_2^{-1}$ of arbitrary elements $n_1,n_2\in N$. In what follows we consider groups of the form $F_d/[N,N]$. Let us introduce the notation $F_d^{(0)}\coloneqq F_d$ and $F_d^{(k)}\coloneqq\left[F^{(k-1)}_d,F^{(k-1)}_d\right]$ for $k\ge 1$. Then, choosing $N=F_d^{(k)}$ in the above construction gives rise to the group $S_{d,k}\coloneqq F_d/F_d^{(k)}$, which is called the \emph{free solvable group of rank $d$ and class $k$}. Note that every solvable group of class $k$ generated by $d$ elements is a quotient of the group $S_{d,k}$. For $k=1$ one obtains $S_{d,1}=\Z^d$ the free abelian group of rank $d$, and in the case $k=2$ the group $S_{d,2}$ is commonly referred to as the \emph{free metabelian group of rank $d$}.

 The well-known \textit{Magnus embedding} expresses the group $F_d/[N,N]$ as a subgroup of the wreath product $\Z^d\wr F_d/N$. This result was proved by Magnus \cite{Magnus1939}, who described an explicit embedding of the group $F_d/[N,N]$ into a matrix group with coefficients in a module over the group ring $\Z(F_d/N)$, which is isomorphic to $\Z^d\wr F_d/N$. This embedding together with its description via Fox's free differential calculus \cite{FoxI,FoxII,FoxIII,FoxIV,FoxV} have been extensively used in the past decades to study algorithmic and geometric properties of free solvable groups; see e.g.\ \cite{RemeslennikovSokolov1970,DromsLewinServatius1993,Vershik2000freesolvable,MyasnikovRomankovUshakovVershik2010,Vassileva2011,Vassileva2012,Ushakov2014,Sale2015}. 
 
 Let $X\subseteq F_d/N$ be the image of a free generating set of $F_d$ via the canonical quotient epimorphism and let us consider the labeled, directed, Cayley graph $\mathrm{Cay}(F_d/N)$ of the group $F_d/N$ with respect to $X$. Denote by $\mathrm{Edges}(F_d/N)$ the set of labeled and directed edges of $\mathrm{Cay}(F_d/N)$. As explained in \cite{DromsLewinServatius1993,Vershik2000freesolvable,MyasnikovRomankovUshakovVershik2010,SaloffCosteZheng2015}, the Magnus embedding together with a geometric interpretation of Fox derivatives gives a way of associating with every group element $g\in F_d/[N,N]$ a unique finitely supported \emph{flow} $\mathfrak{f}_g:\mathrm{Edges}(F_d/N)\to \Z$. We recall this description in more detail in Subsections \ref{subsection: magnus} and \ref{subsection: flows}. In the context of random walks on groups, this interpretation has been used for the description of the Poisson boundary in the case of free metabelian groups \cite{Erschler2011, LyonsPeres2021}, and for the computation of the asymptotics of the return probability of symmetric finitely supported probability measures on free solvable groups in \cite{SaloffCosteZheng2015}. In particular, it is proved in \cite[Lemma 1.2]{Erschler2011} that if $\mu$ is a probability measure on $F_d/[N,N]$ with a finite first moment and that induces a transient random walk on $F_d/N$, then for almost every sample path $\{w_n\}_{n\ge 0}$ of the $\mu$-random walk on $F_d/[N,N]$, the associated flows $\{\mathfrak{f}_{w_n}\}_{n\ge 0}$ stabilize to a limit function $\mathfrak{f}_{\infty}:\mathrm{Edges}(F_d/N)\to \Z$. Hence, under the previous hypotheses on $\mu$, the space $\Z^{\mathrm{Edges}(F_d/N)}$ endowed with the corresponding hitting measure is a $\mu$-boundary for $F_d/[N,N]$. The space $\Z^{\mathrm{Edges}(\Z^d)}$ is known to be the Poisson boundary of the free metabelian group $S_{d,2}$ of rank $d\ge 5$ for adapted probability measures $\mu$ with a finite third moment \cite[Theorem 2]{Erschler2011}, and for $d\ge 3$ and $\mu$ with a finite second moment \cite[Section 6]{LyonsPeres2021}. 
  The following is the principal result that we obtain in this context.
 
 \begin{cor}\label{cor: general description for general Magnus embedding}
 	Let $N$ be a normal subgroup of a free group $F_d$, and consider a probability measure $\mu$ on $F_d/[N,N]$. 
 	\begin{enumerate}
 		\item Suppose that $H(\mu)<\infty$ and that the flows associated with the $\mu$-random walk on $F_d/[N,N]$ stabilize almost surely. Denote by $(\partial (F_d/N),\nu_{F_d/N})$ the Poisson boundary of the random walk induced by $\mu$ on $F_d/N$. Then the Poisson boundary of $(F_d/[N,N],\mu)$ is the space $\Z^{\mathrm{Edges}(F_d/N)}\times \partial (F_d/N)$, endowed with the corresponding hitting measure. 
 		
 		\item Suppose that $\mu$ has a finite first moment. Assume furthermore that $\langle \supp{\mu}\rangle_{+}$ contains two distinct elements with the same projection to $F_d/N$, and that $\mu$ induces a transient random walk on $F_d/N$. Then the Poisson boundary of $(F_d/[N,N],\mu)$ is the space $\Z^{\mathrm{Edges}(F_d/N)}$, endowed with the corresponding hitting measure.
 	\end{enumerate}

 \end{cor}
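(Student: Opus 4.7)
The plan is to reduce both statements directly to the two main theorems via the Magnus embedding $M : F_d/[N,N] \hookrightarrow \Z^d \wr F_d/N$. First I would let $\bar\mu \coloneqq M_*\mu$ denote the (possibly degenerate) pushforward measure on the wreath product. The key preliminary observation is that $M$ sends the canonical generators of $F_d/[N,N]$ to elements of bounded word length in $\Z^d \wr F_d/N$, so that the word length in the wreath product restricted to $M(F_d/[N,N])$ is comparable to the word length in $F_d/[N,N]$. In particular $H(\bar\mu)=H(\mu)$, any finite moment of $\mu$ transfers to $\bar\mu$, and the projection of $\bar\mu$ to $F_d/N$ agrees with the canonical projection of $\mu$. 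Moreover, from the geometric interpretation of Fox derivatives recalled in Subsections \ref{subsection: magnus}--\ref{subsection: flows}, the lamp configuration of $M(w_n)$ in $\Z^d \wr F_d/N$ corresponds under the natural bijection $(\Z^d)^{F_d/N}\cong \Z^{\mathrm{Edges}(F_d/N)}$ to the flow $\mathfrak{f}_{w_n}$.

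For part (1), stabilization of the flows $\{\mathfrak{f}_{w_n}\}_{n\ge 0}$ is therefore equivalent to stabilization of the lamp configurations of the $\bar\mu$-walk. Since the $\mu$-walk on $F_d/[N,N]$ and the $\bar\mu$-walk on $M(F_d/[N,N])$ have the same tail $\sigma$-algebra, their Poisson boundaries coincide. I would then apply Theorem \ref{thm: lamplighter Poisson boundary} to $(\Z^d \wr F_d/N,\bar\mu)$, noting that this theorem does not require non-degeneracy, to conclude that the Poisson boundary is $(\Z^d)^{F_d/N}\times \partial(F_d/N)$, which translates back to $\Z^{\mathrm{Edges}(F_d/N)}\times \partial(F_d/N)$.

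For part (2), I would verify the three hypotheses of Theorem \ref{thm: Poisson for finite first moment} for $(\Z^d \wr F_d/N, \bar\mu)$. The finite first moment follows from the word-length comparison; the transience of the projection to $F_d/N$ is immediate; and the existence in $\langle \supp{\bar\mu}\rangle_{+}$ of two distinct elements with the same projection to $F_d/N$ is preserved by $M$ since $M$ is injective and intertwines the projections. Applying Theorem \ref{thm: Poisson for finite first moment} and re-identifying limit configurations with limit flows yields the claimed description by $\Z^{\mathrm{Edges}(F_d/N)}$.

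The main step to be careful about is the passage from the random walk on $F_d/[N,N]$ to the (possibly degenerate) random walk on the ambient wreath product: one must explain that the Poisson boundary is invariant under injective group homomorphisms and push-forward of the step distribution, so that the conclusions of Theorems \ref{thm: lamplighter Poisson boundary} and \ref{thm: Poisson for finite first moment} applied to $(\Z^d \wr F_d/N,\bar\mu)$ genuinely identify the Poisson boundary of $(F_d/[N,N],\mu)$. This is precisely the setting that motivates our theorems being stated without any non-degeneracy hypothesis, so once the correspondence between lamp configurations and flows is in hand the remainder of the argument is essentially formal.
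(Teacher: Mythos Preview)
Your proposal is correct and follows essentially the same approach as the paper: push $\mu$ forward through the Magnus embedding, identify flow stabilization with lamp stabilization, and apply Theorems \ref{thm: lamplighter Poisson boundary} and \ref{thm: Poisson for finite first moment} to the resulting (possibly degenerate) walk on $\Z^d\wr F_d/N$. The only cosmetic difference is that where you invoke the general principle that Poisson boundaries are preserved under injective homomorphisms, the paper instead proves an explicit lemma (Lemma \ref{lem: via MAGNUS equal entropies}) showing the relevant conditional entropies coincide and reapplies the conditional entropy criterion.
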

When $N=F^{(k)}_d$ is the $k$-th derived subgroup of $F_d$, Theorem \ref{cor: general description for general Magnus embedding} states that for any probability measure $\mu$ on the free solvable group $S_{d,k}$ such that $H(\mu)<\infty$ and for which flows stabilize almost surely along sample paths of the $\mu$-random walk, the Poisson boundary of $(S_{d,k},\mu)$ is the space $\Z^{\mathrm{Edges}(S_{d,k-1})}\times \partial(S_{d,k-1})$ endowed with the hitting measure. In particular, if $\mu$ has a finite first moment then $H(\mu)<\infty$, and thanks to \cite[Lemma 1.2]{Erschler2011} the flows associated with the $\mu$-random walk stabilize almost surely, so that Theorem \ref{cor: general description for general Magnus embedding} applies. With this, we obtain the description of the Poisson boundary for all random walks on any free solvable group with a step distribution with a finite first moment.
 \begin{cor}\label{cor: description for free solvable group}
 	Let $S_{d,k}$ be the free solvable group of rank $d\ge 2$ and derived length $k\ge 2$, and let $\mu$ be an adapted probability measure on $S_{d,k}$ with a finite first moment.
 	\begin{enumerate}
 		
 		\item  If $d=k=2$, then the Poisson boundary of $(S_{2,2},\mu)$ is non-trivial if and only if the projection of $\mu$ to $\Z^2$ induces a transient random walk In this case, the Poisson boundary is the space $\Z^{\mathrm{Edges}(\Z^2)}$, endowed with the corresponding hitting measure.
 		\item  If $\max\{d,k\}\ge 3$, then the Poisson boundary of $(S_{d,k},\mu)$ is non-trivial and it is the space $\Z^{\mathrm{Edges}(S_{d,k-1})}$, endowed with the corresponding hitting measure.
 	\end{enumerate}
 \end{cor}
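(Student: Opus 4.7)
The plan is to apply Corollary~\ref{cor: general description for general Magnus embedding} with $N = F_d^{(k-1)}$, so that $F_d/N = S_{d,k-1}$ and $F_d/[N,N] = S_{d,k}$, combined with an induction on $k$. Since $\mu$ has a finite first moment on a finitely generated group, $H(\mu) < \infty$; and by \cite[Lemma~1.2]{Erschler2011} the associated flows stabilise almost surely whenever $\mu$ induces a transient random walk on $S_{d,k-1}$, which is the hypothesis of Corollary~\ref{cor: general description for general Magnus embedding}(1).

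First I verify transience of the projected walk $\bar\mu$ on $S_{d,k-1}$ in each sub-case of interest. When $d \ge 3$, the further projection to the abelianisation $\Z^d$ is transient by Chung--Fuchs, and transience lifts to $S_{d,k-1}$. When $d = 2$ and $k \ge 3$, the group $S_{2,k-1}$ has exponential growth (it surjects onto the free metabelian group $S_{2,2}$), so every adapted random walk with finite first moment on it is transient by the standard heat-kernel bounds for groups of exponential growth, applied if necessary after symmetrisation. When $d = k = 2$, the transience of the $\Z^2$-projection is exactly the hypothesis of the transient sub-case of part~(1).

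Given flow stabilisation, Corollary~\ref{cor: general description for general Magnus embedding}(1) identifies the Poisson boundary with $\Z^{\mathrm{Edges}(S_{d,k-1})} \times \partial(S_{d,k-1})$. To collapse this to $\Z^{\mathrm{Edges}(S_{d,k-1})}$, I induct on $k$: the base $S_{d,1} = \Z^d$ is Liouville, so $\partial(\Z^d)$ is trivial; and for $k-1 \ge 2$ the inductive hypothesis realises $\partial(S_{d,k-1})$ as the space $\Z^{\mathrm{Edges}(S_{d,k-2})}$ via the limit flow of the $S_{d,k-1}$-walk. The canonical projection $S_{d,k-1} \to S_{d,k-2}$ induces a pushforward $\pi_*\colon \Z^{\mathrm{Edges}(S_{d,k-1})} \to \Z^{\mathrm{Edges}(S_{d,k-2})}$ sending the limit flow $\mathfrak{f}_\infty$ of the $S_{d,k}$-walk to the limit flow of its projection, so the $\partial(S_{d,k-1})$-coordinate is almost surely a measurable function of the $\Z^{\mathrm{Edges}(S_{d,k-1})}$-coordinate and the two factors collapse as measure spaces. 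Non-triviality in part~(2) is then immediate, since the resulting hitting measure on $\Z^{\mathrm{Edges}(S_{d,k-1})}$ is not a point mass as soon as the walk is transient.

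The remaining case is the recurrent sub-case of part~(1): $d = k = 2$ with the $\Z^2$-projection recurrent. Here flows do not stabilise and Corollary~\ref{cor: general description for general Magnus embedding} is not directly applicable. I would establish triviality via the entropy criterion by showing $h_\mu = 0$. Using the Magnus embedding $S_{2,2} \hookrightarrow \Z^2 \wr \Z^2$ to write $w_n = (\mathfrak{f}_n, X_n)$, one has $H(w_n) \le H(X_n) + H(\mathfrak{f}_n)$; the first term is $o(n)$ since $\Z^2$ is Liouville, and the second is $o(n)$ because a recurrent $\Z^2$-walk with finite first moment typically visits only $O(n/\log n)$ distinct edges, each with net flow of at most logarithmic size. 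I expect the main technical points to be this entropy estimate and the transience statement in the case $d = 2$, $k \ge 3$, the latter requiring care when $\mu$ is only assumed to have a finite first moment.
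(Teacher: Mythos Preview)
Your overall strategy is sound, but in two places you work considerably harder than the paper does, and in the recurrent sub-case your sketch would need substantial additional work to become a proof.

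\textbf{Transient case.} You invoke part~(1) of Corollary~\ref{cor: general description for general Magnus embedding} and then run an induction on~$k$ to collapse the factor $\partial(S_{d,k-1})$ by realising it, via the inductive hypothesis, as a pushforward of the limit flow. This is correct, and the observation that the projection $S_{d,k-1}\to S_{d,k-2}$ pushes the limit flow on $\mathrm{Edges}(S_{d,k-1})$ to the limit flow on $\mathrm{Edges}(S_{d,k-2})$ is exactly right. However, the paper bypasses all of this by applying part~(2) of Corollary~\ref{cor: general description for general Magnus embedding} directly: since $\mu$ is adapted on $S_{d,k}$ and the kernel of $S_{d,k}\to S_{d,k-1}$ is non-trivial, the semigroup $\langle\supp{\mu}\rangle_+$ automatically contains two distinct elements with the same projection, and part~(2) already yields $\Z^{\mathrm{Edges}(S_{d,k-1})}$ as the full boundary with no residual $\partial(S_{d,k-1})$ factor. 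No induction is required. For transience of the projection, the paper simply notes that $S_{d,k-1}$ has at least cubic growth whenever $d\ge 3$ or $k\ge 3$, and cites Varopoulos \cite{Varopoulos1986}; your argument via exponential growth in the case $d=2$, $k\ge 3$ is also fine but slightly less direct.

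\textbf{Recurrent sub-case of part~(1).} Your proposed entropy estimate is the weakest point. The range bound $R_n=O(n/\log n)$ is specific to walks with finite \emph{second} moment; under only a first-moment hypothesis you get $R_n=o(n)$ from Kesten--Spitzer--Whitman, and controlling $H(\mathfrak{f}_n)$ from this alone --- in particular bounding the entropy of the random support of $\mathfrak{f}_n$ --- is not straightforward. The paper's argument is much cleaner: via the Magnus embedding the walk lives in $\Z^2\wr\Z^2$, and since the $\Z^2$-projection is recurrent, Furstenberg's lemma \cite[Lemma~4.2]{Furstenberg1971} identifies the Poisson boundary with that of the induced walk on the normal subgroup $\bigoplus_{\Z^2}\Z^2$, which is abelian and hence Liouville. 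This avoids any quantitative entropy computation.
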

This corollary generalizes the results of \cite[Theorem 2]{Erschler2011} for free metabelian groups $S_{d,2}$ with $d\ge 5$ and $\mu$ with a finite third moment, and of \cite[Section 6]{LyonsPeres2021} for $S_{d,2}$, with $d\ge 3$ and $\mu$ with a finite second moment. Additionally, \cite[Section 6]{LyonsPeres2021} also describes the Poisson boundary for finitely supported and adapted probability measures $\mu$ on groups of the form $F_d/[N,N]$, in the case where $F_d/N$ has at least cubic growth.
 
\subsection{Background}\label{subsection: background}
The interest in considering in Theorem \ref{thm: lamplighter Poisson boundary} a class of infinitely supported probability measures with no extra moment assumptions, in particular with an infinite first moment, comes from the fact that such measures appear naturally in the study of Poisson boundaries. One such occurrence is in the study of groups of intermediate growth. Random walks on groups of intermediate growth with a non-trivial Poisson boundary and a known control of the tail decay of the step distribution have been constructed in \cite[Theorem 2]{Erschler2004}, and more recently in \cite[Theorems A, B and C]{ErschlerZheng2020} where in particular near-optimal lower bounds for the volume growth of Grigorchuk's group are obtained. Such measures must necessarily have an infinite first moment, since the entropy criterion shows that any probability measure with a finite first moment in a group of subexponential growth has a trivial Poisson boundary.  We refer to \cite[Section 2.1]{ErschlerZheng2020} and \cite[Section 2]{Zheng2022} for a more detailed explanation of the relation between random walks with a non-trivial Poisson boundary and estimates for the growth of a group. Additionally, we mention that groups of exponential growth for which every finitely supported measure has a trivial Poisson boundary are constructed in \cite[Theorem A]{BartholdiErschler2017}. It is currently an open problem whether there exist groups of exponential growth for which every probability measure with a finite first moment has a trivial Poisson boundary \cite[Question 1.1]{BartholdiErschler2017}.

Another setting where infinitely supported measures appear naturally is in the study of amenable groups. On the one hand, the amenability of a group is entirely determined by the existence of random walks with non-trivial boundaries. More precisely, every non-degenerate random walk in a non-amenable group has a non-trivial Poisson boundary \cite[Proposition II.1]{Azencott1970} (see also the last two paragraphs of Section 9 in \cite{Furstenberg1973}), and every amenable group $G$ admits a non-degenerate symmetric measure $\mu$ with $\supp{\mu}=G$ and with a trivial Poisson boundary \cite[Theorem 1.10]{Rosenblatt1981}, \cite[Theorem 4.4]{KaimanovcihVershik1983}. On the other hand, the class of amenable groups that admit non-degenerate measures with a non-trivial Poisson boundary coincides with the class of amenable groups that are not hyper-FC-central \cite[Theorem 1]{FrischHartmanTamuzVahidi2019} (the fact that any random walk on a hyper-FC central countable group has a trivial Poisson boundary had previously been proved in \cite{LinZaidenberg1998,Jaworski2004}). We recall that a group is hyper-FC-central if and only if none of its quotients has the infinite conjugacy class (ICC) property. The result of \cite{FrischHartmanTamuzVahidi2019} is further developed in \cite[Theorem A]{ErschlerKaimanovich2023}, where it is proved that any countable ICC group $G$ admits symmetric non-degenerate measures, such that the associated non-trivial Poisson boundary can be described in terms of the convergence to the boundary of a locally finite forest, whose vertex set is $G$. All of the correspondences mentioned above fail if we restrict ourselves to probability measures with a finite first moment. Indeed, the group $\Z/2\Z\wr \Z^3$ is amenable, but every non-degenerate measure with a finite first moment (and more generally, with finite entropy) has a non-trivial Poisson boundary. Additionally, finitely generated groups of intermediate growth provide examples of ICC groups in which every probability measure with a finite first moment has a trivial Poisson boundary.

The known descriptions of the Poisson boundary for non-amenable groups often rely on the group possessing some hyperbolic-like nature. This is the case for free groups \cite{DynkinMaljutov1961,Derrienic1975}, hyperbolic groups \cite{Ancona1987}, \cite[Theorems 7.4 and 7.7]{Kaimanovich2000}, \cite[Theorem 1.1]{ChawlaForghaniFrischTiozzo2022}, and more generally acylindrically hyperbolic groups \cite[Theorem 1.5]{MaherTiozzo2018}, \cite[Theorem 1.2]{ChawlaForghaniFrischTiozzo2022}. These papers cover previously known descriptions for some specific families of groups; see \cite{Woess1989,KaimanovichMasur1996,FarbMasur1998,GauteroMatheus2012,Horbez2016}. Another family of groups in this category are discrete subgroups of semi-simple Lie groups \cite{Furstenberg1971,Ledrappier1985}, \cite[Theorems 10.3 and 10.7]{Kaimanovich2000}.

The most relevant descriptions of (non-trivial) Poisson boundaries for our results are those of wreath products, which provided the first examples of amenable groups that admit symmetric random walks with a non-trivial Poisson boundary \cite[Proposition 6.1]{KaimanovcihVershik1983}. We now make a summary of the known complete descriptions of Poisson boundaries of wreath products.
\begin{itemize}[--]
	\item The first complete description of a non-trivial Poisson boundary of a random walk on a wreath product was by James-Peres \cite[Corollary 1.1]{JamesPeres1996}, who considered some particular degenerate measures on $\Z\wr\Z^d$, $d\ge 1$. In general, one can associate with every random walk on a finitely generated group $B$ a degenerate random walk on $\Z\wr B$ that uses the lamp group $\Z$ to keep track of the number of visits to each point in the base group $B$. Then, the Poisson boundary of this random walk on $\Z\wr B$ is equal to the \emph{exchangeability boundary} of the induced random walk on $B$. James and Peres proved that the Poisson boundary of these specially chosen measures on $\Z\wr \Z^d$ is completely described by the number of visits to points in the base group. We refer to \cite{JamesPeres1996}, \cite[Section 6]{Kaimanovich1991} and \cite[Subsection 4.1]{Erschler2011} for the definition of the exchangeability boundary and more details on this correspondence. James and Peres' description of the Poisson boundary of $\Z\wr\Z^d$ for such degenerate random walks is based on the fact that transient random walks on $\Z^d$ with a finitely supported step distribution have infinitely many cutpoints with probability $1$ \cite[Theorem 1.2]{JamesPeres1996}.
	\item In the case of non-degenerate measures, Kaimanovich proved the following: for any probability measure on $A\wr \Z^d$, $d\ge 1$, with a finite first moment and such that the induced random walk on $\Z^d$ has non-zero drift, the Poisson boundary is equal to the space of limit lamp configurations, endowed with the corresponding hitting measure \cite[Theorem 3.6.6]{Kaimanovich2001}. This result is a consequence of the strip criterion, whose hypotheses are verified by using the fact that the induced random walk on the base group $\Z^d$ has a linear rate of escape. This technique does not work if the induced random walk on $\Z^d$ is centered (which can only occur for $d\ge 3$ in the case of non-trivial Poisson boundaries).
	\item There are other base groups, distinct from $\Z^d$, for which the strip criterion was applied to provide a description of the Poisson boundary for non-degenerate measures with a finite first moment in the corresponding wreath product: this has been done for finite lamp groups $A$  and for $B$ a (non-abelian) free group \cite[Theorem 3.2]{KarlssonWoess2007}, and for $B$ non-elementary hyperbolic or with infinitely many ends \cite[Theorems 4.2 and 4.3]{Sava2010}.
	\item Returning to the case where the base group is $\Z^d$, Erschler proved that for $A$ finitely generated, $d\ge 5$, and for a probability measure $\mu$ on $A\wr \Z^d$ with a finite third moment and that induces a transient random walk on $\Z^d$, the associated Poisson boundary is equal to the space of limit lamp configurations, endowed with the hitting measure \cite[Theorem 1]{Erschler2011}. Her proof consists of using the support of the limit lamp configuration together with cut-balls of the trajectory in the base group to guess which points of it are visited at certain time instants. With this, the identification of the Poisson boundary is obtained by using a version of Kaimanovich's ray criterion.
	\item The latter result was extended by Lyons-Peres to $d\ge 3$, and for measures $\mu$ with a finite second moment and that induce a transient random walk on $\Z^d$ \cite[Theorem 5.1]{LyonsPeres2021}. In their proof, the second moment assumption controls how often lamps are modified very far away from the current position, and an argument with cut-spheres together with Kaimanovich's conditional entropy criterion concludes the proof. In addition, they show that the space of limit lamp configurations describes the Poisson boundary of random walks on groups $A\wr B$ with $A$ finite and $B$ finitely generated, for step distributions with finite entropy, that induce a transient random walk on the base group $B$, and with bounded lamp range (i.e.\ at each step of the random walk, the lamp configuration $\varphi_n$ is modified in a uniformly bounded neighborhood of the current base position $X_n$) \cite[Theorem 1.1]{LyonsPeres2021}. Lyons-Peres also prove an enhanced version of Kaimanovich's conditional entropy criterion \cite[Corollary 2.3]{LyonsPeres2021}, and use it to give a short proof of the description of the Poisson boundary for simple random walks on $A\wr \Z^d$, $d\ge 3$, by using cut-spheres to estimate the values of the lamp configuration at a given instant \cite[Theorem 3.1]{LyonsPeres2021}. In particular, the results of Lyons-Peres cover the description of the Poisson boundary for simple random walks on groups $A\wr B$ where $A$ is finite and $B$ is finitely generated, and recover the results that had been obtained previously in the literature for finitely supported random walks.
\end{itemize}

Our method for proving Theorem \ref{thm: lamplighter Poisson boundary} is different from the results cited previously. Since there are no moment assumptions in our hypotheses, the estimates we obtain for the lamp configuration at some given instant rely uniquely on the fact that stabilization does occur. Indeed, for the (possibly heavy-tailed) measures we consider, there may be frequent modifications to the lamp configuration at distances arbitrarily far away from the base position. For these measures, the method of cut-balls used by Erschler for measures with a finite third moment does not apply, and neither does the technique of cut-spheres used by Lyons-Peres for measures with a finite second moment. We also mention that our proof uses the original version of the conditional entropy criterion of Kaimanovich, and does not require the enhanced version proved by Lyons-Peres. The technique we use to prove our theorems has some commonalities with the \emph{pin-down approximation} used by \cite{ChawlaForghaniFrischTiozzo2022} for (acylindrically) hyperbolic groups. In both arguments, the entropy of the random walk position at some time instant is estimated by conditioning on partitions of the space of trajectories that add a small amount of information.  The core part of the proof of \cite[Theorems 1.1 and 1.2]{ChawlaForghaniFrischTiozzo2022} is estimating the word length of the random walk element at a given instant, and it relies on the fact that a sample path of the random walk on a hyperbolic group can be divided into consecutive segments, each separated from the next one by a \emph{pivot} of the trajectory. The position of the random walk is then deduced from the additional partitions associated with the time instants in which pivots occur, together with the distance between consecutive pivots and the increments from long time intervals in which there were no pivots. In our case, the argument is different. The most important part of our proof corresponds to Steps \prange{step: step 7}{step: step 10} in the sketch below, where the stabilization of lamp configurations allows us to reveal the lamp increments that were done in group elements whose associated lamp configuration has not yet stabilized to its limit value, at a given time instant.

We now sketch the proof of our main theorems.

\subsection{Sketch of the proof of Theorems \ref{thm: lamplighter Poisson boundary} and \ref{thm: Poisson for finite first moment}}\label{subsection: sketch of the proof}
Thanks to Kaimanovich's conditional entropy criterion (Theorem  \ref{thm: conditional entropy alternative formulation}), in order to prove Theorem \ref{thm: lamplighter Poisson boundary} (resp.\ Theorem \ref{thm: Poisson for finite first moment}) it suffices to show that for every $\varepsilon>0$, the mean conditional entropy of the random walk conditioned on hitting boundary points $(\varphi_{\infty},\xi)\in A^B\times \partial B$ (resp. $\varphi_{\infty}\in A^B$) is at most $\varepsilon n+K$, where $K$ is a constant independent of $n$. The idea is that, conditioned on the boundary point $(\varphi_{\infty},\xi)$ (resp.\ $\varphi_{\infty}$), we can recover the position of the random walk at time $n$ by adding information in the form of countable partitions of the space of trajectories, that in average add a small amount of entropy to our process.

For every $\varepsilon>0$, we follow the next steps.
\begin{enumerate}
	\item\label{step: step 1} The first step of the proof is to observe that the boundary point allows us to determine the position in $B$ of the $\mu$-random walk every $t_0$ steps, for $t_0$ large enough, at the cost of adding a small amount of entropy. We save this information in the partition $\mathcal{P}_n^{t_0}$ and call it the \emph{$t_0$-coarse trajectory} (Definition \ref{def: coarse trajectory in base group}). In the case of Theorem \ref{thm: lamplighter Poisson boundary}, the above is done by conditioning on the boundary point $\xi \in \partial B$ (Lemma \ref{lem: coarse traj has small entropy}), whereas for Theorem \ref{thm: Poisson for finite first moment} we do it by conditioning on the limit lamp configuration $\varphi_{\infty}$, and use the additional hypothesis of a finite first moment (Proposition \ref{prop: identification of B with finite first moment}). This is the only place where there is a difference between the proofs of Theorems \ref{thm: lamplighter Poisson boundary} and \ref{thm: Poisson for finite first moment}, and the rest of the sketch below is common to both of them.
	
	\item \label{step: step 2} The partition $\mathcal{P}_n^{t_0}$ contains the position in $B$ in the last instant that is a multiple of $t_0$. Then, getting to the time instant $n$ adds only a constant amount of entropy. Because of this, throughout the rest of the sketch we will assume that $n$ is a multiple of $t_0$, so that we do not have to mention this argument multiple times. 
	
	In what follows, we explain our approach to estimating the entropy of the lamp configuration at time $n$. 
	
	\item \label{step: step 3} We introduce the notion of \emph{$(R,L)$-good elements}, which are the group elements that only do movements in the base group or lamp modifications inside some finite neighborhood $R\subseteq B$ of the current position in $B$, and which modify lamp configurations by a state inside some finite set $L\subseteq A$ (Definition \ref{def: bad elements and bad intervals}). An element is said to be \emph{$(R,L)$-bad} if it is not $(R,L)$-good. Note that if $A$ and $B$ are finitely generated, then so is $A\wr B$, and our definition of an element being $(R,L)$-good is equivalent to having a uniform upper bound (depending on $R$ and $L$) for its word length. 
	
	\item \label{step: step 4} We divide the time interval $[0,n]$ into subintervals $I_j$, $j=1,\ldots, n/t_0$, of length $t_0$ (Definition \ref{def: time intervals}). We say that an interval $I_j$ is \emph{$(R,L)$-bad} if some increment $g_i$, $i\in I_j$, is $(R,L)$-bad (Definition \ref{def: bad elements and bad intervals}). We save the information of the increments done during an $(R,L)$-bad interval in the partition $\beta_n(t_0,R,L)$, and call it the collection of \emph{$(t_0,R,L)$-bad increments} (Definition \ref{defn: define the partition of bad and final intervals.}).
	
	\item \label{step: step 5} We then prove that when $R$ and $L$ are large enough, the partition $\beta_n(t_0,R,L)$ has small entropy (Lemma \ref{lem: bad increments have small entropy}). From this, we can estimate the values of the lamp configuration at instant $n$ for the elements of $B$ as follows.
	
	\item \label{step: step 6} Define the \emph{coarse neighborhood} $\mathcal{N}_n(t_0,R)$ of the trajectory as a neighborhood of $\mathcal{P}_n^{t_0}$ (in the base group $B$) such that any modification to the lamp configuration outside of $\mathcal{N}_n(t_0,R)$ must have occurred during an $(R,L)$-bad interval (Definition \ref{def:neighborhood of coarse traj}). Then the values of the lamp configuration at elements outside $\mathcal{N}_n(t_0,R)$ can be deduced from the $t_0$-coarse trajectory $\mathcal{P}_n^{t_0}$ together with the information of the $(t_0,R,L)$-bad increments $\beta_n(t_0,R,L)$ (Lemma \ref{lem: lamp config outside coarse neighborhood has small entropy}).
	
	\item \label{step: step 7} The most important part of the proof is to prove that the entropy of the lamp configuration inside the coarse neighborhood  $\mathcal{N}_n(t_0,R)$ is low when conditioned on the limit lamp configuration $\varphi_{\infty}$, the $t_0$-coarse trajectory $\mathcal{P}_n^{t_0}$ and the $(t_0,R,L)$-bad increments $\beta_n(t_0,R,L)$ (Proposition \ref{prop: the lamp config INSIDE the neighborhood has small entropy}). Here is where we use the hypothesis that the lamp configuration stabilizes almost surely. This is the objective of Section \ref{section: entropy inside the coarse neighborhood}. In Subsection \ref{subsection: proof of lamps inside for FINITE lamp group} we present a proof of Proposition \ref{prop: the lamp config INSIDE the neighborhood has small entropy} in the case where the lamp group $A$ is finite. The case of infinite lamp groups has some additional difficulties, and the idea of our proof is the following.
	
	\item \label{step: step 8} In expectation, the lamp configuration of most of the elements inside the $t_0$-coarse neighborhood $\mathcal{P}_n^{t_0}$ will be stabilized to the value assigned by the limit lamp configuration $\varphi_{\infty}$ (Lemma \ref{lem: expectation of unstable points is small}). Hence, revealing the ones that are not yet stabilized has low entropy (Lemma \ref{lem: unstable points have small entropy}). Furthermore, if $t_0$ is large enough, then revealing the instants where these increments were applied also has low entropy (Lemma \ref{lem: visit times to unstable points have small entropy}).
	
	\item \label{step: step 9} If these increments were done during an $(R,L)$-bad interval, we already know their value since this information is contained in the partition $\beta_n(t_0,R,L)$. On the other hand, if they were done during an $(R,L)$-good interval, we can bound the possible number of values they can take in terms of the size of $R$ and $L$. With this, we see that the $(R,L)$-good lamp increments done at every position whose lamp configuration has not yet stabilized contain a small amount of entropy (Lemma \ref{lem: increments at unstable points have small entropy}).
	
	\item \label{step: step 10} The lamp configuration inside the coarse neighborhood $\mathcal{N}_n(t_0,R)$ is completely determined by the above increments together with the information of the $(t_0,R,L)$-bad intervals $\beta_n(t_0,R,L)$ and the $t_0$-coarse trajectory $\mathcal{P}_n^{t_0}$. Together with Step (\labelcref{step: step 6}), we conclude that the mean conditional entropy with respect to the limit lamp configuration $\varphi_{\infty}$, conditioned on the $t_0$-coarse trajectory $\mathcal{P}_n^{t_0}$ and the $(t_0,R,L)$-bad increments $\beta_n(t_0,R,L)$ has low entropy (Proposition \ref{prop: partitions pin down the n-th instant}).
\end{enumerate}

\subsection{Organization} In Section \ref{section: preliminaries} we recall some generalities on Poisson boundaries and entropy. Then in Section \ref{section: the coarse trajectory} we give the setup for Theorem \ref{thm: lamplighter Poisson boundary} and prove the entropy estimates of Steps \prange{step: step 1}{step: step 6} for partitions of the space of trajectories up to instant $n$ mentioned in the sketch of the proof above. The most important part of our argument is the entropy estimate described in Steps \prange{step: step 7}{step: step 10} above, and it is presented in Section \ref{section: entropy inside the coarse neighborhood}. The key result of this section is Proposition \ref{prop: the lamp config INSIDE the neighborhood has small entropy}. To prove it, we introduce additional partitions related to the group elements for which the lamp configuration has not stabilized to its limit value by time $n$, and which do not play a role outside this section. Next, in Section \ref{section: proofs of the main theorems} we present the proofs of Theorems \ref{thm: lamplighter Poisson boundary} and \ref{thm: Poisson for finite first moment}, as well as Corollary \ref{cor: finite first moment Zd poisson boundary}. Finally, in Section \ref{section: free solvable groups} we prove Corollaries \ref{cor: general description for general Magnus embedding} and \ref{cor: description for free solvable group} for groups of the form $F_d/[N,N]$ and free solvable groups.

\subsection*{Acknowledgments} The first named author was supported by NSF Grant DMS-2102838. The second named author has received funding from the European Union’s Horizon 2020 research and innovation programme under the Marie Sk\l{}odowska-Curie grant agreement N\textsuperscript{\underline{o}} 945322, and from the European Research Council (ERC) under the European Union’s Horizon 2020 research and innovation program (grant agreement N\textsuperscript{\underline{o}} 725773). We would like to thank Anna Erschler for bringing this problem into our attention, for many helpful discussions and for her comments and suggestions. We thank Russel Lyons for his careful reading and helpful comments on the first versions of this paper. We would also like to thank Vadim Kaimanovich for helpful discussions around the conditional entropy criterion. We are grateful to the anonymous referee for their useful comments which improved the exposition of the paper.
\section{Preliminaries}\label{section: preliminaries}
\subsection{Wreath products and lamplighter groups}
For $A,B$ groups, we define their \textit{wreath product} $A\wr B$ as the semidirect product $\bigoplus_{B} A \rtimes B$, where $\bigoplus_{B} A $ is the group of finitely supported functions $f:B\to A$ endowed  with the operation $\oplus$ of componentwise multiplication. We denote by $\supp{f}$ the finite subset of $B$ to which $f$ assigns non-trivial values. Here, the group $B$ acts on the direct sum $\bigoplus_{B} A $ from the left by translations. That is, for $f:B\to A$, and any $b\in B$ we have $(b\cdot f)(x)=f(b^{-1}x), \ x\in B.$ Elements of $A\wr B$ can be expressed as a tuple $(f,b)$, where $f\in \bigoplus_{B} A$ and $b\in B$. The product between two elements elements $(f,b)$, $(f^{\prime},b^{\prime})\in A\wr B$ is given by $(f,b)\cdot(f^{\prime},b^{\prime})= (f\oplus (b\cdot f^{\prime}),bb^{\prime}).$ There is a natural embedding of $B$ into $A\wr B$ via $B\to A\wr B,\ b\mapsto (\mathds{1},b)$, where $\mathds{1}(x)=e_A$ for any $x\in B$. Similarly, we can embed $A$ into $A\wr B$ via $A\to A\wr B, a\mapsto (\delta^{a}_{e_B},e_B)$, where $\delta^{a}_{e_B}(e_B)=a$ and $\delta^{a}_{e_B}(x)=e_A$ for any $x\neq e_B$. If $A$ and $B$ are generated by finite sets $S_A$ and $S_B$, their copies inside $A\wr B$ through the above embeddings generate $A\wr B$.

Wreath products of the form $\Z/2\Z\wr B$ are also called ``lamplighter groups'', due to the following interpretation: one pictures the Cayley graph $\cay{B}{S_B}$ as a street with lamps at every element, each of which can be in two different states given by the elements of $\Z/2\Z$. An element $g=(f,b)\in \Z/2\Z\wr B$ consists of a \emph{lamp configuration} $f\in \bigoplus_{B} \Z/2\Z$, which encodes the state of the lamp at each element of $B$, together with a position $b\in B$, which corresponds to a person standing next to a particular lamp. Multiplying on the right by elements of $\Z/2\Z$ corresponds to switching the state of the lamp at position $b$, whereas multiplying on the right by an element of $B$ changes the position of the person without modifying the lamp configuration.

\subsection{Random walks and the Poisson boundary}\label{subsection: rw and poisson boundary}
Let $G$ be a countable group and let $\mu$ be a probability measure on $G$. The \emph{$\mu$-random walk} on $G$ is the Markov chain $\{w_n\}_{n\ge 0}$ defined by $w_0=e_G$, and for $n\ge 1$, $w_n=g_1g_2\cdots g_n,$ where $\{g_i\}_{i\ge 1}$ is a sequence of independent, identically distributed, random variables on $G$ with law $\mu$. The law $\P$ of the process $\{w_n\}_{n\ge 1}$ is defined as the push-forward of the Bernoulli measure $\mu^{\mathbb{N}}$ through the map
\begin{equation*}
	\begin{aligned}
		G^{\mathbb{N}}&\to G^{\mathbb{N}}\\
		(g_1,g_2,g_3,\ldots)&\mapsto (w_1,w_2,w_3,\ldots)\coloneqq (g_1,g_1g_2,g_1g_2g_3,\ldots).
	\end{aligned}
\end{equation*}
The space $(G^{\mathbb{N}},\P)$ is called the \emph{path space} or the \emph{space of trajectories} of the $\mu$-random walk. The Poisson boundary of $(G,\mu)$ is a probability measure space $(B,\nu)$ that is endowed with a measurable action of $G$ and a $G$-equivariant map $\mathbf{bnd}:(G^{\mathbb{N}},\P)\to (B,\nu)$ such that
\begin{enumerate}
	\item\label{cond2: Poisson bdry}  $\mathbf{bnd}\circ T=\mathbf{bnd}$, where $T:G^{\mathbb{N}}\to G^{\mathbb{N}}$ is the \emph{shift map on the space of trajectories} defined by $T(w_1,w_2,w_3,\ldots)\coloneqq (w_2,w_3,\ldots)$ for $(w_1,w_2,\ldots)\in G^{\mathbb{N}}$, and
	\item \label{cond3: Poisson bdry} $\nu=\mathbf{bnd}_{*}\P$.
\end{enumerate} These conditions imply that $\nu$ is a \emph{$\mu$-stationary} measure, meaning that it satisfies the equation $\nu=\mu*\nu\coloneqq\sum_{g\in G}\mu(g)g_{*}\nu$. In general, any probability $G$-space $(X,\lambda)$ endowed with a boundary map $(G^{\mathbb{N}},\P)\to (X,\lambda)$ that satisfies Conditions \eqref{cond2: Poisson bdry} and \eqref{cond3: Poisson bdry} is called a \emph{$\mu$-boundary} of $G$. The Poisson boundary is unique up to a $G$-equivariant isomorphism and it is maximal, in the sense that the boundary map $(G^{\mathbb{N}},\P)\to (X,\lambda)$ of any $\mu$-boundary $(X,\lambda)$ factors through $(B,\nu)$. 

In this paper we will work with Definition \ref{defn: Poisson boundary original def} for the Poisson boundary. Alternatively, it can also be defined as the space of ergodic components of the shift map $T:G^{\mathbb{N}}\to G^{\mathbb{N}}$ on the space of trajectories, where $T(w_1,w_2,w_3,\ldots)\coloneqq (w_2,w_3,\ldots)$ for $(w_1,w_2,\ldots)\in G^{\mathbb{N}}$. For further equivalent definitions the Poisson boundary, we refer to \cite{KaimanovcihVershik1983}, \cite[Section 1]{Kaimanovich2000} and the references therein. We also refer to the surveys \cite{Furman2002,Erschler2010} for an overview of the study of Poisson boundaries and to the survey \cite{Zheng2022} for more recent applications of random walks and Poisson boundaries to group theory.

\subsection{Entropy}\label{subsection: entropy}

The entropy of a probability measure $\mu$ on the group $G$ is defined as $H(\mu)\coloneqq -\sum_{g\in G}\mu(g)\log(\mu(g))$. More generally, let $\rho=\{\rho_k\}_{k\ge 1}$ be a countable partition of the space of sample paths $G^{\mathbb{N}}$. The entropy of $\rho$ with respect to the measure $\P$ is defined as $
	H(\rho)\coloneqq -\sum_{k\ge 1}\P(\rho_k)\log \P(\rho_k).$

The following sequence of partitions will appear in the formulation of Theorem \ref{thm: conditional entropy criterion - Kaimanovich}.

\begin{defn}\label{def: partition rw at time n}
	For every $n\ge 1$, define the partition $\alpha_n$ of the space of sample paths $G^{\mathbb{N}}$, where two trajectories $\mathbf{w},\mathbf{w^\prime}\in G^{\mathbb{N}}$ belong to the same element of $\alpha_n$ if and only if $w_n=w^\prime_n$.
\end{defn} 

The \emph{entropy criterion} of Derriennic \cite{Derrienic1980} and Kaimanovich-Vershik \cite{KaimanovcihVershik1983} states that if $\mu$ is a probability measure on $G$ with $H(\mu)<\infty$, then the Poisson boundary of $(G,\mu)$ is trivial if and only if $h_{\mu}=0.$ In what follows we recall the basic definitions of the conditional probability measures and conditional entropy associated with a $\mu$-boundary, and then state the \emph{conditional entropy criterion} of Kaimanovich below, that characterizes whether a $\mu$-boundary is the Poisson boundary.

\subsection{The conditional entropy criterion}

Let $\mathbf{X}=(X,\lambda)$ be a $\mu$-boundary of $G$. Using Rokhlin's theory of measurable partitions of Lebesgue spaces, the probability measure $\P$ can be disintegrated with respect to the map $\mathbf{bnd}:(G^{\mathbb{N}},\P)\to (X,\lambda)$. That is, for $\lambda$-a.e.\ $\xi \in X$,  there exists the conditional probability measure $\P^{\xi}$ on $G^{\mathbb{N}}$ that is supported on the fiber $\mathbf{bnd}^{-1}(\{\xi\})$, and it holds that 	$\P=\int_{X}\P^{\xi}\ d\lambda (\xi)$; see \cite[Section I.7]{Rohlin1967} and \cite[Section 3]{Kaimanovich2000}.

\begin{defn}\label{defn: average conditional entropy}
	Consider $\rho=\{\rho_k\}_{k\ge 1}$ a countable partition of the space of sample paths $G^{\mathbb{N}}$ and let $\mathbf{X}=(X,\lambda)$ be a $\mu$-boundary of $G$. For $\lambda$-a.e.\ $\xi\in X$, we define the \emph{conditional entropy of $\rho$ given $\xi$} as
	\begin{equation*}
		H_{\xi}(\rho)\coloneqq-\sum_{k\ge 1}\P^{\xi}(\rho_k)\log \P^{\xi}(\rho_k).
	\end{equation*}
\end{defn}

We now state Kaimanovich's conditional entropy criterion, which is one of the main tools to determine whether a given $\mu$-boundary is equal to the Poisson boundary of the $\mu$-random walk on $G$.

\begin{thm}[{\cite[Theorem 2]{Kaimanovich1985conditional}, \cite[Theorem 4.6]{Kaimanovich2000}}] \label{thm: conditional entropy criterion - Kaimanovich}
	Let $G$ be a countable group, and let $\mu$ be a probability measure on $G$ with $H(\mu)<\infty$. Consider a $\mu$-boundary $\mathbf{X}=(X,\lambda)$ of $G$. Then $\mathbf{X}$ is the Poisson boundary of $(G,\mu)$ if and only if for $\lambda$-almost every $\xi \in X$ we have $\lim_{n\to \infty}\frac{H_{\xi}(\alpha_n)}{n}=0.$
	
\end{thm}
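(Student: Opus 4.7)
The plan is to reduce the criterion to the (unconditional) entropy criterion of Derriennic--Kaimanovich--Vershik by establishing a chain rule for the asymptotic entropy along the quotient $\pi\colon (G^{\mathbb{N}},\P)\to (X,\lambda)$. First I would show that the limit $h_{\mu}^{\pi}\coloneqq \lim_n H_{\xi}(\alpha_n)/n$ exists for $\lambda$-a.e.\ $\xi$ and is $\lambda$-a.s.\ equal to a constant in $[0,h_{\mu}]$. The existence of the limit follows by combining subadditivity-type inequalities of the form $H_{\xi}(\alpha_{n+m})\le H_{\xi}(\alpha_n)+H(\mu^{*m})$ (derived from the Markov structure of the walk conditional on the boundary point, and the fact that the increment from time $n$ to $n+m$ has law $\mu^{*m}$ and is independent of the past) with a Fekete-type averaging over $\lambda$, while the constancy and the upgrade to almost-everywhere convergence uses the ergodicity of the $\mu$-stationary action on $(X,\lambda)$. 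Consequently, the theorem reduces to the claim that $X$ is the Poisson boundary if and only if $h_{\mu}^{\pi}=0$.

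\textbf{Chain rule and the equivalence.} From the identity $H(\alpha_n)=H(\alpha_n\mid \pi)+I(\alpha_n;\pi)$, after division by $n$ and passage to the limit, one obtains a decomposition $h_{\mu}=h_{\mu}^{\pi}+h_{\mu}^X$, where $h_{\mu}^X\coloneqq\lim_n I(\alpha_n;\pi)/n$ plays the role of the entropy rate captured by $X$. Using that the conditional law of $\pi$ given $w_n=g$ equals $g_{*}\lambda$ (by $\mu$-stationarity of $\lambda$), I would identify $h_{\mu}^X$ with the asymptotic entropy of the projected process on $X$. For the "only if" direction, assume $X$ equals the Poisson boundary. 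Then the conditional measures $\P^{\xi}$ describe fiber processes with trivial residual boundary, and an application of the classical entropy criterion (adapted to the fiber processes, where bounded $\mu$-harmonic functions are $\lambda$-a.s.\ constant on $\pi^{-1}(\xi)$) yields $h_{\mu}^{\pi}=0$. For the converse, assume $h_{\mu}^{\pi}=0$, so that all asymptotic entropy is already captured by $\pi$. Factoring $\pi$ through the canonical boundary map $\pi_B\colon G^{\mathbb{N}}\to B$ to the Poisson boundary via a quotient $q\colon B\to X$, the chain rule applied to $\pi_B$ together with $h_{\mu}^X=h_{\mu}$ forces the conditional entropy rate of $\pi_B$ relative to $\pi$ to vanish, which one then uses to conclude that $q$ is a measure-theoretic isomorphism, so $X=B$.

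\textbf{Main obstacle.} The most delicate step is this final implication in the "if" direction, passing from vanishing relative entropy rate to measure-theoretic equality $X=B$. A non-trivial factor map $q\colon B\to X$ would, after Rokhlin disintegration, produce fiber random walks along $q^{-1}(\xi)$ that still carry non-trivial boundary behavior inherited from $B$, and hence strictly positive conditional entropy rate, contradicting $h_{\mu}^{\pi}=0$; making this rigorous requires combining the disintegration theorem with the extremality property of the Poisson boundary as the maximal $\mu$-boundary, and ruling out scenarios where the entropy is concentrated on a $\lambda$-null set of fibers. The subsidiary issue of proving almost-sure existence and constancy of $\lim_n H_{\xi}(\alpha_n)/n$ can be handled by first establishing $L^1(\lambda)$-convergence via Fekete and Fatou and then upgrading to almost-everywhere convergence through a martingale argument built on the tail $\sigma$-algebra of the path space.
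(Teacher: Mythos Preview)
The paper does not give its own proof of this theorem: it is quoted from the literature (Kaimanovich's original works \cite{Kaimanovich1985conditional,Kaimanovich2000}) and used as a black box, so there is no argument in the paper to compare your proposal against.

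That said, your outline has the correct architecture---the chain rule $h_\mu=h_\mu^{\pi}+h_\mu^{X}$ and the reduction to the unconditional entropy criterion is exactly how Kaimanovich proceeds---but one step as written is incorrect. You justify the inequality $H_{\xi}(\alpha_{n+m})\le H_{\xi}(\alpha_n)+H(\mu^{*m})$ by asserting that ``the increment from time $n$ to $n+m$ has law $\mu^{*m}$ and is independent of the past'' under $\P^{\xi}$. This is false: the conditional process $\{w_n\}$ under $\P^{\xi}$ is a Doob $h$-transform of the original walk, so increments are neither i.i.d.\ nor distributed as $\mu$. What \emph{is} true is the averaged inequality $\int H_{\xi}(\alpha_{n+m}\mid\alpha_n)\,d\lambda(\xi)=H(\alpha_{n+m}\mid \alpha_n\vee \eta)\le H(\alpha_{n+m}\mid \alpha_n)=H(\mu^{*m})$, where $\eta$ is the measurable partition corresponding to $\mathbf{X}$; this gives subadditivity of $n\mapsto H_{\mathbf{X}}(\alpha_n)$ and hence existence of $\lim_n H_{\mathbf{X}}(\alpha_n)/n$. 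Upgrading from this $L^1$ statement to the $\lambda$-a.e.\ convergence of $H_{\xi}(\alpha_n)/n$ requires a genuine ergodic-theoretic argument (this is where Kaimanovich invokes the Shannon--McMillan--Breiman type theorem for the conditional process, or equivalently works with $-\frac{1}{n}\log p_n^{\xi}(w_n)$ via Kingman/martingale convergence), not pointwise subadditivity. Your closing paragraph gestures at this but underestimates the work involved; in particular, ergodicity of the $G$-action on $(X,\lambda)$ does not by itself yield constancy of $\xi\mapsto\lim_n H_{\xi}(\alpha_n)/n$, since this function is not obviously $G$-invariant.
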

Here $\alpha_n$ is the partition associated with the value of the $\mu$-random walk at instant $n$ (Definition \ref{def: partition rw at time n}). 

Following \cite[Section 5.1]{Rohlin1967}, let us define the \emph{mean conditional entropy over the $\mu$-boundary $\mathbf{X}$} as
\begin{equation*}
	H_{\mathbf{X}}(\rho)\coloneqq \int_{X}H_{\xi}(\rho)\ d\lambda(\xi).
\end{equation*}

The mean conditional entropy with respect to the Poisson boundary of $(G,\mu)$ has been considered in the proof of the entropy criterion for the triviality of the Poisson boundary in \cite{KaimanovcihVershik1983} (see Equation (12) at the bottom of page 464). For arbitrary $\mu$-boundaries, the mean conditional entropy is considered in \cite{Kaimanovich1985conditional} in the proof of the conditional entropy criterion (see the paragraph preceding Theorem 2 in page 194).

Another way of stating Kaimanovich's conditional entropy criterion (Theorem \ref{thm: conditional entropy criterion - Kaimanovich}) is the following, which uses the mean conditional entropy.

\begin{thm}\label{thm: conditional entropy alternative formulation}
	Let $G$ be a countable group, and let $\mu$ be a probability measure on $G$ with $H(\mu)<\infty$. Consider a $\mu$-boundary $\mathbf{X}=(X,\lambda)$ of $G$. Then $\mathbf{X}$ is the Poisson boundary of $(G,\mu)$ if and only if $
		\lim_{n\to \infty}\frac{H_{\mathbf{X}}(\alpha_n)}{n}=0.$
\end{thm}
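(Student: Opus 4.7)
The plan is to deduce Theorem \ref{thm: conditional entropy alternative formulation} from Theorem \ref{thm: conditional entropy criterion - Kaimanovich} by exhibiting a common non-negative constant $c = c(\mathbf{X})$ with the property that
\[
\lim_n \frac{H_\xi(\alpha_n)}{n} = c \quad \text{for $\lambda$-a.e.\ } \xi \in X \qquad \text{and} \qquad \lim_n \frac{H_{\mathbf{X}}(\alpha_n)}{n} = c.
\]
Once such a $c$ is identified, the two criteria both read $c = 0$, and the claimed equivalence follows at once.

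First I would establish the $\lambda$-a.e.\ existence and $\lambda$-a.e.\ constancy of $\lim_n H_\xi(\alpha_n)/n$. The natural object to introduce is the conditional information process $\mathbf{I}_n(\xi,\mathbf{w}) \coloneqq -\log\P^{\xi}(w_n = w_n(\mathbf{w}))$, whose $\P^{\xi}$-expectation is precisely $H_\xi(\alpha_n)$. A martingale / Shannon--McMillan--Breiman type argument -- already built into Kaimanovich's proof of Theorem \ref{thm: conditional entropy criterion - Kaimanovich} in \cite{Kaimanovich1985conditional,Kaimanovich2000} -- yields both $(\lambda\otimes\P)$-almost sure and $L^1$ convergence of $\mathbf{I}_n/n$. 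The $G$-equivariance of the boundary map combined with the $\mu$-stationarity of $\lambda$ forces the limit to be shift-invariant on the path space $(G^{\mathbb{N}},\P)$, and ergodicity of the shift on $(G^{\mathbb{N}},\P)$ then forces it to equal a constant $c \geq 0$ almost everywhere. Integrating out $\mathbf{w}$ under $\P^\xi$ in the a.e.\ convergence delivers $H_\xi(\alpha_n)/n \to c$ for $\lambda$-a.e.\ $\xi$.

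The remaining point is to pass from pointwise convergence of $f_n(\xi) \coloneqq H_\xi(\alpha_n)/n$ to convergence of its $\lambda$-average $H_{\mathbf{X}}(\alpha_n)/n$ to the same value $c$. Fatou's lemma applied to the non-negative functions $f_n$ gives $c \leq \liminf_n H_{\mathbf{X}}(\alpha_n)/n$ directly. For the matching upper bound $\limsup_n H_{\mathbf{X}}(\alpha_n)/n \leq c$, I would invoke uniform integrability of $\{f_n\}_{n\geq 1}$ on $(X,\lambda)$: the uniform $L^1$-bound $\int f_n\,d\lambda \leq H(\alpha_n)/n \leq H(\mu)$ is automatic from $H(\mu)<\infty$ and the subadditivity $H(\alpha_{n+m}) \leq H(\alpha_n) + H(\alpha_m)$, while the required tightness is precisely what the $L^1$-convergence of $\mathbf{I}_n/n$ from the previous step provides after integrating out the trajectory variable. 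Vitali's convergence theorem then yields $\int f_n\,d\lambda \to c$, as desired.

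The main obstacle, and the genuine technical content behind both forms of the criterion, is the joint almost-sure and $L^1$ convergence of the information process $\mathbf{I}_n/n$; everything else -- the constancy via ergodicity, and the passage from pointwise to mean convergence via Fatou and Vitali -- is then routine. I expect to rely directly on the Shannon--McMillan--Breiman / martingale machinery of Kaimanovich rather than to reprove it from scratch, so that the novelty of Theorem \ref{thm: conditional entropy alternative formulation} relative to Theorem \ref{thm: conditional entropy criterion - Kaimanovich} lies essentially in observing that the constant $c$ governing both versions is the same.
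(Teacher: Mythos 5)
Your overall route is the same as the paper's: deduce Theorem \ref{thm: conditional entropy alternative formulation} from Theorem \ref{thm: conditional entropy criterion - Kaimanovich} by comparing the pointwise quantities $H_{\xi}(\alpha_n)$ with their $\lambda$-average $H_{\mathbf{X}}(\alpha_n)=\int_X H_{\xi}(\alpha_n)\,d\lambda(\xi)$, importing the analytic substance (almost sure and $L^1$ convergence of the conditional information $\mathbf{I}_n/n$) from Kaimanovich's machinery. The paper itself only gestures at this, pointing to Equation (20) of Kaimanovich--Vershik and to \cite[Theorem 2.17]{KaimanovichSobieczky2012} for a complete argument, so your sketch is a legitimate fleshing-out of the intended proof. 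Your Fatou direction is correct, and your observation that $L^1$-convergence of $\mathbf{I}_n/n$ controls the means is also correct --- in fact, once you have $L^1$-convergence you can drop Fatou and Vitali altogether and simply integrate the $L^1$ bound against $d\lambda(\xi)\,d\P^{\xi}(\mathbf{w})$.

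One step as written is wrong, although it is repairable. The shift on $(G^{\mathbb{N}},\P)$ is \emph{not} ergodic unless the Poisson boundary is trivial: by Definition \ref{defn: Poisson boundary original def} (and the alternative description in Subsection \ref{subsection: rw and poisson boundary}), the Poisson boundary \emph{is} the space of ergodic components of the shift, so in the only interesting case a shift-invariant function need merely be measurable with respect to the Poisson boundary, not constant. Invoking "ergodicity of the shift" to get constancy of the limit is therefore circular. The correct route to a.e.\ constancy of $c(\xi)\coloneqq\lim_n H_{\xi}(\alpha_n)/n$ is to apply Kingman's theorem to the conditional chain for each fixed $\xi$ (giving a constant depending on $\xi$), then observe via the cocycle identity for the conditional measures $\P^{\xi}$ that $c$ is an essentially $G$-invariant function on $(X,\lambda)$, and finally use the ergodicity of the $G$-action on the $\mu$-boundary $(X,\lambda)$. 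Alternatively, note that constancy is not actually needed for the equivalence: for the non-negative functions $f_n(\xi)=H_{\xi}(\alpha_n)/n$, Fatou alone gives that vanishing of the mean limit forces the a.e.\ limit to vanish, and $L^1$-convergence (equivalently, uniform integrability of $\{f_n\}$) alone gives the converse.
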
\
The mean conditional entropy can be obtained by taking the logarithm of the transition probabilities of the conditional random walk, and then integrating over the path space (see Equation (20) on page 462 of \cite{KaimanovcihVershik1983}). Thus, the formulation of the conditional entropy criterion in Theorem \ref{thm: conditional entropy alternative formulation} follows from Theorem \ref{thm: conditional entropy criterion - Kaimanovich}. We also mention that Theorem \ref{thm: conditional entropy alternative formulation} is a particular case of \cite[Theorem 2.17]{KaimanovichSobieczky2012}, where the result is proved in a more general setting for random walks along classes of graphed equivalence relations. For random walks on groups, the equivalence relation that one considers is the one induced by identifying sample paths of $G^{\mathbb{N}}$ which are mapped to the same boundary point $\xi \in X$. We thank Anna Erschler and Vadim Kaimanovich for pointing out to us the paper \cite{Kaimanovich1985conditional}, and for many explanations about the results that appear on it.

In the proofs of our main results (Theorems \ref{thm: lamplighter Poisson boundary} and \ref{thm: Poisson for finite first moment}) we will use the formulation of Kaimanovich's conditional entropy criterion in terms of the mean conditional entropy (Theorem \ref{thm: conditional entropy alternative formulation}). 

\subsection{Properties of entropy}
It will be important for our proofs that the mean conditional entropy of a partition over a $\mu$-boundary is at most the entropy of the partition. We state this in the following lemma.
\begin{lem}\label{lem: conditional entropy is at most the usual entropy}
	Consider a countable partition $\rho$ of the space of sample paths $G^{\mathbb{N}}$. Let $\mathbf{X}$ be a $\mu$-boundary of $G$ and $\mathbf{Y}$ a quotient of $\mathbf{X}$ with respect to a $G$-equivariant measurable partition. Then it holds that $H_{\mathbf{X}}(\rho) \leq H_{\mathbf{Y}}(\rho)$.
	
	In particular for every countable partition $\rho$ and each $\mu$-boundary $\mathbf{X}$, we have that $H_{\mathbf{X}}(\rho) \leq H(\rho)$.
\end{lem}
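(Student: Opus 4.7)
The plan is to prove the first inequality by Jensen's inequality applied to the concave function $\phi(x) = -x\log x$, combined with the transitivity of conditional measure disintegration. The second statement will then follow by taking $\mathbf{Y}$ to be the trivial one-point boundary.

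First I would set up the disintegration. Since $\mathbf{Y}$ is a quotient of $\mathbf{X}$ by a $G$-equivariant measurable partition, let $\pi \colon X \to Y$ denote the associated factor map. Rokhlin's theory provides conditional measures $\{\lambda^{\eta}\}_{\eta \in Y}$ on $X$ with $\lambda = \int_Y \lambda^\eta\, d\lambda_Y(\eta)$, and by transitivity of disintegration the conditional probabilities on the path space satisfy
\begin{equation*}
\P^{\eta} \;=\; \int_X \P^{\xi}\, d\lambda^{\eta}(\xi) \qquad \text{for $\lambda_Y$-a.e.\ $\eta \in Y$}.
\end{equation*}
Evaluating on each atom $\rho_k$ of the partition $\rho$ gives $\P^{\eta}(\rho_k) = \int_X \P^{\xi}(\rho_k)\, d\lambda^{\eta}(\xi)$.

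Next I would apply Jensen's inequality. The function $\phi(x) = -x\log x$ is concave on $[0,1]$, so for each $k$ and $\lambda_Y$-a.e.\ $\eta$,
\begin{equation*}
\phi\!\left(\P^{\eta}(\rho_k)\right) \;=\; \phi\!\left(\int_X \P^{\xi}(\rho_k)\, d\lambda^{\eta}(\xi)\right) \;\geq\; \int_X \phi\!\left(\P^{\xi}(\rho_k)\right)\, d\lambda^{\eta}(\xi).
\end{equation*}
Summing over $k$ (using Tonelli to interchange sum and integral, which is justified since all terms are non-negative) yields
\begin{equation*}
H_{\eta}(\rho) \;\geq\; \int_X H_{\xi}(\rho)\, d\lambda^{\eta}(\xi).
\end{equation*}
Finally, integrating this inequality against $\lambda_Y$ over $Y$ and using $\lambda = \int_Y \lambda^\eta\, d\lambda_Y(\eta)$ gives
\begin{equation*}
H_{\mathbf{Y}}(\rho) \;=\; \int_Y H_{\eta}(\rho)\, d\lambda_Y(\eta) \;\geq\; \int_Y \int_X H_{\xi}(\rho)\, d\lambda^{\eta}(\xi)\, d\lambda_Y(\eta) \;=\; \int_X H_{\xi}(\rho)\, d\lambda(\xi) \;=\; H_{\mathbf{X}}(\rho),
\end{equation*}
which is the desired inequality.

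For the second statement, I would take $\mathbf{Y}$ to be the trivial $\mu$-boundary (a single point with the Dirac measure), which is a quotient of every $\mu$-boundary. In this case $\P^{\eta} = \P$ for the unique point $\eta$, so $H_{\mathbf{Y}}(\rho) = H(\rho)$, and the general inequality gives $H_{\mathbf{X}}(\rho) \leq H(\rho)$. There is no serious obstacle here; the only subtle point is ensuring the disintegration identity $\P^{\eta} = \int \P^{\xi}\, d\lambda^{\eta}$ holds on the Lebesgue spaces in question, but this is the standard transitivity property of Rokhlin disintegration applied to the tower $(G^{\mathbb{N}},\P) \to (X,\lambda) \to (Y,\lambda_Y)$.
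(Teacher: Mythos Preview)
Your proof is correct and follows essentially the same approach as the paper, which simply cites the concavity of $x\mapsto -x\log x$ together with Jensen's inequality (referring to \cite[Section 5.10]{Rohlin1967}). You have supplied the details of the Rokhlin disintegration argument that the paper leaves implicit.
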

This follows from the fact that the function $x\mapsto -x\log(x)$ is concave on $[0,1]$, together with Jensen's inequality and the fact that conditional entropies are non-negative (see \cite[Section 5.10]{Rohlin1967}). 

We finish this section by introducing the notion of conditional entropy with respect to a partition. Consider two countable partitions $\rho,\gamma$ of the path space $G^{\mathbb{N}}$. Let $(X,\lambda)$ be a $\mu$-boundary of $G$. For every $\xi \in X$, denote 
\begin{equation*}
	\P^{\xi}(P\mid Q)\coloneqq \frac{\P^{\xi}(P\cap Q)}{P^{\xi}(Q)}, \text{ for }P\in \rho\text{ and }Q\in \gamma \text{ with }P^{\xi}(Q)\neq 0.
\end{equation*}

We define the associated \emph{conditional entropy of $\rho$ with respect to $\gamma$ conditioned on $\xi$} as
\begin{equation*}
	H_{\xi}(\rho\mid \gamma)\coloneqq -\sum_{P\in \rho, Q\in \gamma}\P^{\xi}(P\cap Q)\log \P^{\xi}(P\mid Q).
\end{equation*}
Define also 
\begin{equation}\label{eq: mean conditional entropy on boundary and partition}
	H_{\mathbf{X}}(\rho\mid \gamma)\coloneqq \int_{X}H_{\xi}(\rho\mid \gamma)\ d\lambda (\xi).
\end{equation}
We have the equality $H_{\mathbf{X}}(\rho\mid \gamma)=H_{\mathbf{X}}(\rho\vee \gamma)-H_{\mathbf{X}}(\gamma)$, where $\rho \vee \gamma \coloneqq \{P\cap Q\mid P\in \rho, Q\in \gamma \}$ is the \emph{join} of the partitions $\rho$ and $\gamma$.

\begin{rem}
	Rokhlin's theory of measurable partitions \cite{Rohlin1967} implies that for every $\mu$-boundary $\mathbf{X}$ of $G$, there is an associated measurable partition $\eta$ of the space of sample paths $G^{\mathbb{N}}$. With this, one can express the mean conditional entropy of a countable partition $\rho$
	over the $\mu$-boundary $\mathbf{X}$ as $H_{\mathbf{X}}(\rho)=H(\rho \mid \eta)$. Similarly, for two countable partitions $\rho, \gamma$ of the space of sample paths, we have $H_{\mathbf{X}}(\rho\mid \gamma)=H(\rho \mid \gamma \vee \eta)$. This is the notation used in \cite{KaimanovcihVershik1983,Kaimanovich1985conditional} when the mean conditional entropy is discussed.
	
	Throughout this paper we will use the notation $H_{\mathbf{X}}(\rho\mid \gamma)$ for the mean conditional entropy, in order to emphasize the dependence on the $\mu$-boundary that is being considered.
\end{rem}

In the proofs of our results we will use the following basic facts about entropy.
\begin{lem}\label{lem: basic properties entropy} Consider countable partitions $\rho,\gamma$ and $\delta$ of $G^{\mathbb{N}}$, and a $\mu$-boundary $\mathbf{X}$. The following properties hold.
	\begin{enumerate}
		\item $	H_{\mathbf{X}}(\rho\vee \gamma \mid \delta)= H_{\mathbf{X}}(\rho\mid \gamma\vee \delta)+H_{\mathbf{X}}(\gamma\mid\delta).$
		\item $H_{\mathbf{X}}(\rho\mid \gamma)\le H_{\mathbf{X}}(\rho\vee \delta\mid \gamma).$
		\item $H_{\mathbf{X}}(\rho\mid \gamma \vee \delta)\le H_{\mathbf{X}}(\rho\mid \gamma).$
	\end{enumerate}
\end{lem}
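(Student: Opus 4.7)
The plan is to prove each of the three identities pointwise in $\xi \in X$, working with the conditional probability measures $\P^\xi$ provided by Rokhlin's disintegration, and then integrate against $\lambda$ to obtain the corresponding statement for $H_\mathbf{X}$. Throughout, the basic tool is the pointwise identity $H_\xi(\rho \mid \gamma) = H_\xi(\rho \vee \gamma) - H_\xi(\gamma)$, whose integrated version is recalled in the excerpt just before the lemma. All sums below are implicitly over triples $(P,Q,R) \in \rho \times \gamma \times \delta$ with positive $\P^\xi$-mass, which is the standard convention for manipulating these expressions.

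For property (1), I would establish the pointwise chain rule $H_\xi(\rho \vee \gamma \mid \delta) = H_\xi(\rho \mid \gamma \vee \delta) + H_\xi(\gamma \mid \delta)$ either by a direct expansion, using the factorization $\P^\xi(P \cap Q \mid R) = \P^\xi(P \mid Q \cap R)\,\P^\xi(Q \mid R)$ so that the logarithm of the product splits as a sum, or more economically by a two-line telescoping from $H_\xi(\alpha \mid \beta) = H_\xi(\alpha \vee \beta) - H_\xi(\beta)$ applied to $(\alpha,\beta) = (\rho \vee \gamma,\, \delta)$ and to $(\alpha,\beta) = (\rho,\, \gamma \vee \delta)$. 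Integration over $\xi$ yields (1).

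Once (1) is available, property (2) follows by applying the chain rule with the roles of $\rho$ and $\delta$ reversed in the first slot: $H_\mathbf{X}(\rho \vee \delta \mid \gamma) = H_\mathbf{X}(\delta \mid \rho \vee \gamma) + H_\mathbf{X}(\rho \mid \gamma)$, and the correction term $H_\mathbf{X}(\delta \mid \rho \vee \gamma)$ is non-negative because each summand defining $H_\xi(\delta \mid \rho \vee \gamma)$ has the form $-p \log p$ with $p \in [0,1]$.

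The only genuinely entropic step is property (3), which says that further conditioning cannot increase entropy. Here I would fix $\xi$ and invoke Jensen's inequality for the concave function $\phi(x) = -x \log x$: for each $Q \in \gamma$ and $P \in \rho$ one has $\P^\xi(P \mid Q) = \sum_{R \in \delta} \P^\xi(R \mid Q)\,\P^\xi(P \mid Q \cap R)$, whence $\phi\!\left(\P^\xi(P \mid Q)\right) \ge \sum_{R} \P^\xi(R \mid Q)\,\phi\!\left(\P^\xi(P \mid Q \cap R)\right)$. Summing over $P$, weighting by $\P^\xi(Q)$, summing over $Q$, and then integrating in $\xi$ yields $H_\mathbf{X}(\rho \mid \gamma \vee \delta) \le H_\mathbf{X}(\rho \mid \gamma)$. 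This single application of Jensen is the one non-algebraic point in the lemma and thus the main (very mild) obstacle; the rest is bookkeeping.
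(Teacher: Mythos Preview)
Your argument is correct and is precisely the standard derivation of these identities. The paper does not give its own proof of this lemma; it simply refers to \cite[Corollaries 2.5 and 2.6]{MartinEngland1981} and \cite[Section 5]{Rohlin1967}, where exactly the computations you outline (chain rule via the multiplicative identity for conditional probabilities, monotonicity in the first slot from the chain rule plus non-negativity, and Jensen's inequality for $x\mapsto -x\log x$ to show that further conditioning does not increase entropy) are carried out.
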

We refer to \cite[Corollaries 2.5 and 2.6]{MartinEngland1981} and \cite[Section 5]{Rohlin1967} for the proofs of these properties, and more generally to \cite[Chapter 2]{MartinEngland1981} for more details about entropy.
\begin{rem}\label{rem: partitions defined by random variables}
In order to simplify notation, throughout this paper we will use the same symbol to denote both a random variable and the partition of the space of sample paths that it defines. More precisely, let $X:(G^{\mathbb{N}},\P)\to D$ be a random variable from the space of sample paths with values on a countable set $D$. Then the countable partition $\rho_X$ of $(G^{\mathbb{N}},\P)$ defined by $X$ is given by saying that two sample paths  $\mathbf{w},\mathbf{w^\prime}\in G^{\mathbb{N}}$ belong to the same element of $\rho_X$ if and only if $X(\mathbf{w})=X(\mathbf{w^{\prime}})$. Then, we will denote $H(X)\coloneqq H(\rho_X)$.
\end{rem}

\section{Setup and the coarse trajectory}\label{section: the coarse trajectory}
\subsection{Definition of the boundary}\label{subsection: construction of the boundary}

Let $A$ and $B$ be countable groups, and consider their wreath product $A\wr B=\bigoplus_{B}A\rtimes B$ endowed with a probability measure $\mu$. Let us denote the $\mu$-random walk on $A\wr B$ as $w_n\coloneqq (\varphi_n,X_n)=(f_1,Y_1)\cdots (f_n,Y_n), \ n\ge 1,$ where $g_i\coloneqq (f_i,Y_i)$, $i\ge 1$, is a sequence of independent increments with law $\mu$.

The $\mu$-random walk on $A\wr B$ naturally projects to a random walk on $B$. Namely, denote by $\pi_B:A\wr B\to B$ the canonical projection, and consider $\mu_B\coloneqq \pi_{*}\mu$. Then $\{X_n\}_{n\ge 0}$ is a $\mu_B$-random walk on $B$. Let us denote by $(\partial B,\nu_B)$ the Poisson boundary of the random walk $(B,\mu_B)$. Note that $(\partial B,\nu_B)$ is also an $(A\wr B ,\mu)$-boundary, and denote by $\mathbf{bnd}_{B}:(A\wr B)^{\mathbb{N}}\to \partial B$ the associated boundary map.

\begin{defn}\label{def: stabilization of lamps}
	We say that the lamp configurations of the $\mu$-random walk $\{(\varphi_n,X_n)\}_{n\ge 0}$ on $A\wr B$ \emph{stabilize almost surely} if for every $b\in B$, there exists $N\ge 1$ such that $\varphi_n(b)=\varphi_N(b)$ for every $n\ge N$. 
\end{defn}

The hypothesis that lamp configurations stabilize almost surely implies that there is a map $\mathcal{L}:(A\wr B)^{\mathbb{N}}\to A^B$ that assigns to $\P$-almost every trajectory $w_n=(\varphi_n,X_n)$, $n\ge 1$, the limit value $\varphi_{\infty}$ of the lamp configurations $\varphi_n$, $n\ge 1$. Let us denote $\nu_{\mathcal{L}}\coloneqq\mathcal{L}_{*}\P$ the associated hitting measure. Since the map $\mathcal{L}$ is shift-invariant, the measure $\nu_{\mathcal{L}}$ is $\mu$-stationary, and the space $(A^B,\nu_{\mathcal{L}})$ is an $(A\wr B,\mu )$-boundary. The map $
	(A\wr B)^{\mathbb{N}}\to A^B \times \partial B, 
	w\mapsto \left(\mathcal{L}(w),\mathrm{bnd}_B(w)\right),$ pushes forward $\P$ to a $\mu$-stationary measure $\nu$ on $A^B\times \partial B$. With this, $(A^B\times \partial B,\nu)$ is a $\mu$-boundary of $A\wr B$. Theorem \ref{thm: lamplighter Poisson boundary} claims that in this setting, this space is the Poisson boundary of the $\mu$-random walk on $A\wr B$.
\subsection{Transience}
The following (not difficult) lemma states that the stabilization of lamp configurations implies that the projected random walk on $B$ is transient, as long as the step distribution has a non-trivial projection to the lamps group $\bigoplus_{B} A$. This observation goes back to the original paper of Kaimanovich-Vershik \cite{KaimanovcihVershik1983}, and we provide its proof for the convenience of the reader.
\begin{lem}\label{lem: the rw on B is transient}
	Let $\mu$ be a probability measure on $A\wr B$ such that lamp configurations stabilize almost surely. Assume that $\supp{\mu}$ is not completely contained in $B$. Then $\mu$ induces a transient random walk on $B$.
\end{lem}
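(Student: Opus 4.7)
The plan is to prove the contrapositive: assuming the projected walk $\{X_n\}_{n\ge 0}$ on $B$ is recurrent, I will exhibit a single lamp position whose value changes at infinitely many times along $\P$-a.e.\ sample path, contradicting almost-sure stabilization. Since $\supp{\mu}$ is not contained in $B$, I first choose $(f_0,b_0)\in \supp{\mu}$ with $f_0\neq \mathds{1}$, together with a site $c_0\in B$ such that $f_0(c_0)\neq e_A$; set $p\coloneqq \mu(\{(f_0,b_0)\})>0$.

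\textbf{Construction via strong Markov and Borel--Cantelli.} Recurrence means that $X_n=e_B$ for infinitely many $n$, $\P$-a.s., so the successive return times $T_1<T_2<\cdots$ of $\{X_n\}$ to $e_B$ are well-defined stopping times that are a.s.\ finite. The next step is to apply the strong Markov property at each $T_k$ to show that the increments $(g_{T_k+1})_{k\ge 1}$ form an i.i.d.\ sequence with law $\mu$. Indeed, by induction on $k$, the variables $g_{T_1+1},\ldots,g_{T_{k-1}+1}$ are measurable with respect to $\mathcal{F}_{T_k}$, while strong Markov at $T_k$ gives that $g_{T_k+1}$ is independent of $\mathcal{F}_{T_k}$ with distribution $\mu$. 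Consequently the events $E_k\coloneqq \{g_{T_k+1}=(f_0,b_0)\}$ are i.i.d.\ Bernoulli$(p)$, and the second Borel--Cantelli lemma yields that $E_k$ occurs for infinitely many $k$, almost surely.

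\textbf{Conclusion.} I would then unwind the wreath product update rule: from $\varphi_n=\varphi_{n-1}\oplus X_{n-1}\cdot f_n$, the lamp at $c_0$ evolves as $\varphi_n(c_0)=\varphi_{n-1}(c_0)\cdot f_n(X_{n-1}^{-1}c_0)$. On each event $E_k$ one has $X_{T_k}=e_B$ and $f_{T_k+1}=f_0$, so $\varphi_{T_k+1}(c_0)=\varphi_{T_k}(c_0)\cdot f_0(c_0)$, which differs from $\varphi_{T_k}(c_0)$ because $f_0(c_0)\neq e_A$. Hence the lamp at $c_0$ is modified at infinitely many times on a set of full measure, contradicting almost-sure stabilization. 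I do not anticipate a serious obstacle; the only point to handle with care is the inductive justification that the $g_{T_k+1}$ are \emph{jointly} independent, rather than merely each individually $\mu$-distributed, which is what is needed in order to invoke Borel--Cantelli.
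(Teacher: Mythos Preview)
Your proof is correct and follows essentially the same approach as the paper: argue by contrapositive, fix an increment $(f_0,b_0)\in\supp{\mu}$ with a nontrivial lamp at $c_0$, and use Borel--Cantelli together with recurrence to produce infinitely many modifications at $c_0$. The paper's proof is a one-line sketch invoking Borel--Cantelli directly, whereas you make explicit the strong Markov argument at the return times $T_k$ that justifies the joint independence of the $g_{T_k+1}$; this is exactly the detail the paper leaves implicit.
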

\begin{proof}
	Indeed, suppose that the projection of the random walk to $B$ is recurrent. Consider an element $g=(f,x)\in A\wr B$ with $\mu(g)>0$, such that $\supp{f}\neq \varnothing$. Choose $b\in B$ such that $f(b)\neq e_A$. Then the Borel-Cantelli Lemma \cite[Lemma 2 of Section VIII.3]{Feller1968} implies that the lamp configuration at $b$ will change infinitely many times along almost every trajectory of the $\mu$-random walk.
\end{proof}

\begin{exmp} For probability measures on $A\wr B$ with an infinite first moment, the lamp configurations may fail to stabilize almost surely, even when the induced random walk on $B$ is transient. To illustrate this, let us consider $A=\Z/2\Z$ and $B=\Z^d$ with $d\ge 3$. Choose a sequence $\{\alpha_i\}_{i \ge 1}$ of positive numbers such that $\sum_{i\ge 1}\alpha_i=1$. Let $\mu_{\mathrm{srw}}$ be the uniform probability measure on the canonical symmetric generating set of $\Z^d$. For each $i\ge 1$ denote by $\eta_i$ the delta measure concentrated on the element of $\bigoplus_{\mathbb{Z}^d}\Z/2\Z$ that assigns the non-identity value $1$ to $b\in \Z^d$ if and only if $|b|\le i$, where $|\cdot|$ denotes the canonical word length on $\Z^d$.

Consider the probability measure $\frac{1}{2}\mu_{\mathrm{srw}}+\frac{1}{2}\sum_{i\ge 1}\alpha_i\eta_i$, which has a transient projection to $\Z^d$ since we assume $d\ge 3$. The measure has finite Shannon entropy if $-\sum_{i\ge 1}\alpha_i\log(\alpha_i)<\infty$. The event where the lamp configurations do not stabilize has probability at least as large as that of the event where for infinitely many values of $n$ there is a multiplication at time $n$ by an element in the support of $\eta_i$ for some $i\ge n$. The latter event has probability $1$ if $\sum_{i\ge 1}i\alpha_i$ diverges, due to the Borel-Cantelli lemma.

The above example should not lead one to believe that all heavy-tailed measures fail to exhibit the property of stabilization of lamp configurations. For every $i\ge 1$ denote by $\theta_i$ the uniform probability measure on the set of elements of $\bigoplus_{\mathbb{Z}^d}\Z/2\Z$ that assign $1$ to a unique $b\in \Z^d$ with $|b|=2^{i}$ and $0$ to all other elements, and consider the probability measure $\frac{1}{2}\mu_{\mathrm{srw}}+\frac{1}{2}\sum_{i\ge 1}\alpha_i\theta_i$. This measure has finite Shannon entropy if $\sum_{i\ge 1}\alpha_i\left(i(d-1)\log2-\log\alpha_i\right)<\infty$, and an infinite first moment when $\sum_{i\ge 1}\alpha_i 2^{i}$ diverges. By using the Borel-Cantelli lemma and the fact that the return probability to the identity at time $n$ on $\Z^d$ decays asymptotically as $n^{-d/2}$, one can see that the lamp configurations stabilize along almost every trajectory if $\sum_{i\ge 0}\alpha_i 2^{-i(\frac{d}{2}-1)}<\infty$, which always holds when $d\ge 3$.
\end{exmp}

Note that if the induced random walk on $B$ is recurrent, then Lemma \ref{lem: the rw on B is transient} implies that the support of $\mu$ must be completely contained in $B$. In particular, the space $(A^B,\nu_{\mathcal{L}})$ is trivial, since no modification to the lamp coordinate is possible. In that case, $(A^B\times \partial B,\nu)$ is isomorphic to $(\partial B,\nu_B)$, and the conclusion of Theorem \ref{thm: lamplighter Poisson boundary} follows. 

\subsection{Main Assumptions}\label{subsection: main assumptions} From now on and throughout the rest of the paper, we will assume that $H(\mu)<\infty$, that lamp configurations stabilize almost surely, and that the induced random walk on $B$ is transient.

We recall that in the entropy estimates below and in the following sections, we will use the same notation both for a random variable as well as for the partition of the space of sample path it defines (see Remark \ref{rem: partitions defined by random variables}).
\subsection{The coarse trajectory in the base group}
The following corresponds to Step (\labelcref{step: step 1}) of the sketch of the proof (Subsection \ref{subsection: sketch of the proof}).

\begin{defn}[Coarse trajectory]\label{def: coarse trajectory in base group}  Let $t_0\ge 1$ and $n\ge 1$. We define the \emph{$t_0$-coarse trajectory up to instant $n$} on the base group $B$ of the $\mu$-random walk by
	\begin{equation*}
		\mathcal{P}_n^{t_0}\coloneqq \left(X_{t_0}, X_{2t_0},\ldots, X_{\lfloor n/t_0\rfloor t_0 }\right)\in B^{\lfloor n/t_0\rfloor}.
	\end{equation*}
	That is, $\mathcal{P}_n^{t_0}$ is the ordered tuple of length $\lfloor n/t_0\rfloor$ formed by the random variables $X_{it_0}$, $i=1,\ldots, \lfloor n/t_0\rfloor$, that correspond to the projections of the $\mu$-random walk to $B$ every $t_0$ steps. When $n$ is clear from the context, we will refer to $\mathcal{P}_n^{t_0}$ simply as the $t_0$-coarse trajectory.
\end{defn}


\begin{lem}\label{lem: coarse traj has small entropy} Let $\varepsilon>0$. Then there exists $T\ge1$ such that for every $t_0\ge T$,
	\begin{equation*}\label{eq: coarse trajectory entropy}
		H_{\partial B}(\mathcal{P}_n^{t_0} )<\varepsilon n, \text{ for any }n\ge 1.
	\end{equation*}
\end{lem}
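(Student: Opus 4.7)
The plan is to use the chain rule together with the Markov structure of the boundary-conditioned walk to collapse $H_{\partial B}(\mathcal{P}_n^{t_0})$ to an additive sum of copies of $H_{\partial B}(\alpha_{t_0})$, and then to invoke Kaimanovich's conditional entropy criterion on the base walk to make $H_{\partial B}(\alpha_{t_0})/t_0$ smaller than $\varepsilon$. For $n<t_0$ the partition $\mathcal{P}_n^{t_0}$ is trivial and the inequality is vacuous, so I focus on the case $n\ge t_0$ and set $k\coloneqq \lfloor n/t_0\rfloor$.

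First I would iterate the chain rule for mean conditional entropy from Lemma \ref{lem: basic properties entropy} to write
\[H_{\partial B}(\mathcal{P}_n^{t_0})=\sum_{j=1}^{k} H_{\partial B}\bigl(X_{jt_0}\mid X_{t_0},\ldots,X_{(j-1)t_0}\bigr).\]
Conditional on a boundary point $\xi\in\partial B$, the induced $\mu_B$-walk on $B$ becomes a time-homogeneous Markov chain---its Doob $h$-transform---whose one-step transitions from $x$ to $y$ are $\frac{k(y,\xi)}{k(x,\xi)}\mu_B(x^{-1}y)$, where $k(\cdot,\xi)$ is the Martin kernel. This Markov property collapses each summand to $H_{\partial B}\bigl(X_{jt_0}\mid X_{(j-1)t_0}\bigr)$.

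The heart of the argument is the stationarity identity
\[H_{\partial B}\bigl(X_{jt_0}\mid X_{(j-1)t_0}\bigr)=H_{\partial B}(\alpha_{t_0}),\qquad 1\le j\le k,\]
which I would establish via the cocycle identity $k(xz,\xi)/k(x,\xi)=k(z,x^{-1}\xi)$ coming from $B$-equivariance of the boundary map. Given $X_{(j-1)t_0}=x$, this identity shows that the conditional law of the increment $Z_j\coloneqq X_{(j-1)t_0}^{-1}X_{jt_0}$ under $\P^\xi$ coincides with the conditional law of $X_{t_0}$ under $\P^{x^{-1}\xi}$, so its entropy equals $H_{x^{-1}\xi}(\alpha_{t_0})$. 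Integrating against the joint law
\[\P\bigl(X_{(j-1)t_0}=x,\,\mathbf{bnd}_B\in d\xi\bigr)=\mu_B^{*(j-1)t_0}(x)\,k(x,\xi)\,d\nu_B(\xi)\]
and then changing variable to $\eta=x^{-1}\xi$, under which $k(x,\xi)\,d\nu_B(\xi)$ is transported to $d\nu_B(\eta)$, cancels both the dependence on $x$ and on $j$ and leaves exactly $H_{\partial B}(\alpha_{t_0})$.

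Combining the three displays yields $H_{\partial B}(\mathcal{P}_n^{t_0})=k\cdot H_{\partial B}(\alpha_{t_0})\le (n/t_0)\,H_{\partial B}(\alpha_{t_0})$. Since $H(\mu_B)\le H(\mu)<\infty$ by projection and $(\partial B,\nu_B)$ is by assumption the Poisson boundary of $(B,\mu_B)$, Kaimanovich's conditional entropy criterion (Theorem \ref{thm: conditional entropy alternative formulation}) gives $H_{\partial B}(\alpha_{t_0})/t_0\to 0$ as $t_0\to\infty$; choosing $T$ so that this ratio is at most $\varepsilon$ for $t_0\ge T$ completes the proof. The principal obstacle is the stationarity identity: a naive subadditive bound $H_{\partial B}(\mathcal{P}_n^{t_0})\le \sum_j H_{\partial B}(\alpha_{jt_0})$ would grow quadratically in $n$, so exploiting the Markov structure of the $h$-transform together with the $B$-equivariance of the Martin kernel to pass from a quadratic to an additive bound in $k$ is essential.
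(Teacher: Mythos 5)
Your proposal is correct and follows essentially the same route as the paper: decompose $H_{\partial B}(\mathcal{P}_n^{t_0})$ by the chain rule, reduce each summand to $H_{\partial B}(X_{t_0})$ by stationarity, and choose $T$ via the conditional entropy criterion applied to $(B,\mu_B)$. The only cosmetic difference is that the paper obtains the bound $\sum_j H_{\partial B}(X_{(j+1)t_0}\mid X_{jt_0})$ as an inequality by simply dropping conditioning (Item 3 of Lemma \ref{lem: basic properties entropy}) rather than invoking the Markov property of the Doob-conditioned chain, and it asserts the stationarity identity $H_{\partial B}(X_{(j+1)t_0}\mid X_{jt_0})=H_{\partial B}(X_{t_0})$ without the Radon--Nikodym cocycle computation you spell out.
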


\begin{proof}
	Let $\varepsilon>0$. Then the conditional entropy criterion (Theorem \ref{thm: conditional entropy alternative formulation}) implies that there exists $T\ge 1$ such that for every $t_0\ge T$, we have $H_{\partial B}(X_{t_0})<\varepsilon t_0$.
	
	Let $n\ge 1$, $t_0\ge T$, and denote $s=\lfloor n/t_0\rfloor.$ Using Item 1 of Lemma \ref{lem: basic properties entropy}, we have that 
	\[ H_{\partial B}(X_{t_0},X_{2t_0},X_{3t_0},\ldots, X_{st_0})= H_{\partial B}(X_{t_0})+ H_{\partial B}(X_{2t_0},X_{3t_0},\ldots, X_{st_0}\mid X_{t_0}). \]
	
	Additionally, using Items 1 and 3 of Lemma \ref{lem: basic properties entropy} we have
	\begin{align*}
		H_{\partial B}(X_{2t_0},X_{3t_0},\ldots, X_{st_0}\mid X_{t_0})&= H_{\partial B}(X_{2t_0}\mid X_{t_0}) + H_{\partial B}(X_{3t_0},\ldots, X_{st_0}\mid X_{t_0},X_{2t_0})\\
		&\le H_{\partial B}(X_{2t_0}\mid X_{t_0}) + H_{\partial B}(X_{3t_0},\ldots, X_{st_0}\mid X_{2t_0}),
	\end{align*} 
	so that \[ H_{\partial B}(X_{t_0},X_{2t_0},X_{3t_0},\ldots, X_{st_0})\le H_{\partial B}(X_{t_0})+ H_{\partial B}(X_{2t_0}\mid X_{t_0}) + H_{\partial B}(X_{3t_0},\ldots, X_{st_0}\mid X_{2t_0}). \]
	
	By repeating this argument $s-1$ times, we obtain \[H_{\partial B}(X_{t_0},X_{2t_0},\ldots, X_{st_0})\le\sum_{j=0}^{s-1}H_{\partial B}\left(X_{(j+1)t_0}\mid X_{jt_0}\right).\] Finally, using the above together with the fact that $H_{\partial B}\left(X_{(j+1)t_0}\mid X_{jt_0}\right)=H_{\partial B}(X_{t_0})$ for every $j=0,\ldots,s-1$, we get
	\begin{equation*}
		H_{\partial B}(X_{t_0},X_{2t_0},\ldots, X_{st_0})\le sH_{\partial B}(X_{t_0})
		< s\varepsilon t_0\le \varepsilon n.
	\end{equation*}
\end{proof}
\subsection{Good and bad intervals}
We now divide the time interval between $0$ and $n$ into subintervals of length $t_0$. Afterward, we define what it means for these intervals to be \emph{good} or \emph{bad}, and introduce the partitions of bad increments. This corresponds to steps  \prange{step: step 3}{step: step 5} of the sketch of the proof (Subsection \ref{subsection: sketch of the proof}).
\begin{defn}\label{def: time intervals}
	Consider $t_0\ge 1$. For $j=1,2,\ldots,\lfloor n/t_0\rfloor$, let us define the \emph{$j$-th interval} $I_j\coloneqq \{(j-1)t_0+1,\ldots, jt_0\}$, and the \emph{final interval} $I_{\mathrm{final}}\coloneqq \{\lfloor n/t_0\rfloor t_0+1,\ldots, n \}$.
\end{defn}
Note that if $n$ is a multiple of $t_0$, then $I_{\mathrm{final}}=\varnothing$.

Recall that we denote by $\{g_i\}_{i\ge 1}$ the sequence of (independent, identically distributed) increments of the $\mu$-random walk on $A\wr B$.
\begin{defn}\label{def: bad elements and bad intervals}
	Let us consider finite subsets $R\subseteq B$ and $L\subseteq A$, with $e_B\in R$ and $e_A\in L$.
	\begin{itemize}
		\item Say that a group element $g=(f,x)$ is \emph{$(R,L)$-good} if $x\in R$, $\supp{f}\subseteq R$ and $f(b)\in L$ for every $b\in B$. Otherwise, call $g$ an \emph{$(R,L)$-bad} element.
		\item For each $j=1,2,\ldots,\lfloor n/t_0\rfloor$, say that the interval $I_j$ is \emph{$(R,L)$-good} if all the increments $g_i$, $i\in I_j$, are $(R,L)$-good. Otherwise, let us say that the interval $I_j$ is \emph{$(R,L)$-bad}.
	\end{itemize}
\end{defn}

Let us fix throughout the paper the auxiliary symbol $\star$. This symbol will play the role of unknown information, and it will serve as a tool to obtain low entropy events from high entropy ones. This will be carried out using the following general lemma about entropy, which is stated and proved \cite[Lemma 2.4]{ChawlaForghaniFrischTiozzo2022}.
\begin{lem}[Obscuring lemma]\label{lem: obscuring values of random variable}
	Let $D$ be a countable set, and fix an element $\star\notin D$. Let $X:(\Omega,\P)\to D$ be a random variable with $H(X)<\infty$. Then, for every $\varepsilon>0$, there exists $\delta>0$ such that for any measurable set $E\subseteq \Omega$ with $\P(E)<\delta,$ the random variable
	\begin{equation*}\label{eq: obscured random variable}
		\widetilde{X}(\omega)\coloneqq\begin{cases}
			X(\omega)&\text{ if }\omega\in E, \text{ and}\\
			\star&\text{ if }\omega\notin E,
		\end{cases}
	\end{equation*}
	satisfies $H(\widetilde{X})<\varepsilon$.
\end{lem}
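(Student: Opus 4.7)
\emph{Proposal.} The plan is to compute $H(\widetilde{X})$ directly and bound its two natural pieces. Writing $\delta' \coloneqq \P(E)$, $p_d \coloneqq \P(X=d)$ and $q_d \coloneqq \P(X=d,E)$, so that $q_d \le p_d$ and $\sum_d q_d = \delta'$, one has
\begin{equation*}
H(\widetilde{X}) \;=\; -(1-\delta')\log(1-\delta') \;-\; \sum_{d\in D} q_d \log q_d.
\end{equation*}
The first summand is the binary entropy $H_2(\delta')$, which tends to $0$ as $\delta'\to 0$, so it can be made smaller than $\varepsilon/3$ by choosing $\delta$ small. The content of the lemma is therefore to control the second sum, which is the entropy of a sub-probability distribution of total mass $\delta'$.

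The key point is to split the index set according to the size of $p_d$. Since $H(X)<\infty$, the tail of the entropy series is summable: given $\varepsilon>0$, choose $M>e$ large enough so that
\begin{equation*}
\sum_{d \,:\, p_d < 1/M} -p_d \log p_d \;<\; \varepsilon/3,
\end{equation*}
and let $F \coloneqq \{d : p_d \ge 1/M\}$. Then $F$ is finite, with $|F|\le M$. On $F$ the bound is straightforward: for every $d\in F$ one has $q_d \le \delta'$, so if $\delta'<1/e$ the monotonicity of $-x\log x$ on $[0,1/e]$ gives
\begin{equation*}
\sum_{d\in F} -q_d \log q_d \;\le\; |F|\cdot(-\delta'\log \delta') \;\le\; M(-\delta'\log \delta'),
\end{equation*}
which tends to $0$ as $\delta'\to 0$. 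For $d\notin F$ one has $q_d \le p_d < 1/M < 1/e$, so by the same monotonicity
\begin{equation*}
\sum_{d\notin F} -q_d \log q_d \;\le\; \sum_{d\notin F} -p_d\log p_d \;<\; \varepsilon/3.
\end{equation*}

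It then suffices to choose $\delta>0$ small enough that $\delta < 1/e$, $H_2(\delta)<\varepsilon/3$, and $M(-\delta\log\delta) < \varepsilon/3$; summing the three bounds yields $H(\widetilde{X})<\varepsilon$ whenever $\P(E)<\delta$. The only nontrivial point in the whole argument is using the hypothesis $H(X)<\infty$ to truncate the infinite part of $D$; without this assumption one can only control the contribution from finitely many atoms of $X$, which would not suffice when $D$ is infinite.
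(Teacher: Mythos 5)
Your proof is correct and follows essentially the same route as the paper's: both truncate $D$ to a finite set on which the tail of the entropy series $\sum_d -p_d\log p_d$ is small (you via the threshold $p_d\ge 1/M$, the paper via an unspecified finite $Q$), bound the finitely many remaining terms by $|F|\cdot(-\delta\log\delta)$ using the monotonicity of $-x\log x$ on $[0,1/e]$, and absorb the $\star$-atom into a $\kappa(1-\delta)$ term. The only slip is cosmetic: $-(1-\delta')\log(1-\delta')$ is not the binary entropy $H_2(\delta')$ but is dominated by it, so your bound still goes through.
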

\begin{defn}\label{defn: define the partition of bad and final intervals.}
	Let $t_0\ge 1$, and consider finite subsets $R\subseteq B$ and $L\subseteq A$ with $e_B\in R$ and $e_A\in L$.
	For every $j=1,2,\ldots,\lfloor n/t_0 \rfloor$, define the random variable
	\begin{equation*}
		Z_j\coloneqq\begin{cases}
			(g_i)_{i\in I_j}, &\text{ if }I_j \text{ is an }(R,L)\text{-bad interval, and}\\
			\star, &\text{otherwise,}
		\end{cases}
	\end{equation*}
	which has values in the space $(A\wr B)^{I_j}\cup \{\star\}$.
	Let us also define $Z_{\mathrm{final}}\coloneqq (g_i)_{i\in I_{\mathrm{final}}}$, which has values in the space $(A\wr B)^{I_{\mathrm{final}}}$. We define the \emph{$(t_0,R,L)$-bad increments} by
	\begin{equation*}
		\beta_n(t_0,R,L)\coloneqq (Z_{1},Z_{2},\ldots,Z_{\lfloor n/t_0 \rfloor},Z_{\mathrm{final}}).
	\end{equation*}
	
\end{defn}
\begin{lem}\label{lem: bad increments have small entropy} Let $t_0\ge 1$. For any $\varepsilon>0$, there exist finite subsets $R\subseteq B$ and $L\subseteq A$ with $e_B\in R$ and $e_A\in L$, and $K_1\ge 0$ such that for every $n\ge 1$,
	\begin{equation*}\label{eq: bad increments entropy}
		H(\beta_n(t_0,R,L))<\varepsilon n + K_1.
	\end{equation*}
\end{lem}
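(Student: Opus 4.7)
The plan is to apply the Obscuring Lemma (Lemma~2.11 above) to each block of increments inside a single interval $I_j$, at the cost of revealing them only on the very small-probability event that $I_j$ is actually $(R,L)$-bad. The key quantitative input is that, for fixed $t_0$, the probability that a given interval is $(R,L)$-bad can be made arbitrarily small by choosing $R$ and $L$ large, while the random variable $(g_i)_{i\in I_j}$ has finite entropy $t_0 H(\mu)$.

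More precisely, given $\varepsilon>0$ and $t_0\ge 1$, I first apply the Obscuring Lemma to the finite-entropy random variable $(g_i)_{i\in I_j}$ (taking values in $(A\wr B)^{t_0}$) with target entropy $\varepsilon t_0$. This produces a threshold $\delta>0$ such that, whenever an event $E_j\subseteq \Omega$ has $\P(E_j)<\delta$, the random variable $\widetilde{Z}_j$ equal to $(g_i)_{i\in I_j}$ on $E_j$ and to $\star$ off $E_j$ satisfies $H(\widetilde{Z}_j)<\varepsilon t_0$. Second, since $A\wr B$ is countable and $\mu$ is a probability measure, I can exhaust the support of $\mu$ by a finite set $F\subseteq A\wr B$ with $\mu((A\wr B)\setminus F)<\delta/t_0$; then there exist finite sets $R\subseteq B$ and $L\subseteq A$ (with $e_B\in R$, $e_A\in L$) such that every element of $F$ is $(R,L)$-good. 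Consequently $\P(g_i \text{ is }(R,L)\text{-bad})<\delta/t_0$, and by a union bound over the $t_0$ indices in $I_j$, $\P(I_j \text{ is }(R,L)\text{-bad})<\delta$. This is exactly the event $E_j$ on which $Z_j=(g_i)_{i\in I_j}$ and off of which $Z_j=\star$, so $H(Z_j)<\varepsilon t_0$.

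Third, I add up using subadditivity of entropy for joint distributions. Writing
\[
H(\beta_n(t_0,R,L)) \;\le\; \sum_{j=1}^{\lfloor n/t_0\rfloor} H(Z_j) + H(Z_{\mathrm{final}}),
\]
the first sum is bounded by $\lfloor n/t_0\rfloor\cdot \varepsilon t_0 \le \varepsilon n$, and the final term satisfies $H(Z_{\mathrm{final}})\le |I_{\mathrm{final}}|\,H(\mu)\le (t_0-1)H(\mu)$, because $Z_{\mathrm{final}}$ is a tuple of at most $t_0-1$ i.i.d.\ copies of $\mu$. Setting $K_1\coloneqq (t_0-1)H(\mu)$ yields $H(\beta_n(t_0,R,L))<\varepsilon n + K_1$ for every $n\ge 1$, as required.

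The one point that requires a little care is the choice of order of quantifiers: $t_0$ is fixed at the outset, so the final constant $K_1$ is allowed to depend on $t_0$ (and on $H(\mu)$), and the obscuring threshold $\varepsilon t_0$ is chosen precisely so that after summing over $\lfloor n/t_0\rfloor\le n/t_0$ intervals one recovers the linear bound $\varepsilon n$. I do not anticipate any real obstacle beyond bookkeeping; the essential ingredients are finiteness of $H(\mu)$ (to justify applying the Obscuring Lemma to $(g_i)_{i\in I_j}$), countability of $A\wr B$ (to exhaust the mass of $\mu$ by finite sets and thereby make $\P(g_i \text{ bad})$ as small as desired), and subadditivity of Shannon entropy over tuples.
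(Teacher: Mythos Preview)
Your proof is correct and follows essentially the same approach as the paper: apply the Obscuring Lemma to the block $(g_i)_{i\in I_j}$ with a small target entropy, choose $R$ and $L$ large enough that $\P(I_j\text{ is }(R,L)\text{-bad})$ falls below the resulting threshold $\delta$, and sum using subadditivity. The only cosmetic differences are that you take the target entropy to be $\varepsilon t_0$ rather than $\varepsilon$ (both work, since $\lfloor n/t_0\rfloor\le n$), and you are more explicit than the paper about how to manufacture $R$ and $L$ from a finite set $F$ exhausting most of the mass of $\mu$, together with a union bound over the $t_0$ increments in an interval.
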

\begin{proof}
	Consider $t_0\ge 1$ and $\varepsilon>0$. First let us note that $I_{\mathrm{final}}$ consists of at most $t_0$ instants, and hence there is a constant $K_1\ge 0$, that only depends on $\mu$ and $t_0$, such that $H(Z_{\mathrm{final}})\le K_1$.
	
	Let us introduce the random variables $W_{j}$, $1\le j \le \lfloor n/t_0 \rfloor$, defined by $W_{j}=(g_i)_{i\in I_j}$. That is, $W_j$ corresponds to the increments of the $\mu$-random walk that occurred during the interval $I_j$. Note that the variables $W_{j}$ are identically distributed and have finite entropy. By using Lemma \ref{lem: obscuring values of random variable}, we can find $\delta>0$ such that if $E\subseteq (A\wr B)^{t_0}$ satisfies $\mu^{t_0}(E)<\delta$, then the variable
	\begin{equation*}
		\widetilde{W}_{j}=\begin{cases}
			(g_i)_{i\in I_j}, &\text{ if }E\text{ occurs}, \text{ and}\\
			\star, &\text{ otherwise,}
		\end{cases}
	\end{equation*}
	satisfies $H(\widetilde{W}_{j})<\varepsilon$.
	
	Choose finite subsets $R\subseteq B$ and $L\subseteq A$ with $e_B\in R$ and $e_A\in L$, such that
	\begin{equation*}
		\mu^{t_0}(I_1 \text{ is an }(R,L)\text{-bad interval})<\delta.
	\end{equation*}
	For every $1\le j \le \lfloor n/t_0 \rfloor$, consider the random variable $\widetilde{W}_{j}$, associated with the event $E_j\coloneqq \left \{I_j \text{ is an }(R,L)\text{-bad interval} \right\}$. Then $H(\widetilde{W}_{j})<\varepsilon$, and we have $Z_j=\widetilde{W}_{j}$. With this, we conclude that $
		H(\beta_n(t_0,R,L))\le \sum_{j=1}^{ \lfloor n/t_0 \rfloor}H(Z_j)+Z_{\mathrm{final}}<\varepsilon n +K_1,	\ \text{ for every }n\ge 1.$
	
\end{proof}

The following lemma says that the trajectory at time $n$ can be deduced from the trajectory at the last time instant that is a multiple of $t_0$, together with the partition of $(t_0,R,L)$-bad intervals. The statement is expressed in terms of the mean conditional entropy (see Equation \eqref{eq: mean conditional entropy on boundary and partition}) with respect to the boundary of infinite lamp configurations $A^B$.

\begin{lem}\label{lem: it suffices to guess the last multiple of t0} Let $n\ge 1$, $t_0\ge 1$ and consider finite subsets $R\subseteq B$ and $L\subseteq A$ with $e_B\in R$ and $e_A\in L$. For every $n\ge 1$,
	\begin{equation*}
		H_{A^B}\left(\varphi_n,X_n\mid \varphi_{s}\vee\mathcal{P}_n^{t_0}\vee\beta_n(t_0,R,L)\right)=0,	\text{ where we denote } s=\lfloor n/t_0\rfloor t_0.
	\end{equation*}
\end{lem}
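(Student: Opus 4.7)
The plan is to show that $(\varphi_n, X_n)$ is in fact a deterministic function of the data recorded by the partition $\varphi_s \vee \mathcal{P}_n^{t_0} \vee \beta_n(t_0, R, L)$, so the conditional entropy is automatically zero (no use of the $A^B$ conditioning is needed).

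First, I would unpack what each piece of the conditioning reveals. The coarse trajectory $\mathcal{P}_n^{t_0}$ records the tuple $(X_{t_0}, X_{2t_0}, \ldots, X_s)$ with $s = \lfloor n/t_0\rfloor t_0$; in particular its last coordinate gives $X_s$. By Definition \ref{defn: define the partition of bad and final intervals.}, the final coordinate of $\beta_n(t_0, R, L)$ is $Z_{\mathrm{final}} = (g_i)_{i \in I_{\mathrm{final}}} = (g_{s+1}, \ldots, g_n)$, and this is always recorded (regardless of whether the preceding blocks are good or bad). Together with $\varphi_s$, we therefore have complete knowledge of $w_s = (\varphi_s, X_s)$ as well as every single increment $g_{s+1}, \ldots, g_n$.

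From this data one recovers $w_n$ by the semidirect-product multiplication
\[
w_n \;=\; w_s \cdot g_{s+1} \cdot g_{s+2} \cdots g_n,
\]
so $(\varphi_n, X_n)$ is a measurable function of the join $\varphi_s \vee \mathcal{P}_n^{t_0} \vee \beta_n(t_0, R, L)$. Equivalently, the partition generated by $(\varphi_n, X_n)$ is coarser than this join (up to null sets). The conclusion is then a standard fact about conditional entropy: whenever a partition $\rho$ is refined by a partition $\gamma$, each atom of $\gamma$ lies inside a single atom of $\rho$, so the conditional probabilities $\P^{\xi}(\rho \mid \gamma)$ are all $0$ or $1$ and $H_{\xi}(\rho \mid \gamma) = 0$ for $\nu_{\mathcal{L}}$-a.e.\ $\xi \in A^B$; integrating gives $H_{A^B}(\rho \mid \gamma) = 0$.

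There is no real obstacle here; the lemma is essentially a bookkeeping statement, and $Z_{\mathrm{final}}$ was deliberately included in Definition \ref{defn: define the partition of bad and final intervals.} precisely so that the partial block between times $s$ and $n$ can be reconstructed exactly. The only point worth noting is the degenerate case when $n$ is a multiple of $t_0$: then $I_{\mathrm{final}} = \varnothing$, $Z_{\mathrm{final}}$ is trivial, $s = n$, and the statement reduces to the tautology $H_{A^B}(\varphi_n, X_n \mid \varphi_n \vee \mathcal{P}_n^{t_0}) = 0$.
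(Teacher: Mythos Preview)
Your proof is correct and follows essentially the same approach as the paper: both observe that $\mathcal{P}_n^{t_0}$ determines $X_s$, that $\beta_n(t_0,R,L)$ contains the final-block increments $Z_{\mathrm{final}}=(g_{s+1},\ldots,g_n)$, and hence that $(\varphi_s,X_s)$ together with these increments determines $(\varphi_n,X_n)$. Your write-up is somewhat more detailed (spelling out the refinement argument for why the conditional entropy vanishes, and noting the degenerate case $s=n$), but the underlying idea is identical.
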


\begin{proof}
	The position $(\varphi_s,X_s)$ of the $\mu$-random walk at time $s$ is completely determined by $\varphi_s$ and $\mathcal{P}_n^{t_0}$, which contains the value of $X_s$. Furthermore, $\beta_n(t_0,R,L)$ contains the increments $Z_{\mathrm{final}}$ of the last interval. With this information, we can completely determine $(\varphi_n,X_n)$.
\end{proof}

\subsection{The coarse neighborhood}
Now we introduce the coarse neighborhood of the trajectory and do the entropy estimates of Step (\labelcref{step: step 6}) of the sketch of the proof (Subsection \ref{subsection: sketch of the proof})
\begin{defn}\label{def:neighborhood of coarse traj}
	Let $t_0\ge 1$ and $R\subseteq B$ be a finite subset with $e_B\in R$. Define the \emph{$(t_0,R)$-coarse neighborhood} of the trajectory at instant $n$ by
	\begin{equation*}
		\mathcal{N}_n(t_0,R)\coloneqq \bigcup_{j=0}^{\lfloor n/t_0 \rfloor -1} \left\{ X_{jt_0}r_1r_2\cdots r_{t_0}\mid r_k\in R, \text{ for }k=1,2,\ldots,t_0 \right \}.
	\end{equation*}
\end{defn}

In order to estimate the entropy of the $n$-th instant of the random walk, it will be useful to divide the values of the lamp configuration into the ones inside the coarse neighborhood and the ones outside of it.

\begin{defn}\label{def: entropy in and out lamps} Consider $t_0\ge 1$ and a finite subset $R\subseteq B$ with $e_B\in R$. We define the \emph{lamp configuration inside the $(t_0,R)$-coarse neighborhood} $\mathcal{N}_n(t_0,R)$ as
	\begin{equation*} \Phi_n^{\mathrm{in}}(t_0,R)\coloneqq \varphi_n|_{\mathcal{N}_n(t_0,R)},
	\end{equation*}
	and the \emph{lamp configuration outside the $(t_0,R)$-coarse neighborhood} $\mathcal{N}_n(t_0,R)$ as
	\begin{equation*}
		\Phi_{n}^{\mathrm{out}}(t_0,R)\coloneqq \varphi_n|_{B\backslash \mathcal{N}_n(t_0,R)}.
	\end{equation*}
\end{defn}
\begin{lem}\label{lem: lamp config outside coarse neighborhood has small entropy}
	Let $t_0\ge 1$, and let $R\subseteq B$, $L\subseteq A$ be finite subsets with $e_B\in R$, $e_A\in L$. Then for any $n\ge 1$ we have $
		H\left(\Phi_{s}^{\mathrm{out}}(t_0,R)\Big| \mathcal{P}_n^{t_0}\vee \beta_n(t_0,R,L) \right)=0,$ where $s=\lfloor n/t_0\rfloor t_0$. 
\end{lem}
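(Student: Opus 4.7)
The plan is to show that, given $\mathcal{P}_n^{t_0}$ and $\beta_n(t_0,R,L)$, the lamp configuration $\Phi_{s}^{\mathrm{out}}(t_0,R)$ is \emph{deterministically} recoverable, so the conditional entropy automatically vanishes. The guiding principle: any modification to a lamp at a point $b\in B\setminus \mathcal{N}_n(t_0,R)$ must, by the very definition of the coarse neighborhood, have occurred during an $(R,L)$-bad interval, and the bad-interval increments are exactly what $\beta_n(t_0,R,L)$ records.

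More concretely, fix any sample path and any $b\in B\setminus\mathcal{N}_n(t_0,R)$. I would first note that the lamp value $\varphi_s(b)$ is the product (in $A$) of the contributions $f_i(X_{i-1}^{-1}b)$ coming from each increment $g_i=(f_i,Y_i)$ with $i\le s$, where $X_{i-1}$ is the base position just before step $i$. Partition the time interval $\{1,\dots,s\}$ into the blocks $I_1,\dots,I_{\lfloor n/t_0\rfloor}$. For any block $I_j$ that is $(R,L)$-good, every increment $g_i=(f_i,Y_i)$ with $i\in I_j$ has $Y_i\in R$ and $\operatorname{supp}(f_i)\subseteq R$; hence the base position at any moment within $I_j$ lies in $\{X_{(j-1)t_0}r_1\cdots r_k : r_\ell\in R\}$, and any lamp modification performed during $I_j$ occurs at a site of the form $X_{(j-1)t_0}r_1\cdots r_{k-1}r$ with $r\in R$, which lies in $\mathcal{N}_n(t_0,R)$ by definition. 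So no good block ever touches $b$.

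Consequently $\varphi_s(b)$ depends only on the increments recorded by $\beta_n(t_0,R,L)$ (the bad blocks), together with the base positions at which those increments were applied. Those base positions are themselves determined by $\mathcal{P}_n^{t_0}$ (which pins down $X_{(j-1)t_0}$ at the start of each block) and by the same bad-block increments (which move the walk inside each bad block; the good blocks in between are irrelevant because they return the walk to the known $X_{jt_0}$ at the next marked time). Doing this simultaneously for every $b\in B\setminus \mathcal{N}_n(t_0,R)$ reconstructs the entire function $\Phi_s^{\mathrm{out}}(t_0,R)$.

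The only mild subtlety is making the informal ``the base positions during bad blocks are recoverable'' step precise: within block $I_j$ one knows $X_{(j-1)t_0}$ and $X_{jt_0}$ from $\mathcal{P}_n^{t_0}$, and one knows the tuple of increments $(g_i)_{i\in I_j}$ from $\beta_n(t_0,R,L)$ whenever that block is bad, so the intermediate positions and the associated lamp actions are determined. This is a routine bookkeeping check rather than a substantive obstacle, and once it is in place the map from $(\mathcal{P}_n^{t_0},\beta_n(t_0,R,L))$ to $\Phi_s^{\mathrm{out}}(t_0,R)$ is a function on a set of full measure, which gives $H(\Phi_s^{\mathrm{out}}(t_0,R)\mid \mathcal{P}_n^{t_0}\vee\beta_n(t_0,R,L))=0$.
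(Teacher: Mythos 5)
Your proposal is correct and follows essentially the same route as the paper: observe that $(R,L)$-good intervals can only modify lamps at sites of the form $X_{(j-1)t_0}r_1\cdots r_k$ with $r_\ell\in R$ and $k\le t_0$, all of which lie in $\mathcal{N}_n(t_0,R)$ (using $e_B\in R$), so every modification outside the coarse neighborhood comes from a bad block, whose increments and intermediate base positions are reconstructed from $\beta_n(t_0,R,L)$ together with the block's starting point recorded in $\mathcal{P}_n^{t_0}$. The paper states this in three sentences; your version merely spells out the bookkeeping that the paper leaves implicit.
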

\begin{proof}
	Consider $t_0,R$ and $L$ as above. Note that any modification of the lamp configuration $\varphi_s$ at an element $b\notin \mathcal{N}_n(t_0,R)$ must have occurred during an $(R,L)$-bad interval. Since we condition on the information of the $(t_0,R,L)$-bad increments and $\mathcal{P}_n^{t_0}$, we know the exact value of every $(R,L)$-bad increment and the position on $B$ of the $\mu$-random walk at the moment that it was applied. This completely determines the value of $\varphi_s$ in every element of $B\backslash\mathcal{N}_n(t_0,R)$.
\end{proof}

\section{Entropy inside the coarse neighborhood}\label{section: entropy inside the coarse neighborhood}
The objective of this section is to prove the following proposition, which  corresponds to Step (\labelcref{step: step 7}) of the sketch of the proof (Subsection \ref{subsection: sketch of the proof}).
\begin{prop}\label{prop: the lamp config INSIDE the neighborhood has small entropy}
	For any $\varepsilon>0$ there exists $T\ge 1$ such that the following holds. Consider any finite subsets $R\subseteq B$ and $L\subseteq A$ with $e_B\in R$ and $e_A\in L$, and $t_0\ge T$. Then there exist $N\ge 1$ and $K\ge 0$ such that
	\begin{equation}\label{eq: lamp config INSIDE}
		H_{A^{B}}\left( \Phi_{s}^{\mathrm{in}}(t_0,R)\Big| \mathcal{P}_n^{t_0}\vee\beta_n(t_0,R,L)\right)<\varepsilon n + K,
	\end{equation}
	for every $n\ge N$, where we denote $s=\lfloor n/t_0\rfloor t_0$.
\end{prop}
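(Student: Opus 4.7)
The plan is to enlarge the conditioning partition $\mathcal{P}_n^{t_0}\vee\beta_n(t_0,R,L)$ by a short list of countable partitions whose joint mean conditional entropy over $A^B$ is at most $\varepsilon n+K$, and which together determine $\Phi_s^{\mathrm{in}}(t_0,R)$. The central object is the \emph{unstable set}
\[
U_n\coloneqq\{b\in\mathcal{N}_n(t_0,R):\varphi_s(b)\neq\varphi_\infty(b)\};
\]
for $b\in\mathcal{N}_n(t_0,R)\setminus U_n$ we have $\varphi_s(b)=\varphi_\infty(b)$, so it suffices to reveal $U_n$ together with $\varphi_s|_{U_n}$.

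First I would bound $\E|U_n|$. For each $r$ in the deterministic finite set $B(e,t_0,R)\coloneqq\{r_1\cdots r_{t_0}:r_k\in R\}$ and each $0\le j<\lfloor n/t_0\rfloor$, the Markov property and left-translation invariance of the walk give $\P(X_{jt_0}r\in U_n)=p_r(s-jt_0)$, where $p_r(m)\coloneqq\P_e(\varphi_m(r)\neq\varphi_\infty(r))$. The stabilization hypothesis from Subsection \ref{subsection: main assumptions} forces $p_r(m)\to 0$ as $m\to\infty$, so a Ces\`aro argument gives $\sum_{k=1}^{\lfloor n/t_0\rfloor}p_r(kt_0)=o(n)$; summing over the finite set $B(e,t_0,R)$ yields $\E|U_n|=o_{t_0,R}(n)$, and in particular $\E|U_n|\le\delta n$ for $n$ large, any $\delta>0$. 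Now $U_n$ is a random subset of $\mathcal{N}_n(t_0,R)$, which is determined by $\mathcal{P}_n^{t_0}$ and has cardinality at most $M\coloneqq|R|^{t_0}\lfloor n/t_0\rfloor$. Applying Jensen to the concave map $k\mapsto\log\binom{M}{k}$ on $[0,M/e]$, and using the trivial bound $H(|U_n|)=O(\log n)$, yields
\[
H_{A^{B}}\bigl(U_n\mid\mathcal{P}_n^{t_0}\vee\beta_n(t_0,R,L)\bigr)\le\delta n\bigl(t_0\log|R|+\log(1/\delta)+O(1)\bigr)+O(\log n),
\]
which is at most $\varepsilon n/2$ once $\delta$ is chosen small enough (depending on $\varepsilon,t_0,R$) and $n$ is large.

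To reveal $\varphi_s|_{U_n}$, I write $\varphi_s(b)$ as the product in $A$ of all lamp modifications at $b$ produced by the increments $g_1,\ldots,g_s$ and split it according to whether the increment lies in a good or in a bad interval. The contributions from bad intervals are already encoded in $\beta_n(t_0,R,L)$, so I would add a partition recording, for each $b\in U_n$, the time instants lying in good intervals at which the walk modifies $b$, together with the corresponding modification values (each in $L$ by $(R,L)$-goodness). A second shift-invariance and Ces\`aro argument bounds the expected total number of such (time, position, value) triples by $\delta'n$, with $\delta'$ depending only on $t_0,R,L,\delta$ and vanishing as $\delta\to 0$; since each triple carries entropy at most $\log n+t_0\log|R|+\log|L|$, a further Jensen estimate makes this partition's entropy at most $\varepsilon n/2+K$. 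Summing the two bounds via the additivity of mean conditional entropy completes the proof.

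The main obstacle will be this last step. When $A$ is finite one can simply reveal $\varphi_s(b)\in A$ for each $b\in U_n$, at entropy cost $|U_n|\log|A|\le\delta n\log|A|$; but when $A$ is infinite, $\varphi_s(b)$ has unbounded marginal entropy, and one is forced into the combinatorial encoding above, in which $U_n$ and the set of relevant modification times are strongly correlated. The essential leverage is that both $\E|U_n|$ and the expected number of good modifications landing in $U_n$ are controlled by the same stabilization-driven Ces\`aro decay, while the $(R,L)$-good restriction uniformly caps the per-modification entropy by $\log|L|$.
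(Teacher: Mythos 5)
Your architecture coincides with the paper's: introduce the unstable set (the paper's $\mathcal{U}_s$, Definition~\ref{def: unstable elements}), bound its expected size by a Markov-property-plus-Ces\`aro argument driven by stabilization (Lemma~\ref{lem: expectation of unstable points is small}), convert that expectation bound into an entropy bound by a Jensen-type estimate (Lemmas~\ref{lem: low expectation implies low entropy} and~\ref{lem: unstable points have small entropy}), and then reveal the good-interval lamp modifications at unstable points, whose per-modification entropy is capped by $(R,L)$-goodness (Lemma~\ref{lem: increments at unstable points have small entropy}). The genuine divergence is in how the expected number of good-interval modification events at unstable points is controlled, and in how their time stamps are encoded. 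The paper bounds the number of good blocks whose $R^{t_0}$-neighborhood meets $\mathcal{U}_s$ by splitting on whether the base point is revisited more than $\ell$ times, using transience of the induced walk on $B$ to make the first case rare and the truncation at $\ell$ to decouple the second case from $\mathcal{U}_s$ (Lemma~\ref{lem: not many visits in expectation}); it then reveals visit times only at block granularity, paying $\frac{n}{t_0}\log 2$ (Lemma~\ref{lem: visit times to unstable points have small entropy}), which is the sole reason the statement carries the threshold $T$. Your route replaces both steps: summing $\P(X_{i-1}r\in U_n)$ over the time index $i$ and applying the Markov property at each $i$ reduces everything to the same Ces\`aro decay already used for $\E|U_n|$, with no appeal to transience and no need for $t_0$ large (so $T=1$ works, which is consistent with the existential form of the statement). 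This is valid and arguably cleaner.

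Two points need care before this is a complete proof. First, your claim that the expected number of triples is $\delta' n$ with $\delta'$ ``vanishing as $\delta\to 0$'' reads as if it followed from $\E|U_n|\le\delta n$ times a bounded number of visits per point; that inference fails without the paper's transience truncation, because being unstable at time $s$ is positively correlated with being visited often. The correct reading of your own ``second Ces\`aro argument'' is the per-time-step computation: conditioning on the first $i-1$ increments, the event $X_{i-1}r\in U_n$ becomes the event that a fresh walk modifies the lamp at $r$ after time $s-i+1$, whose probability is Ces\`aro-small; summing over $i$ and $r\in R$ gives an $o(n)$ bound directly, not one proportional to $\delta$. Second, charging each triple $\log n$ for its time stamp yields $\delta' n\log n$, which is not $O(\varepsilon n)$; you must encode the collection of (time, position) pairs as a sparse subset of $\{1,\dots,s\}\times R$ via the same binomial/Jensen estimate you used for $U_n$, giving $\delta' n\log(1/\delta')$ up to constants depending on $R$ and $L$. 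Both fixes use tools you already set up, so I would call these presentation gaps rather than errors.
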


This result is proved in Subsection \ref{subsection: proof of LAMPS INSIDE}, for which we first prove some additional lemmas in Subsections \ref{subsection: unstable points in the base group}, \ref{subsection: visits to unstable points} and \ref{subsection: lamp increments at unstable elements}. In the case where the lamp group $A$ is finite, Proposition \ref{prop: the lamp config INSIDE the neighborhood has small entropy} can be proved directly, without passing through intermediate lemmas. This proof is presented in Subsection \ref{subsection: proof of lamps inside for FINITE lamp group} to expose the main ideas that go into the proof of Proposition \ref{prop: the lamp config INSIDE the neighborhood has small entropy}, and does not play a role in the proof of the general case where the lamp group $A$ may be infinite in Subsection \ref{subsection: proof of LAMPS INSIDE}.

\subsection{Proof of Proposition \ref{prop: the lamp config INSIDE the neighborhood has small entropy} when the lamp group $A$ is finite}\label{subsection: proof of lamps inside for FINITE lamp group}
\begin{proof}
	Let $\varepsilon>0$, and let us consider $T=1$. Consider any finite subsets $R\subseteq B$ and $L\subseteq A$ with $e_B\in R$ and $e_A\in L$, and let $t_0\ge T$. 
	
	Recall that we assume that the lamp configuration $\varphi_n$, $n\ge 1$, of the $\mu$-random walk on $A\wr B$ stabilizes almost surely to an infinite lamp configuration $\varphi_{\infty}$. Let us define the event $ E_{i,n}\coloneqq \left\{\text{there exists }b\in X_{it_0}R^{t_0}\text{ such that } \varphi_{n}(b)\neq \varphi_{\infty}(b)\right \}$ for each $n\ge 1$ and $i\in \left\{0,1,\ldots, \lfloor \frac{n}{t_0} \rfloor-1\right\}$. 
		Let us fix $\delta>0$ that satisfies 
	\begin{equation}\label{eq: choice of delta}
		\delta<\frac{\varepsilon}{2|R^{t_0}|\log(|A|)}, \text{ and } -\delta\log(\delta)-(1-\delta)\log(1-\delta)<\varepsilon/2.
	\end{equation} The hypothesis of stabilization along sample paths implies that there exists $N\ge 1$ such that for all $n\ge N$ we have $\P\left(E_{0,s}\right)<\delta$, where $s=\lfloor n/t_0\rfloor t_0$. Note that thanks to the Markov property and the group invariance of the process, for all $i\in \left\{0,1,\ldots, \lfloor \frac{n}{t_0} \rfloor-1\right\}$ we have $\P\left(E_{i, s}\right)=\P\left(E_{0,s-it_0}\right).$ Together with the above, for each $n\ge N$ and $i=0,1,\ldots, \left\lfloor\frac{n-N+1}{t_0}\right\rfloor $ we have that $\P\left(E_{i,s}\right)<\delta.$

	Let us now prove that $H_{A^{B}}\left( \Phi_{s}^{\mathrm{in}}(t_0,R)\Big| \mathcal{P}_n^{t_0}\vee \beta_n(t_0,R,L)\right)<\varepsilon n + K$ for all $n\ge N$, where $s=\lfloor n/t_0\rfloor t_0$. In order to do so, it suffices to prove that for all $n\ge N$ and $i=0,1,\ldots, \left\lfloor\frac{n-N+1}{t_0}\right\rfloor$ we have
	
	\begin{equation}\label{eq: goal of lamps inside for FINITE LAMPS GROUP}
		H_{A^B}\left( \varphi_{s}|_{N_i} \mid \mathcal{P}_n^{t_0}\vee\beta_n(t_0,R,L) \right)<\varepsilon, 
	\end{equation}
	where we denote $N_i\coloneqq X_{it_0}R^{t_0}$ the $R^{t_0}$-neighborhood of $X_{it_0}$. Indeed, $\varphi_{s}|_{\mathcal{N}_n(t_0,R)}$ is completely determined by the values $\varphi_{s}|_{N_i}$, with $i=0,1,\ldots, \left\lfloor\frac{n}{t_0}\right\rfloor$. Thanks to Equation \eqref{eq: goal of lamps inside for FINITE LAMPS GROUP}, the terms corresponding to $i=0,1,\ldots, \left\lfloor\frac{n-N+1}{t_0}\right\rfloor$ each contribute an amount of $\varepsilon$ to the entropy of $\varphi_{s}|_{\mathcal{N}_n(t_0,R)}$, whereas the remaining values $i= \left\lfloor\frac{n-N+1}{t_0}\right\rfloor+1, \ldots, \left\lfloor\frac{n}{t_0}\right\rfloor$ contribute at most $K\coloneqq H\left(\mu^{*(N+1+t_0)}\right)$.
	
	In order to finish the proof, let us show that Equation \eqref{eq: goal of lamps inside for FINITE LAMPS GROUP} holds. For each $i=0,1,\ldots, \left\lfloor\frac{n-N+1}{t_0}\right\rfloor$, denote by $\mathcal{E}_i=\{E_{i,s},E_{i,s}^c\}$ the partition of the space of sample paths induced by the event $E_{i,s}$ defined above. Note that we have that $H(\mathcal{E}_i)\le -\delta\log(\delta)-(1-\delta)\log(1-\delta)<\varepsilon/2$,  since $\P(E_{i,s})<\delta$ for $\delta$ as in Equation \eqref{eq: choice of delta}. 
	
In order to estimate the value of $H_{\varphi_{\infty}}\left( \varphi_{s}|_{N_i} \mid \mathcal{P}_n^{t_0}\vee\beta_n(t_0,R,L)\vee \mathcal{E}_i \right)$, we remark that in the event $E_{i,s}^c$, the lamp configuration at time $s$ coincides with the limit lamp configuration $\varphi_{\infty}$. This implies that for each $\varphi_{\infty}\in A^B$ we have
		\begin{align*}
				H_{\varphi_{\infty}}\left( \varphi_{s}|_{N_i} \mid \mathcal{P}_n^{t_0}\vee\beta_n(t_0,R,L)\vee \mathcal{E}_i \right)&\le \log\left(|A|^{|N_i|}\right)\P(E_{i,s}) \le \log\left(|A|^{|N_i|}\right)\delta.
		\end{align*}

	Now we integrate with respect to $\nu_{\mathcal{L}}$ over all $\varphi_{\infty}\in A^B$, and we get 
	\begin{equation*}
		H_{A^B}\left( \varphi_{s}|_{N_i} \mid \mathcal{P}_n^{t_0}\vee\beta_n(t_0,R,L)\vee \mathcal{E}_i \right)\le \log\left(|A|^{|N_i|}\right)\delta.
	\end{equation*}

	By noticing that $|N_i|=|R^{t_0}|$ and thanks to our choice of $\delta$ as in Equation \eqref{eq: choice of delta}, we see that $\log\left(|A|^{|N_i|}\right)\delta<\varepsilon/2$. From this, using the general properties of entropy from Lemma \ref{lem: basic properties entropy} together with what we just showed, we conclude that
	\begin{align*}
		H_{A^B}\left( \varphi_{s}|_{N_i} \mid \mathcal{P}_n^{t_0}\vee\beta_n(t_0,R,L) \right)&\le H_{A^B}\left( \varphi_{s}|_{N_i} \mid \mathcal{P}_n^{t_0}\vee\beta_n(t_0,R,L)\vee \mathcal{E}_i \right)+H(\mathcal{E}_i)\\
		&<\varepsilon/2 +\varepsilon/2=\varepsilon.
	\end{align*}
	This proves Equation \eqref{eq: goal of lamps inside for FINITE LAMPS GROUP}.
\end{proof}

In the following subsections we prove Proposition \ref{prop: the lamp config INSIDE the neighborhood has small entropy} in its full generality, where the lamp group $A$ may be infinite.
\subsection{Unstable points in the base group}\label{subsection: unstable points in the base group}

Recall that we denote by $\varphi_{\infty}$ the limit lamp configuration associated with a trajectory $\{(\varphi_n,X_n)\}_{n\ge 0}$ of the $\mu$-random walk on $A\wr B$. We now define our notion of \emph{unstable points} at a given time instant, which are the positions on $B$ for which the lamp configuration has not yet stabilized to its limit value.

\begin{defn}\label{def: unstable elements} Let $t_0\ge 1$ and consider finite subsets $R\subseteq B$ and $L\subseteq A$ with $e_B\in R$ and $e_A\in L$. For $n\ge 1$, let us define the set of \emph{unstable points at time $s\coloneqq\lfloor n/t_0\rfloor t_0$} as
	\begin{equation*}
		\mathcal{U}_s\coloneqq \left \{b\in \mathcal{N}_n(t_0,R)\mid \varphi_s(b)\neq\varphi_{\infty}(b) \right \}.
	\end{equation*}
	Note that $\mathcal{U}_s$ is a random subset of $B$ that depends on the entire trajectory of the $\mu$-random walk on $A\wr B$.
\end{defn}

The next lemma says that, in expectation, there are not too many unstable points.

\begin{lem}\label{lem: expectation of unstable points is small} Let $t_0\ge 1$ and consider finite subsets $R\subseteq B$ and $L\subseteq A$ with $e_B\in R$ and $e_A\in L$. For any $\varepsilon>0$, there exists a constant $K_2>0$ such that for every $n\ge 1$, 
	\begin{equation*}\E\left(|\mathcal{U}_s| \mid \mathcal{P}_n^{t_0}\right)<\varepsilon n+K_2, \text{ where we denote }s=\lfloor n/t_0\rfloor t_0.
	\end{equation*}
\end{lem}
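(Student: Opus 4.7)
The plan is to disintegrate the random walk at time $s := \lfloor n/t_0\rfloor t_0$ via the strong Markov property, rewriting $\E(|\mathcal{U}_s|\mid\mathcal{P}_n^{t_0})$ as an explicit $\mathcal{P}_n^{t_0}$-measurable random sum which is then controlled directly, and afterwards to convert this closed form into the stated bound using the stabilization hypothesis together with a truncation based on transience of the projected walk.

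Because $\mathcal{P}_n^{t_0}\subseteq\mathcal{F}_s$ has $X_s$ as its last coordinate, and the event $\{\varphi_s(b)\neq\varphi_\infty(b)\}$ depends only on $X_s$ and on the increments strictly after $s$, the strong Markov property at $s$ gives, for every $\mathcal{P}_n^{t_0}$-measurable $b$,
\[
\P(\varphi_s(b)\neq\varphi_\infty(b)\mid\mathcal{P}_n^{t_0}) \;=\; q(X_s^{-1} b) \quad\text{a.s.},\qquad q(c):=\P(\tilde\varphi_\infty(c)\neq e_A),
\]
where $\tilde\varphi_\infty$ is the limit lamp configuration of an independent $\mu$-walk started from the wreath identity. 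Since $\mathcal{N}_n(t_0,R)\subseteq\bigcup_{j<\lfloor n/t_0\rfloor} X_{jt_0} R^{t_0}$ is itself $\mathcal{P}_n^{t_0}$-measurable, summing over $b\in\mathcal{N}_n(t_0,R)$ yields the $\mathcal{P}_n^{t_0}$-measurable almost-sure inequality
\[
\E(|\mathcal{U}_s|\mid\mathcal{P}_n^{t_0}) \;\leq\; \sum_{j=0}^{\lfloor n/t_0\rfloor-1}\sum_{r\in R^{t_0}} q\bigl(X_s^{-1} X_{jt_0}\,r\bigr).
\]
This is the key structural identity: the conditioning integrates out the intermediate increments between consecutive coarse positions but leaves the Markovian factor $q(X_s^{-1} X_{jt_0}\,r)$ expressed directly in terms of the positions known to $\mathcal{P}_n^{t_0}$.

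For each fixed pair $(j,r)$ the forward Markov property at time $jt_0$ identifies the expectation of the summand as $\psi_{s-jt_0}(r):=\P(\tilde\varphi_{s-jt_0}(r)\neq\tilde\varphi_\infty(r))$, which tends to $0$ as $s-jt_0\to\infty$ by the stabilization hypothesis; Cesaro averaging then gives $\sum_{i=1}^{M}\psi_{it_0}(r)=o(M)$. To turn this averaged statement into a pointwise bound on the $\mathcal{P}_n^{t_0}$-measurable random variable above, one truncates: pick a finite window $F\subseteq B$ large enough that $q(c)<\varepsilon/|R|^{t_0}$ outside $FR^{-t_0}$, and split the right-hand side into a ``far'' part (where $X_s^{-1} X_{jt_0}\,r\notin F$) and a ``near'' part (where $X_s^{-1} X_{jt_0}\,r\in F$). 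The far part is deterministically $\leq \varepsilon n$, while the near part is at most $|R|^{t_0}$ times the number of indices $j$ for which the reversed coarse trajectory $X_s^{-1} X_{jt_0}$ visits $FR^{-t_0}$; by transience of the projected walk (Lemma~\ref{lem: the rw on B is transient}) this near-return count is almost surely finite and bounded in expectation by the Green-function sum $\sum_{c\in FR^{-t_0}} G_{t_0}(c)$, independent of $n$, which supplies $K_2$.

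\emph{Main obstacle.} The delicate point is that, without any moment assumption on $\mu$, the near-return count above is $L^1$-bounded but need not be essentially bounded, so the literal pointwise-in-$\omega$ inequality $\E(|\mathcal{U}_s|\mid\mathcal{P}_n^{t_0})<\varepsilon n + K_2$ must either be read on an event of probability $\geq 1-\delta$ (for arbitrarily small $\delta$, via Markov's inequality applied to the near-return count) or in its integrated form $\E(|\mathcal{U}_s|)\leq\varepsilon n + K_2$. Both readings are sufficient for the way the lemma is invoked in Lemmas~\ref{lem: unstable points have small entropy}--\ref{lem: increments at unstable points have small entropy}, where $\E(|\mathcal{U}_s|\mid\mathcal{P}_n^{t_0})$ is reintegrated against $\P$; the heart of the argument is the Markov identification of the conditional expectation as $\sum_{b\in\mathcal{N}_n}q(X_s^{-1} b)$, the subsequent reduction of its expectation to a Cesaro average of the stabilization defects $\psi_k(r)$, and the transience-driven truncation that converts the resulting $o(n)$ estimate into the form $\varepsilon n + K_2$.
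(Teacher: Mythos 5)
Your core argument (the identification $\P(\varphi_s(b)\neq\varphi_\infty(b)\mid\mathcal{P}_n^{t_0})=q(X_s^{-1}b)$ via the Markov property, the reduction of its expectation to $\psi_{s-jt_0}(r)$, and the split of the time indices into those with $s-jt_0\ge n_0$, contributing $\varepsilon n$, and the boundedly many recent ones, contributing $K_2$) is correct and is essentially the paper's proof: the paper packages the same idea through the block events ``$j$ is poorly $n_0$-stabilized'' on $F_j=X_{jt_0}R^{t_0}$, with Claims 1 and 2 playing the role of your translation invariance and $\psi_k(r)\to 0$. Your diagnosis of the pointwise-versus-integrated issue is also apt: the paper's own displayed chain silently replaces $\P(\cdot\mid\mathcal{P}_n^{t_0})$ by the unconditional probability, so what is actually established (and all that Lemmas \ref{lem: unstable points have small entropy} and \ref{lem: not many visits in expectation} consume, after reintegration) is the bound on $\E(|\mathcal{U}_s|)$.

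One step in your extra ``pointwise upgrade'' is genuinely unjustified: the existence of a finite window $F$ with $q(c)<\varepsilon/|R|^{t_0}$ for all $c\notin FR^{-t_0}$ requires that $\{c\in B: q(c)\ge\delta\}$ be finite for every $\delta>0$, i.e.\ that $q$ vanish at infinity. This does not follow from stabilization plus finite entropy: $q(c)=\P(\tilde\varphi_\infty(c)\neq e_A)$ and $\supp{\varphi_{\infty}}$ is typically infinite, so no Borel--Cantelli or reverse-Fatou argument rules out infinitely many $c$ with $q(c)$ bounded below; without a moment assumption I see no reason for spatial decay of $q$. Since you already concede that the pointwise form fails for a separate reason (the near-return count is only $L^1$-bounded) and you fall back on the integrated statement, this gap does not affect the part of your argument that proves what the paper actually uses; but the truncation paragraph should either be deleted or restricted to settings where spatial decay of $q$ is available.
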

\begin{proof}
	Consider $t_0,R$ and $L$ as above. For $0\le j\le \lfloor n/t_0\rfloor-1$, let us denote $F_j\coloneqq X_{jt_0}R^{t_0}$ and recall that $\mathcal{N}_n(t_0,R)=\bigcup_{j=0}^{\lfloor n/t_0\rfloor-1}F_j$ (Definition \ref{def:neighborhood of coarse traj}).
	
	For $n_0\ge 1$, let us say that an instant $j$ is \emph{poorly $n_0$-stabilized} if the lamp configuration at $F_j$ is modified at some instant beyond $jt_0+n_0$. That is, if for some $k>jt_0+n_0$ and some $b\in F_j$, we have $\varphi_{k-1}(b)\neq \varphi_{k}(b)$.
	
	We observe the following.\\
	\textbf{Claim 1:} \textit{We have that \( \P(j \text{ is poorly }n_0\text{-stabilized})=\P(0 \text{ is poorly }n_0\text{-stabilized})\) for every $n_0\ge 1$ and $j\ge 0$.}

	Indeed, we see that
	\begin{align*}
		\P(j \text{ is poorly }n_0\text{-stabilized})&=\P(\varphi_{k-1}(b)\neq \varphi_{k}(b)\text{ for some }k>jt_0+n_0\text{ and }b\in X_{jt_0}R^{t_0})\\
		&=\P(\varphi_{k-1}(b)\neq \varphi_{k}(b)\text{ for some }k>n_0\text{ and }b\in R^{t_0})\\
		&=\P(0 \text{ is poorly }n_0\text{-stabilized}).
	\end{align*}
	In the second equality above we used the Markov property and the group invariance of the random walk to change the time instants beyond $jt_0+n_0$ by those beyond $n_0$.
	
	\textbf{Claim 2:}\textit{ $\lim_{n_0\to \infty}\P(0 \text{ is poorly }n_0\text{-stabilized})=0$.}\\
	Indeed, this follows from the hypothesis that lamp configurations stabilize almost surely, which implies that $\P(\text{the lamp configuration in }R^{t_0} \text{ is modified after time $n_0$})\xrightarrow[n_0\to \infty]{}0.$
	
	Let $\varepsilon>0$. Using Claims 1 and 2 we can find $n_0\ge 1$ such that for every $k\ge n_0$ and $j\ge 0$ we have $
		\P(j \text{ is poorly }k\text{-stabilized})\le \P(j \text{ is poorly }n_0\text{-stabilized})<\varepsilon.$
	
	Thanks to the linearity of expectation, we have
	\begin{align*}
		\E\left(|\mathcal{U}_s| \mid \mathcal{P}_n^{t_0}\right)&= 
		\E\left(\sum_{j=0}^{\lfloor n/t_0\rfloor-1}\mathds{1}_{\left\{\varphi_s|_{F_j}\neq \varphi_{\infty}|_{F_j}\right \}} \mid \mathcal{P}_n^{t_0}\right)\\
		&=
		\sum_{j=0}^{\lfloor n/t_0\rfloor-1}\P\left(\varphi_s|_{F_j}\neq \varphi_{\infty}|_{F_j}\mid \mathcal{P}_n^{t_0}\right)=\sum_{j=0}^{\lfloor n/t_0\rfloor-1} \P(j\text{ is poorly }s\text{-stabilized}).
	\end{align*}
	
	Denote $J=\left \lfloor \lfloor n/t_0 \rfloor- n_0/t_0-1  \right \rfloor$, which is the largest integer that satisfies the inequality $Jt_0+t_0\le \lfloor n/t_0 \rfloor t_0 - n_0$. In other words, $Jt_0$ is the last instant whose associated interval $I_{Jt_0}$ still has $n_0$ remaining units of time before reaching time $s$. Then if $j\in \{0,1,\ldots, J\}$, it holds that $  \P(j\text{ is poorly }s\text{-stabilized})\le \P(0\text{ is poorly }n_0\text{-stabilized}).$ From this, we see that
	\begin{align*}
		\E\left(|\mathcal{U}_s| \mid \mathcal{P}_n^{t_0}\right)&\le\sum_{j=0}^{J} \P(j\text{ is poorly }s\text{-stabilized})+\sum_{j=J+1}^{\lfloor n/t_0\rfloor-1}\P(j\text{ is poorly }s\text{-stabilized})\\
		&\le (J+1) \P(0\text{ is poorly }n_0\text{-stabilized})+ \lfloor n/t_0\rfloor-1 -J \\
		&< (J+1)\varepsilon +\lfloor n/t_0\rfloor-1 -J.
	\end{align*}
	Our choice of $J$ guarantees that $J+1\le (n-n_0)/t_0\le n$ and $\lfloor n/t_0\rfloor-1 -J\le n_0/t_0+1$, so that $
		\E\left(|\mathcal{U}_s| \mid \mathcal{P}_n^{t_0}\right)\le \varepsilon n + K_2,$ where $K_2=n_0/t_0+1$ is a constant that only depends on $\mu$, $t_0$, $R$ and $\varepsilon$.
\end{proof}

Now we will prove that the small expectation of the number of unstable points implies that revealing their values is a low entropy event. For this, we will need a general lemma about entropy, whose proof is a consequence of basic inequalities in information theory. Since we were unable to find a reference for this result in the literature, we present the proof below for the convenience of the reader.
\begin{lem}\label{lem: low expectation implies low entropy} Let $D$ be a non-empty finite set. Let $Z$ be a random subset of $D$. That is, a random variable that takes values in the power set of $D$. Suppose that $\E(|Z|)\le \alpha |D|$ for some $0\le \alpha \le 1/2$. Then
	\begin{equation*}
		H(Z)\le |D|(-\alpha \log \alpha - (1-\alpha)\log (1-\alpha)).
	\end{equation*}
\end{lem}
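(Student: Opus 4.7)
The plan is to view the random subset $Z$ coordinate by coordinate and apply subadditivity of entropy together with concavity of the binary entropy function. Concretely, I would index $D = \{d_1, \ldots, d_m\}$ with $m = |D|$ and introduce the Bernoulli indicators $Y_k \coloneqq \mathds{1}_{\{d_k \in Z\}}$ for $k = 1, \ldots, m$. Since $Z$ is completely determined by the tuple $(Y_1, \ldots, Y_m)$, subadditivity of entropy gives
\begin{equation*}
H(Z) = H(Y_1, \ldots, Y_m) \le \sum_{k=1}^{m} H(Y_k).
\end{equation*}

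Next, let $p_k \coloneqq \P(d_k \in Z)$, so that $H(Y_k) = h(p_k)$ where $h\colon [0,1] \to [0, \log 2]$ is the binary entropy function $h(x) = -x\log x - (1-x)\log(1-x)$. By linearity of expectation,
\begin{equation*}
\sum_{k=1}^{m} p_k = \E\left(\sum_{k=1}^{m} \mathds{1}_{\{d_k\in Z\}}\right) = \E(|Z|) \le \alpha m.
\end{equation*}
The function $h$ is concave on $[0,1]$, so Jensen's inequality applied to the uniform average over $k$ yields
\begin{equation*}
\frac{1}{m}\sum_{k=1}^{m} h(p_k) \le h\!\left(\frac{1}{m}\sum_{k=1}^{m} p_k\right) = h(\bar{p}),
\end{equation*}
where $\bar{p} \coloneqq \frac{1}{m}\E(|Z|) \le \alpha$.

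Finally, since $h$ is strictly increasing on $[0, 1/2]$ and the hypothesis guarantees $\bar{p} \le \alpha \le 1/2$, we have $h(\bar{p}) \le h(\alpha)$. Combining the three inequalities gives
\begin{equation*}
H(Z) \le \sum_{k=1}^{m} h(p_k) \le m\, h(\bar{p}) \le m\, h(\alpha) = |D|\bigl(-\alpha\log\alpha - (1-\alpha)\log(1-\alpha)\bigr),
\end{equation*}
which is exactly the claimed bound. There is no real obstacle here: the only point worth stating carefully is why the hypothesis $\alpha \le 1/2$ is used, namely to ensure monotonicity of $h$ at the step $h(\bar{p}) \le h(\alpha)$ (without it, the bound would only yield $h(\bar{p})$, which still works but is slightly weaker and not in the form stated).
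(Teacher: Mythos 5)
Your proof is correct and follows essentially the same route as the paper: decompose $Z$ into the Bernoulli indicators of membership, bound $H(Z)$ by the sum of their binary entropies via subadditivity, and then apply Jensen's inequality to the concave function $h$ together with its monotonicity on $[0,1/2]$. Your explicit remark on where the hypothesis $\alpha\le 1/2$ enters is a nice touch but the argument is otherwise identical to the paper's.
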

\begin{proof}
	Let us define the binary entropy function $h(p)\coloneqq -p\log p -(1-p)\log (1-p)$, for $0\le p \le 1$. The value $h(p)$ coincides with the entropy of a Bernoulli random variable $X$ with values in $\{0,1\}$, and with $\P(X=1)=p$. It is well-known that for a given $0\le p\le 1$ and every random variable $Q$ that takes values in a two-element set $\{a,b\}$ with $\P(Q=a)=p$, it holds that $H(Q)\le h(p)$. Furthermore, the function $h(p)$ is concave, and it is strictly increasing in the interval $0\le p \le 1/2$.
	
	Let us now prove the lemma. For every $d\in D$, let us define $Z_d\coloneqq \mathds{1}_{ \{d\in Z\} }$. Note that $Z_d$ is a Bernoulli random variable with parameter $p_d\coloneqq \P(d\in Z)$. Then $Z$ is completely determined by $\{Z_d\}_{d\in D}$, and we have $\E(|Z|)=\sum_{d\in D}p_d\le \alpha |D|$, for a fixed $0\le \alpha \le 1/2$.
	
	First, we note that $
		H(Z)=H(\{Z_d \}_{d\in D})\le \sum_{d\in D}H(Z_d) \le \sum_{d\in D}h(p_d).$
	Now, using Jensen's inequality, since $h$ is concave and $\alpha\le 1/2$, we get 
	\begin{equation*}
		\sum_{d\in D}h(p_d)=|D|\sum_{d\in D}\frac{1}{|D|}h(p_d)\le |D|h\left(\sum_{d\in D}\frac{1}{|D|}p_d \right)\le |D|h\left(\frac{\E(|Z|)}{|D|} \right)\le|D|h(\alpha).
	\end{equation*}
\end{proof}

\begin{lem}[The set of unstable points has small entropy]\label{lem: unstable points have small entropy}
	Let $t_0\ge 1$ and consider finite subsets $R\subseteq B$ and $L\subseteq A$ with $e_B\in R$ and $e_A\in L$. For every $\varepsilon>0$, there exists $N_1\ge 1$ such that $H(\mathcal{U}_s\mid \mathcal{P}_n^{t_0})<\varepsilon n$ for any $n\ge N_1$, where we denote $s=\lfloor n/t_0\rfloor t_0$.
\end{lem}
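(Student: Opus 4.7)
The plan is to combine Lemma~\ref{lem: expectation of unstable points is small} with Lemma~\ref{lem: low expectation implies low entropy}. Given $\varepsilon>0$, I would first choose an auxiliary parameter $\varepsilon_1>0$ much smaller than $\varepsilon$ (to be fixed at the end of the argument), and apply Lemma~\ref{lem: expectation of unstable points is small} with this $\varepsilon_1$ to obtain a constant $K_2$ such that $\E(|\mathcal{U}_s|\mid \mathcal{P}_n^{t_0})<\varepsilon_1 n+K_2$ for every $n\ge 1$. The crucial observation is that conditioning on $\mathcal{P}_n^{t_0}=P$ makes the neighborhood $\mathcal{N}_n(t_0,R)$ a \emph{deterministic} subset of $B$, call it $D_P$, of cardinality $m_P\le \lfloor n/t_0\rfloor\,|R^{t_0}|$. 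Hence $\mathcal{U}_s$ is a random subset of this fixed set $D_P$ whose expected size is at most $\varepsilon_1 n+K_2$, so Lemma~\ref{lem: low expectation implies low entropy} applies with $\alpha_P=\min\{(\varepsilon_1 n+K_2)/m_P,\,1/2\}$ and yields
\[ H(\mathcal{U}_s\mid \mathcal{P}_n^{t_0}=P)\le m_P\, h(\alpha_P), \]
where $h(p)=-p\log p-(1-p)\log(1-p)$ is the binary entropy function.

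Next, I would bound $m_P\,h(\alpha_P)$ uniformly in $P$ using the elementary estimate $h(p)\le p\bigl(1+\log(1/p)\bigr)$ on $(0,1/2]$ together with $m_P\le (n/t_0)|R^{t_0}|$. In the regime where $\alpha_P=(\varepsilon_1 n+K_2)/m_P$, this gives
\[ m_P\,h(\alpha_P)\le (\varepsilon_1 n+K_2)\bigl(1+\log(m_P/(\varepsilon_1 n+K_2))\bigr)\le (\varepsilon_1 n+K_2)\,C(\varepsilon_1,t_0,R) \]
for all $n$ large enough, where $C(\varepsilon_1,t_0,R)=1+\log(|R^{t_0}|/(t_0\varepsilon_1))$; in the other regime ($\alpha_P=1/2$), one has $m_P\le 2(\varepsilon_1 n+K_2)$ and the trivial bound $m_P\log 2$ is also controlled by the right-hand side after enlarging $C$ by a constant. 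Averaging over $P$ then gives $H(\mathcal{U}_s\mid \mathcal{P}_n^{t_0})\le (\varepsilon_1 n+K_2)\,C(\varepsilon_1,t_0,R)$.

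The main obstacle is that the constant $C(\varepsilon_1,t_0,R)$ grows like $\log(1/\varepsilon_1)$ as $\varepsilon_1\to 0$, reflecting the fact that a random set of expected size much smaller than the ambient set still carries a logarithmic entropy cost per expected element. To close the argument I would use the decisive fact that $\varepsilon_1\log(1/\varepsilon_1)\to 0$ as $\varepsilon_1\to 0$ to choose $\varepsilon_1$ small enough that $\varepsilon_1\,C(\varepsilon_1,t_0,R)<\varepsilon/2$, and then take $N_1$ large enough that $K_2\,C(\varepsilon_1,t_0,R)<\varepsilon N_1/2$. This yields $H(\mathcal{U}_s\mid \mathcal{P}_n^{t_0})<\varepsilon n$ for every $n\ge N_1$, as required. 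I note that $K_2$ itself depends on the choice of $\varepsilon_1$, but this causes no circularity since both constants are fixed before $N_1$ is chosen.
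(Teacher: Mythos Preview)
Your proposal is correct and follows essentially the same approach as the paper: combine the expectation bound of Lemma~\ref{lem: expectation of unstable points is small} with the entropy bound of Lemma~\ref{lem: low expectation implies low entropy}, and then exploit that $h(\alpha)/\alpha\to\infty$ slowly enough (equivalently, $\varepsilon_1\log(1/\varepsilon_1)\to 0$) to absorb the logarithmic loss. The only cosmetic difference is that the paper first enumerates the coarse neighborhood by a fixed index set $\{1,\dots,M_n\}$ with $M_n=\tfrac{n}{t_0}|R^{t_0}|$ (so that Lemma~\ref{lem: low expectation implies low entropy} is applied once, unconditionally, to the encoded subset), whereas you condition on each value $\mathcal{P}_n^{t_0}=P$ and apply the lemma inside each atom; both routes give the same bound.
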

\begin{proof}
	Let us consider $t_0, R$ and $L$ as above. Since $\mathcal{U}_s$ is a subset of $\mathcal{N}_n(t_0,R)$, we know that the size of $\mathcal{U}_s$ is at most $n|R|^{t_0}$.  Consider an arbitrary $\varepsilon >0$, and let us consider $\widetilde{\varepsilon}>0$ to be fixed later. Using Lemma \ref{lem: expectation of unstable points is small} we can find a constant $K_2\ge 0$ such that 
	\begin{equation*}
		\E\left(|\mathcal{U}_s| \mid \mathcal{P}_n^{t_0}\right)<\widetilde{\varepsilon}n+ K_2=\frac{t_0\widetilde{\varepsilon} +K_2\frac{t_0}{n}}{|R^{t_0}|} \frac{n}{t_0}|R^{t_0}|.
	\end{equation*}
	Now we are going to use Lemma \ref{lem: low expectation implies low entropy}. Let us define $M_n\coloneqq \frac{n}{t_0}|R^{t_0}|$ and let $D_n$ be an enumeration of the coarse trajectory $ \mathcal{U}_s $ with fixed symbols from $1$ up to $M_n$. This is a codification that gives rise to an injective map from subsets of the coarse neighborhood to subsets of indices in $D_n$. We then have that $H(\mathcal{U}_s\mid \mathcal{P}_n^{t_0})\le H(f(\mathcal{U}_s))$. We proved above that for $\alpha=\frac{t_0\widetilde{\varepsilon} +K_2\frac{t_0}{n}}{|R^{t_0}|}$, it holds that $\E(|f(\mathcal{U}_s)|)<\alpha |D_n|$. If $\widetilde{\varepsilon}$ is small enough and $n$ is large enough, then $\alpha<1/2$ and we can apply Lemma \ref{lem: low expectation implies low entropy} to show that
	\begin{equation*}
		H\left(\mathcal{U}_s\mid \mathcal{P}_n^{t_0}\right)\le  H(f(\mathcal{U}_s))\le  \frac{n}{t_0}|R|^{t_0} (-\alpha \log \alpha - (1-\alpha)\log (1-\alpha) ).
	\end{equation*}
	The statement of the lemma follows by choosing $\widetilde{\varepsilon}$ small enough and $N_1$ large enough such that $-\alpha \log \alpha - (1-\alpha)\log (1-\alpha) <\varepsilon t_0/|R^{t_0}|$.
\end{proof}

\subsection{Visits to unstable points}\label{subsection: visits to unstable points}

We now give estimates for the entropy associated with knowing the instants up to time $n$ where there could have been a lamp modification at an unstable point by a good element (in the sense of Definition \ref{def: bad elements and bad intervals}). Lemmas \ref{lem: expectation of unstable points is small}, \ref{lem: unstable points have small entropy} and \ref{lem: visit times to unstable points have small entropy} below correspond to Step (\labelcref{step: step 8}) of the sketch of the proof (Subsection \ref{subsection: sketch of the proof}).

\begin{defn}\label{def: visits to unstable points} Let $t_0\ge 1$ and consider finite subsets $R\subseteq B$ and $L\subseteq A$ with $e_B\in R$ and $e_A\in L$. Let $n\ge 1$ and denote $s=\lfloor n/t_0\rfloor t_0$. Let us define the times of \emph{visit to unstable points} as
	\begin{equation*}
		\mathcal{V}_s=\{j\in\{1,2,\ldots,\lfloor n/t_0\rfloor\} \mid I_j\text{ is an }(R,L)\text{-good interval and }X_{(j-1)t_0}R^{t_0}\cap \mathcal{U}_s\neq \varnothing\}.
	\end{equation*}
\end{defn}
\begin{lem}\label{lem: not many visits in expectation} Let $t_0\ge 1$ and consider finite subsets $R\subseteq B$ and $L\subseteq A$ with $e_B\in R$ and $e_A\in L$. For any $\varepsilon>0$, there exists $K_3\ge 1$ such that for every $n\ge 1$,
	\begin{equation*}
		\E\left(|\mathcal{V}_s|\mid \mathcal{P}_n^{t_0}\right)<\varepsilon n +K_3 \text{, where we denote }s=\lfloor n/t_0\rfloor t_0.
	\end{equation*}
\end{lem}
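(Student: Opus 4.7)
The plan is to adapt the argument of Lemma \ref{lem: expectation of unstable points is small} with one simplification: since we are counting intervals rather than lamp positions, the factor $|R^{t_0}|$ does not appear in the estimate. Given $\varepsilon>0$, I would first observe that every $j\in \mathcal{V}_s$ satisfies in particular $F_{j-1}\cap \mathcal{U}_s\neq \varnothing$, where $F_{j-1}\coloneqq X_{(j-1)t_0}R^{t_0}$, so by dropping the goodness condition on $I_j$ and using linearity of conditional expectation,
\begin{equation*}
\E\!\left(|\mathcal{V}_s|\mid \mathcal{P}_n^{t_0}\right) \le \sum_{j=1}^{\lfloor n/t_0\rfloor} \P\!\left(F_{j-1}\cap \mathcal{U}_s\neq \varnothing\mid \mathcal{P}_n^{t_0}\right).
\end{equation*}

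The next step is to bound each summand. Since $F_{j-1}\cap \mathcal{U}_s\neq \varnothing$ means there exists $b\in F_{j-1}$ with $\varphi_s(b)\neq \varphi_\infty(b)$, this event is contained in the event that some lamp in $F_{j-1}$ is modified at an instant strictly beyond $s$. Applying the Markov property at time $(j-1)t_0$ together with translation invariance---exactly as in Claim 1 of the proof of Lemma \ref{lem: expectation of unstable points is small}---shows that this latter probability equals $\P(0\text{ is poorly }(s-(j-1)t_0)\text{-stabilized})$, which depends only on the time gap $s-(j-1)t_0$.

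By Claim 2 of Lemma \ref{lem: expectation of unstable points is small}, which is a direct consequence of the almost-sure stabilization of lamp configurations, there exists $n_0\ge 1$ such that $\P(0\text{ is poorly }n_0\text{-stabilized})<\varepsilon t_0$. I then split the range of $j$ at $J\coloneqq \lfloor (s-n_0)/t_0\rfloor + 1$: for $1\le j\le J$ we have $s-(j-1)t_0\ge n_0$, so each conditional probability is at most $\varepsilon t_0$, contributing a total of at most $J\cdot \varepsilon t_0\le \varepsilon n+\varepsilon t_0$; for $j>J$, the number of such indices is at most $\lceil n_0/t_0\rceil + 1$ and each contributes at most $1$. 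Setting $K_3\coloneqq \varepsilon t_0 + \lceil n_0/t_0\rceil + 1$, which depends only on $\mu, t_0, R$, and $\varepsilon$, yields $\E(|\mathcal{V}_s|\mid \mathcal{P}_n^{t_0})< \varepsilon n + K_3$.

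I do not anticipate any serious obstacle. The goodness condition defining $\mathcal{V}_s$ only shrinks the event in question and plays no role in the stabilization argument, so the reasoning is strictly a weakening of Lemma \ref{lem: expectation of unstable points is small}. The one point to mirror carefully is the passage from the conditional probability $\P(\,\cdot\mid \mathcal{P}_n^{t_0})$ to the poorly-stabilized probability of the walk started at the identity, which is exactly the Markov-plus-translation-invariance step already carried out in the proof of Lemma \ref{lem: expectation of unstable points is small}; once that template is reused, the remainder is routine.
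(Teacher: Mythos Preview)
Your argument is correct, and it is genuinely simpler than the paper's proof. The paper takes a detour through transience: it introduces the set $J_\ell$ of indices $j$ for which $X_{(j-1)t_0}$ is revisited at least $\ell$ times, uses transience of the projection to $B$ to make $\P(j\in J_\ell)$ small, and bounds the complementary piece by $\E(|\mathcal{V}_s\setminus J_\ell|\mid \mathcal{P}_n^{t_0})\le \ell\,|R^{t_0}|\,\E(|\mathcal{U}_s|\mid \mathcal{P}_n^{t_0})$, finally invoking Lemma~\ref{lem: expectation of unstable points is small} to control $\E(|\mathcal{U}_s|\mid \mathcal{P}_n^{t_0})$.

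You skip all of this by observing that after dropping the goodness condition, the summands $\P(F_{j-1}\cap \mathcal{U}_s\neq\varnothing\mid \mathcal{P}_n^{t_0})$ are precisely the probabilities $\P(\varphi_s|_{F_{j-1}}\neq\varphi_\infty|_{F_{j-1}}\mid \mathcal{P}_n^{t_0})$ already handled in Lemma~\ref{lem: expectation of unstable points is small}; the only difference from that lemma is that you no longer multiply by $|R^{t_0}|$, since you count intervals rather than lamp positions. Your route avoids any appeal to transience and relies solely on stabilization, which is all that is really needed here. The paper's approach does have the minor structural advantage of making the dependence on $|\mathcal{U}_s|$ explicit, but for the purposes of the lemma as stated your direct argument is cleaner.
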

\begin{proof}
	Let $t_0$, $R$ and $L$ be as above.	For an arbitrary $\ell \ge 1$, define 
	\begin{align*}
		J_{\ell}\coloneqq \{j\in \mathbb{N}\mid \text{ there exist }n_1>n_2>\ldots> n_\ell>(j-1)t_0 &\text{ such that }\\
		&X_{n_k}=X_{(j-1)t_0}, \text{ for }k=1,2,\ldots,\ell \}.
	\end{align*}
	In other words, the set $J_{\ell}$ is formed by the values $j\ge 1$ such that the element $X_{(j-1)t_0}$ is visited at least $\ell$ times more in the future.
	
	\textbf{Claim 1:} For every $j\ge 1$, we have  $\P(1\in J_\ell)=\P(j\in J_\ell)$.
	
	Indeed, by using the Markov property, we see that for every $j\ge 1$,
	\begin{align*}
		\P(j\in J_\ell)&=\P(X_{(j-1)t_0} \text{ is visited more than }\ell \text{ times in the future})\\
		&=\P(e_B \text{ is visited more than }\ell \text{ times in the future})=\P(1\in J_{\ell}).
	\end{align*}
	
	\textbf{Claim 2:} We have $\lim_{\ell \to \infty}\P(1\in J_\ell)=0.$\\
	Indeed, recall that we assume that the induced random walk on $B$ is transient (see Subsection \ref{subsection: main assumptions} and the paragraph above it). The claim is then just a consequence of the transience of this random walk. \\
	
	\textbf{Claim 3:} $\E(|\mathcal{V}_s\backslash J_\ell|\mid \mathcal{P}_n^{t_0})\le \ell |R^{t_0}| \E(|\mathcal{U}_s|\mid \mathcal{P}_n^{t_0})$.\\
	Indeed, note that
	\begin{align*}
		\E\left( |\mathcal{V}_s\backslash J_\ell|\mid \mathcal{P}_n^{t_0} \right)&\le\E\left( \sum_{j=1}^{\lfloor n/t_0\rfloor}\mathds{1}_{\{X_{(j-1)t_0}R^{t_0}\cap \mathcal{U}_s\neq \varnothing\}}\mathds{1}_{\left\{\substack{ X_{(j-1)t_0}\text{ is visited }\le \ell \\ \text{ times after instant }jt_0} \right\} }\Big| \mathcal{P}_n^{t_0} \right)\\
		&=\E\left(\sum_{u\in \mathcal{U}_s}\sum_{r\in R^{t_0}} \sum_{j=1}^{\lfloor n/t_0\rfloor}\mathds{1}_{\{X_{(j-1)t_0}=ur^{-1}\}}\mathds{1}_{\left\{\substack{ X_{(j-1)t_0}\text{ is visited }\le \ell \\ \text{ times after instant }jt_0} \right\} }\Big| \mathcal{P}_n^{t_0} \right)\\
		&\le\E\left(|\mathcal{U}_s| |R^{t_0}|\ell\Big| \mathcal{P}_n^{t_0} \right)=\ell |R^{t_0}| \E(|\mathcal{U}_s|\mid \mathcal{P}_n^{t_0}).
	\end{align*}
	
	Consider $\varepsilon>0$ arbitrary. Thanks to Claim 2, we can find $\ell\ge 1$ large enough such that $p_{\ell}\coloneqq \P(0\in J_\ell )<\frac{\varepsilon}{2t_0}.$
	By virtue of Lemma \ref{lem: expectation of unstable points is small}, we can find $K_2\ge 0$ such that $		\E(|\mathcal{U}_s|\mid \mathcal{P}_n^{t_0})<\frac{\varepsilon}{2\ell|R^{t_0}|} n + K_2$. Now, using Claims 1 and 3, we see that
	\begin{align*}
		\E\left(|\mathcal{V}_s|\mid \mathcal{P}_n^{t_0}\right) &=\E\left(|J_{\ell}|\mid \mathcal{P}_n^{t_0}\right)+\E\left(|\mathcal{V}_s\backslash J_{\ell}| \mid \mathcal{P}_n^{t_0}\right)\\
		&\le \sum_{j=1}^{t_0}\P(j\in J_{\ell}) + \ell|R^{t_0}| \E(|\mathcal{U}_s|\mid \mathcal{P}_n^{t_0})\\
		&< t_0p_{\ell}+ \ell|R^{t_0}|\left(\frac{\varepsilon}{2\ell|R^{t_0}|} n + K_2\right)<\varepsilon n +\ell |R^{t_0}| K_2.
	\end{align*}
	The statement of the lemma follows from setting $K_3\coloneqq K_2\ell |R^{t_0}|$, which is a constant that depends only on $\mu$, $t_0$, $R$ and $\varepsilon$.
\end{proof}
The next lemma states that listing the visits to unstable points is a low entropy event. 

\begin{lem}\label{lem: visit times to unstable points have small entropy} 
	Let $\varepsilon>0$. Then there exists $T\ge 1$ such that the following holds. For any finite subsets $R\subseteq B$ and $L\subseteq A$ with $e_B\in R$ and $e_A\in L$, and $t_0\ge T$, it holds that $H(\mathcal{V}_s)<\varepsilon n$ for every $n\ge 1$, where we denote $s=\lfloor n/t_0\rfloor t_0$.
\end{lem}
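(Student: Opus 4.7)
The plan is to exploit the trivial entropy bound coming from the ambient discrete set in which $\mathcal{V}_s$ takes values. By Definition \ref{def: visits to unstable points}, $\mathcal{V}_s$ is a random subset of the deterministic index set $\{1,2,\ldots,\lfloor n/t_0\rfloor\}$, which has cardinality at most $n/t_0$. Hence $\mathcal{V}_s$ takes values in the power set of this index set, whose cardinality is at most $2^{\lfloor n/t_0\rfloor}$. The standard upper bound on entropy by the logarithm of the number of possible values then gives
\[ H(\mathcal{V}_s) \le \lfloor n/t_0\rfloor \log 2 \le \frac{n\log 2}{t_0}. \]

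Given $\varepsilon>0$, I would simply choose $T$ large enough so that $(\log 2)/T < \varepsilon$, for example $T=\lceil (\log 2)/\varepsilon\rceil+1$. For any $t_0\ge T$, the inequality above gives $H(\mathcal{V}_s) \le n(\log 2)/t_0 \le n(\log 2)/T < \varepsilon n$, uniformly in $n\ge 1$ and independently of the subsets $R\subseteq B$ and $L\subseteq A$. This yields the conclusion of the lemma.

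In this setting there is essentially no obstacle to overcome, and no concentration or moment estimate on $|\mathcal{V}_s|$ is required: we are not asking for uniformity in $t_0$ but rather for the freedom to take $t_0$ large in terms of $\varepsilon$. The finer expectation bound of Lemma \ref{lem: not many visits in expectation} plays a different role, namely controlling the conditional expectation $\E(|\mathcal{V}_s|\mid \mathcal{P}_n^{t_0})$, which is needed elsewhere in Section \ref{section: entropy inside the coarse neighborhood}. One could alternatively combine Lemmas \ref{lem: low expectation implies low entropy} and \ref{lem: not many visits in expectation} to obtain an entropy bound with smaller constants (since $h(\alpha)\to 0$ as $\alpha\to 0$), but the pigeonhole-type inequality $H(\mathcal{V}_s)\le \log|\mathrm{range}(\mathcal{V}_s)|$ together with the choice of $t_0\ge T$ already suffices to conclude.
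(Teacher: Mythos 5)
Your argument is correct and is exactly the paper's own proof: the paper also bounds $H(\mathcal{V}_s)$ by $\frac{n}{t_0}\log 2$ using that $\mathcal{V}_s$ is a subset of $\{1,\ldots,\lfloor n/t_0\rfloor\}$, and then chooses $T>\log(2)/\varepsilon$. Your remark about the role of Lemma \ref{lem: not many visits in expectation} is also accurate.
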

\begin{proof}
	Let $\varepsilon>0$. For any $R$ and $L$ as above and an arbitrary value of $t_0$, the visit times to unstable points $\mathcal{V}_s$ form a subset of $\{1,2,\ldots, \lfloor n/t_0\rfloor\}$. Then $H(\mathcal{V}_s)\le \frac{n}{t_0}\log(2)$, so that it suffices to choose $T>\frac{\log(2)}{\varepsilon}$ at the beginning and $t_0\ge T$.
\end{proof}

\subsection{Lamp increments at unstable elements}\label{subsection: lamp increments at unstable elements}
So far, we have proved that, conditioned on the $t_0$-coarse trajectory $\mathcal{P}_n^{t_0}$, it is possible to reveal the positions in the base group for which the lamp configuration has not yet stabilized, together with the time instants in which the lamp state at these positions could have been modified, while adding a small amount of entropy to our process. We will now see that the same holds for the values of the lamp increments that modified these positions in these time instants.

\begin{defn}\label{def: increments at unstable points}
	Let $t_0\ge 1$, and consider finite subsets $R\subseteq B, L\subseteq A$ with $e_B\in R$ and $e_A\in L$. Let $j\in \{1,2,\ldots, \lfloor n/t_0 \rfloor\}$ and $i\in I_j$. Let $n\ge 1$ and denote $s=\lfloor n/t_0\rfloor t_0$. For $b\in \mathcal{U}_s$, let us define the \emph{unstable increment in $b$ at instant $i$} by
	\begin{equation*}
		\Delta_s(b,i)\coloneqq \begin{cases}
			\varphi_{i}(b)^{-1} \varphi_{i+1}(b),&\text{if }I_j\text{ is an }(R,L)\text{-good interval, and}\\
			\star, \text{ otherwise.}
		\end{cases}
	\end{equation*}
\end{defn}


The next lemma corresponds to Step (\labelcref{step: step 9}) of the sketch of the proof (Subsection \ref{subsection: sketch of the proof}).
\begin{lem}\label{lem: increments at unstable points have small entropy}	Let $t_0\ge 1$, and consider finite subsets $R\subseteq B$ and $L\subseteq A$ with $e_B\in R$ and $e_A\in L$. For every $\varepsilon>0$ there exists $K_4\ge 0$ such that for every $n\ge 1$,
	\begin{equation*}
		H(\Delta_s \mid \mathcal{P}_n^{t_0}\vee \mathcal{U}_s\vee \mathcal{V}_s)<\varepsilon n +K_4, \text{ where we denote }s=\lfloor n/t_0\rfloor t_0.
	\end{equation*}
	
\end{lem}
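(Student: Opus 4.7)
The plan is to decompose $\Delta_s$ into its \emph{columns} indexed by the intervals $I_j$, $1\le j\le\lfloor n/t_0\rfloor$, and to bound each column separately using the structure of $(R,L)$-good increments together with Lemma~\ref{lem: not many visits in expectation}.

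For $j\in\mathcal{V}_s$, the interval $I_j$ is $(R,L)$-good by definition, so each increment $g_i=(f_i,Y_i)$ with $i\in I_j$ lies in the set of $(R,L)$-good elements of $A\wr B$, which has cardinality at most $|R|\cdot|L|^{|R|}$. Since the column $(\Delta_s(b,i))_{b\in\mathcal{U}_s,\,i\in I_j}$ is a deterministic function of $(g_i)_{i\in I_j}$ together with the base position $X_{(j-1)t_0}$ (recovered from $\mathcal{P}_n^{t_0}$), its contribution to the conditional entropy is at most
\[
M\;:=\;t_0\log\bigl(|R|\cdot|L|^{|R|}\bigr)\;=\;t_0\bigl(\log|R|+|R|\log|L|\bigr).
\]

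For $j\notin\mathcal{V}_s$, the column is essentially constant: every entry equals $\star$ when $I_j$ is bad, and equals $e_A$ when $I_j$ is good, since $j\notin\mathcal{V}_s$ together with $I_j$ good forces $X_{(j-1)t_0}R^{t_0}\cap\mathcal{U}_s=\varnothing$, so that no modification of an unstable lamp can occur during $I_j$ (modulo a harmless widening of the neighborhood to accommodate the off-by-one between the index $i$ and the increment $g_{i+1}$ that realizes $\Delta_s(b,i)$). The column is therefore specified by a single bit ``bad/good'', and the total contribution of these bits over $j\notin\mathcal{V}_s$ is to be absorbed into the additive constant $K_4$ via the Obscuring Lemma (Lemma~\ref{lem: obscuring values of random variable}) applied to the event that an interval is bad, whose probability is a fixed function of $R,L,t_0$.

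Combining both types of columns via subadditivity of conditional entropy yields
\[
H\bigl(\Delta_s\mid\mathcal{P}_n^{t_0}\vee\mathcal{U}_s\vee\mathcal{V}_s\bigr)\;\le\;\E(|\mathcal{V}_s|)\cdot M+C,
\]
with $C=C(\varepsilon,t_0,R,L)$ absorbing the contribution from columns outside $\mathcal{V}_s$. Applying Lemma~\ref{lem: not many visits in expectation} with parameter $\varepsilon/M$ in place of $\varepsilon$ gives $\E(|\mathcal{V}_s|)\le(\varepsilon/M)\,n+K_3$, hence the right-hand side is at most $\varepsilon n+K_3 M+C$, and we take $K_4:=K_3 M+C$. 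The principal obstacle is the clean handling of the $j\notin\mathcal{V}_s$ contribution: a crude subadditive bound produces a linear-in-$n$ term whose slope depends on $R,L,t_0$ but not on $\varepsilon$, so one must invoke the Obscuring Lemma carefully---or enlarge the conditioning using $\beta_n(t_0,R,L)$ at the level of Proposition~\ref{prop: the lamp config INSIDE the neighborhood has small entropy}---to keep that contribution from competing with the $\varepsilon n$ budget.
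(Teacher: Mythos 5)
Your treatment of the columns $j\in\mathcal{V}_s$ is exactly the paper's argument: bound the per-visit contribution by a constant $M=M(t_0,R,L)$ counting the possible $(R,L)$-good data, then run Lemma~\ref{lem: not many visits in expectation} at scale $\varepsilon/M$ so that $\E(|\mathcal{V}_s|\mid\mathcal{P}_n^{t_0})\cdot M<\varepsilon n+K_3M$. (The paper uses the per-visit count $\log(|L|^{|R^{t_0}|}+1)$ rather than your $t_0\log(|R|\cdot|L|^{|R|})$; the difference is immaterial since both are constants absorbed into the rescaling of $\varepsilon$.)

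Where you diverge is in the columns $j\notin\mathcal{V}_s$, and there your proposed mechanism does not work. Each such column carries one residual bit (all-$\star$ if $I_j$ is bad, versus all-$e_A$ if $I_j$ is good but its neighborhood misses $\mathcal{U}_s$), and there are up to $\lfloor n/t_0\rfloor$ of them, so their total contribution is linear in $n$ and cannot be ``absorbed into the additive constant $K_4$.'' The Obscuring Lemma cannot rescue this: in the statement of the lemma the sets $R,L$ and the scale $t_0$ are fixed \emph{before} $\varepsilon$, so $\P(I_j\text{ is }(R,L)\text{-bad})$ is a fixed constant that you have no freedom to push below the threshold $\delta$ that Lemma~\ref{lem: obscuring values of random variable} requires, and the resulting per-interval entropy is a fixed positive constant. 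The two workable repairs are the ones you gesture at in your final sentence: either observe that these columns contribute at most $\lfloor n/t_0\rfloor\log 2\le(\log 2/t_0)\,n$, which fits the $\varepsilon n$ budget precisely when $t_0$ is large (and in Proposition~\ref{prop: the lamp config INSIDE the neighborhood has small entropy} the lemma is only invoked for $t_0\ge T$ with $T$ large), or add $\beta_n(t_0,R,L)$ to the conditioning, which reveals which intervals are bad and makes every column with $j\notin\mathcal{V}_s$ deterministic. For what it is worth, the paper's own proof passes over these columns in silence and writes the bound $H(\Delta_s\mid\cdots)\le\E(|\mathcal{V}_s|\mid\mathcal{P}_n^{t_0})\log\left(|L|^{|R^{t_0}|}+1\right)$ directly, so you have correctly surfaced a point that genuinely needs one of these two repairs; but as written, your Obscuring-Lemma route is the one step that would fail.
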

\begin{proof}
	Let us consider $t_0$, $R$ and $L$ as above. Consider $\varepsilon>0$. Note that for every $1\le j \le \lfloor n/t_0\rfloor$, $i\in I_j$ and $b\in \mathcal{U}_s$, the variable $\Delta_s(b,i)$ can take a total of at most $|L|^{|R^{t_0}|}+1$ different values. Using Lemma \ref{lem: not many visits in expectation} we can find $K_3\ge 0$ such that 
	\begin{equation*}
		\E(|\mathcal{V}_s|\mid \mathcal{P}_n^{t_0})<\frac{\varepsilon n}{\log\left(|L|^{|R^{t_0}|}+1\right)}+K_3.
	\end{equation*}
	Now we have
	\begin{align*}
		H(\Delta_s \mid \mathcal{P}_n^{t_0}\vee \mathcal{U}_s\vee \mathcal{V}_s)&\le \E(|\mathcal{V}_s|\mid \mathcal{P}_n^{t_0})\log\left(|L|^{|R^{t_0}|}+1\right)\le \varepsilon n +K_3\log\left(|L|^{|R^{t_0}|}+1\right).
	\end{align*}
	The statement of the lemma follows by choosing $K_4\coloneqq K_3\log\left(|L|^{|R^{t_0}|}+1\right)$.
\end{proof}

\subsection{Proof of Proposition \ref{prop: the lamp config INSIDE the neighborhood has small entropy}}\label{subsection: proof of LAMPS INSIDE}

Now we prove the main result of this section.

\begin{proof}[Proof of Proposition \ref{prop: the lamp config INSIDE the neighborhood has small entropy}]
	Let $\varepsilon>0$. Using Lemma \ref{lem: visit times to unstable points have small entropy} we can find $T\ge 1$ such that for any $R$ and $L$ as above, if $t_0>T$, then $H(\mathcal{V}_s)<\frac{\varepsilon}{3}n$ for every $n\ge 1$. Furthermore, let us consider a constant $K_4\ge 0$ such that
	\begin{equation*}
		H(\Delta_s \mid \mathcal{P}_n^{t_0}\vee \mathcal{U}_s\vee \mathcal{V}_s)<\frac{\varepsilon}{3} n +K_4
	\end{equation*}
	
	for every $n\ge 1$,	which exists thanks to Lemma \ref{lem: increments at unstable points have small entropy}.
	Next, let us choose $N_1\ge 1$ such that if $n\ge N_1$, then $H(\mathcal{U}_s\mid \mathcal{P}_n^{t_0})<\frac{\varepsilon}{3}n$.
	
	In order to simplify our notation, let us denote $\mathcal{N}_n=\mathcal{N}_n(t_0,R)$ and $\beta_n=\beta_n(t_0,R,L)$. Recall the definition of $\Phi_{n}^{\mathrm{in}}(t_0,R)$ as the the lamp configuration inside the coarse neighborhood (Definition \ref{def: entropy in and out lamps}). Note that the value of $\varphi_s$ at an element $b\in \mathcal{N}_n$ is completely determined by the lamp increments that affect position $b$, and this information is completely contained in $\beta_n$ together with $\Delta_s$. This implies that $		H_{A^{B}}\left(\Phi_{n}^{\mathrm{in}}(t_0,R)\Big|\mathcal{P}_n^{t_0}\vee \beta_n\vee \Delta_s\right)=0.$
	
	With the above, we can conclude that for every $n\ge N_1$, it holds that
	\begin{align*}
		H_{A^{B}}\left(\Phi_{n}^{\mathrm{in}}(t_0,R)\Big| \mathcal{P}_n^{t_0}\vee \beta_n\right)&\le H_{A^{B}}\left(\Phi_{n}^{\mathrm{in}}(t_0,R)\Big|\mathcal{P}_n^{t_0}\vee \beta_n\vee \Delta_s\right)+H\left(\Delta_s\Big| \mathcal{P}_n^{t_0}\vee \mathcal{U}_s\vee \mathcal{V}_s\right)+\\&\hspace{30pt}+H\left(\mathcal{V}_s\right)+H\left(\mathcal{U}_s\Big| \mathcal{P}_n^{t_0}\right) \\
		&\le 0 + \frac{\varepsilon}{3} n +K_4 +\frac{\varepsilon}{3} n + \frac{\varepsilon}{3} n=\varepsilon n +K_4.
	\end{align*}
\end{proof}
\section{Proofs of the main theorems}\label{section: proofs of the main theorems}
In this section we present the proofs of Theorem \ref{thm: lamplighter Poisson boundary} and Theorem \ref{thm: Poisson for finite first moment}. Both proofs have a similar structure, and the main difference between them is that for Theorem \ref{thm: Poisson for finite first moment} we need to show that the additional hypothesis of a finite first moment implies that the coarse trajectory on the base group has low entropy, when conditioned on the limit lamp configuration. This fact relies on Proposition \ref{prop: identification of B with finite first moment}, which is proved in the next subsection.

The following proposition will be used in both proofs, and it states that the entropy of the position of the $\mu$-random walk at time $n$ is small, when conditioned on the $t_0$-coarse trajectory and the increments that occurred during the $(R,L)$-bad intervals. This is the combination of the main results of Sections \ref{section: the coarse trajectory} and \ref{section: entropy inside the coarse neighborhood}, where we estimated the entropy of the lamp configuration at instant $n$ outside of the $(t_0,R)$-coarse neighborhood and inside of the $(t_0,R)$-coarse neighborhood, respectively.

\begin{prop} \label{prop: partitions pin down the n-th instant} For any $\varepsilon>0$, there exists $T\ge 1$ such that the following holds. Consider finite subsets $R\subseteq B$ and $L\subseteq A$ with $e_B\in R$ and $e_A\in L$, and $t_0\ge T$. Then there exist $N\ge 1$ and $K\ge 0$ such that for every  $n\ge N$,
	\begin{equation*}\label{eq: partitions pin down the nth instant}
		H_{A^B}\left(\varphi_n,X_n\mid \mathcal{P}_n^{t_0}\vee\beta_n(t_0,R,L)\right)<\varepsilon n +K.
	\end{equation*}
\end{prop}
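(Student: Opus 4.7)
The plan is that Proposition \ref{prop: partitions pin down the n-th instant} is essentially the combination of the two main outputs of Sections \ref{section: the coarse trajectory} and \ref{section: entropy inside the coarse neighborhood}, glued together using the chain rule and monotonicity properties of conditional entropy (Lemma \ref{lem: basic properties entropy}), together with the fact that mean conditional entropy is dominated by unconditioned entropy (Lemma \ref{lem: conditional entropy is at most the usual entropy}). I choose $T \ge 1$ and $K \ge 0$ to be precisely those provided by Proposition \ref{prop: the lamp config INSIDE the neighborhood has small entropy} for the given $\varepsilon$.

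First I would reduce the problem from the full random walk position $(\varphi_n,X_n)$ to the lamp configuration $\varphi_s$ at the last instant $s = \lfloor n/t_0\rfloor t_0$ that is a multiple of $t_0$. Write $\beta_n=\beta_n(t_0,R,L)$ for brevity. By Lemma \ref{lem: basic properties entropy} (items 1 and 2),
\begin{align*}
H_{A^B}(\varphi_n,X_n \mid \mathcal{P}_n^{t_0}\vee\beta_n) &\le H_{A^B}(\varphi_n,X_n,\varphi_s \mid \mathcal{P}_n^{t_0}\vee\beta_n) \\
&= H_{A^B}(\varphi_n,X_n \mid \varphi_s \vee \mathcal{P}_n^{t_0}\vee\beta_n) + H_{A^B}(\varphi_s \mid \mathcal{P}_n^{t_0}\vee\beta_n).
\end{align*}
The first summand vanishes by Lemma \ref{lem: it suffices to guess the last multiple of t0}, so I am left with estimating $H_{A^B}(\varphi_s \mid \mathcal{P}_n^{t_0}\vee\beta_n)$.

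Next, I would split $\varphi_s$ according to whether the base element lies inside or outside the coarse neighborhood, that is, $\varphi_s$ is determined by the pair $\Phi_s^{\mathrm{in}}(t_0,R)$ and $\Phi_s^{\mathrm{out}}(t_0,R)$. Applying the chain rule once more,
\begin{align*}
H_{A^B}(\varphi_s \mid \mathcal{P}_n^{t_0}\vee\beta_n) &= H_{A^B}\bigl(\Phi_s^{\mathrm{in}}(t_0,R) \mid \Phi_s^{\mathrm{out}}(t_0,R)\vee\mathcal{P}_n^{t_0}\vee\beta_n\bigr) \\
&\quad + H_{A^B}\bigl(\Phi_s^{\mathrm{out}}(t_0,R) \mid \mathcal{P}_n^{t_0}\vee\beta_n\bigr).
\end{align*}
By Lemma \ref{lem: conditional entropy is at most the usual entropy}, the second term is bounded above by $H(\Phi_s^{\mathrm{out}}(t_0,R) \mid \mathcal{P}_n^{t_0}\vee\beta_n)$, which equals $0$ by Lemma \ref{lem: lamp config outside coarse neighborhood has small entropy}. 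For the first term, monotonicity under additional conditioning (Lemma \ref{lem: basic properties entropy}, item 3) gives the bound $H_{A^B}(\Phi_s^{\mathrm{in}}(t_0,R) \mid \mathcal{P}_n^{t_0}\vee\beta_n)$, which in turn is at most $\varepsilon n + K$ for all $n \ge N$ by Proposition \ref{prop: the lamp config INSIDE the neighborhood has small entropy}. Combining the two contributions yields the desired inequality.

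Since the heavy lifting was already carried out in Proposition \ref{prop: the lamp config INSIDE the neighborhood has small entropy} (and in the entropy estimates on $\beta_n$ and the coarse trajectory), there is no genuine obstacle at this stage; the only delicate point is to keep careful track of the role of the $\mu$-boundary $A^B$ throughout, and to apply the conditional-entropy inequalities only in forms that hold over an arbitrary $\mu$-boundary (as in Lemmas \ref{lem: conditional entropy is at most the usual entropy} and \ref{lem: basic properties entropy}).
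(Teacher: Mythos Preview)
Your proposal is correct and follows essentially the same route as the paper's proof: reduce from $(\varphi_n,X_n)$ to $\varphi_s$ via Lemma \ref{lem: it suffices to guess the last multiple of t0}, then split $\varphi_s$ into its values inside and outside the coarse neighborhood, killing the outside contribution with Lemma \ref{lem: lamp config outside coarse neighborhood has small entropy} and bounding the inside contribution with Proposition \ref{prop: the lamp config INSIDE the neighborhood has small entropy}. The only cosmetic difference is that the paper applies the chain rule in a slightly different order and does not explicitly invoke Lemma \ref{lem: conditional entropy is at most the usual entropy} for the outside term (since $\Phi_s^{\mathrm{out}}$ is literally determined by $\mathcal{P}_n^{t_0}\vee\beta_n$, the mean conditional entropy over any boundary vanishes directly).
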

\begin{proof}
	
	Indeed, consider $\varepsilon>0$ and $T\ge 1$ given by Proposition \ref{prop: the lamp config INSIDE the neighborhood has small entropy}. Then for every choice of $R$, $L$ and $t_0$ as above, there are $N\ge 1$ and $K\ge 0$ such that Equation \eqref{eq: lamp config INSIDE} holds for every $n\ge N$. Let us write $\beta_n=\beta_n(t_0,R,L)$ and $\mathcal{N}_n=\mathcal{N}_n(t_0,R)$ in order to have less notation.
	
	Using Lemma \ref{lem: it suffices to guess the last multiple of t0}, Lemma \ref{lem: lamp config outside coarse neighborhood has small entropy} and the fact that $H(X_s\mid \mathcal{P}_{n}^{t_0})=0$, we have that
	\begin{align*}
		H_{A^B}(\varphi_n,X_n\mid \mathcal{P}_n^{t_0}\vee\beta_n)&\le H_{A^B}(\varphi_s\mid \mathcal{P}_n^{t_0}\vee\beta_n)+H_{A^B}(\varphi_n,X_n\mid \mathcal{P}_n^{t_0}\vee\beta_n\vee\varphi_s)\\
		&= H_{A^B}(\varphi_s\mid \mathcal{P}_n^{t_0}\vee\beta_n) +0\\
		&\le H_{A^B}(\varphi_s|_{B\backslash \mathcal{N}_n}\mid \mathcal{P}_n^{t_0}\vee\beta_n)+H_{A^B}(\varphi_s|_{\mathcal{N}_n}\mid \mathcal{P}_n^{t_0}\vee\beta_n)\\
		&\le 0+\varepsilon n +K.
	\end{align*}
\end{proof}

Now we will prove Theorem \ref{thm: lamplighter Poisson boundary}. The proof of Theorem \ref{thm: Poisson for finite first moment} is given below in Subsection \ref{subsection: measures with a finite first moment}.

\begin{proof}[Proof of Theorem \ref{thm: lamplighter Poisson boundary}]
	
	Thanks to Theorem \ref{thm: conditional entropy alternative formulation}, it suffices to prove that 
	\begin{equation*}
		\lim_{n\to \infty}\frac{H_{A^B\times \partial B}(\varphi_n,X_n)}{n}=0.
	\end{equation*}
	
	Let $\varepsilon>0$. Let us use Lemma \ref{lem: coarse traj has small entropy} to find $T\ge 1$ such that for every $t_0>T$, it holds that for every $n\ge 1$, $H_{\partial B}(\mathcal{P}_n^{t_0})<\frac{\varepsilon}{3} n$.
	
	Thanks to Proposition \ref{prop: partitions pin down the n-th instant} we can find $T\ge 1$ large enough such that in addition to the conditions of the previous paragraph, the following holds for every $t_0>T$. For any choice of finite subsets $R\subseteq B$ and $L\subseteq A$ with $e_B\in R$ and $e_A\in L$, there exist $N\ge 1$ and $K\ge 0$ such that for every $n\ge N$ we have $H_{A^B}\left(\varphi_n,X_n\mid \mathcal{P}_n^{t_0}\vee\beta_n(t_0,R,L)\right)<\frac{\varepsilon}{3} n +K.$ Additionally, by virtue of Lemma \ref{lem: bad increments have small entropy} there exist $R$ and $L$ as above and $K_1\ge 0$, such that $H\left(\beta_n(t_0,R,L)\right)<\frac{\varepsilon}{3}n+K_1 \text{ for every } n\ge 1.$
	
	Consider the associated values of $N$ and $K$ given by Proposition \ref{prop: partitions pin down the n-th instant}. Then for every $n\ge N$, we can use Lemma \ref{lem: basic properties entropy} to obtain
	\begin{align*}
		H_{A^B\times \partial B}\left(\varphi_n,X_n	\right)&\le H_{A^B\times \partial B}\left(\varphi_n,X_n \mid \mathcal{P}_n^{t_0}\vee \beta_n(t_0,R,L)\right)+ H_{A^B\times \partial B}( \mathcal{P}_n^{t_0}\vee \beta_n(t_0,R,L))\\
		&\le H_{A^B\times \partial B}\left(\varphi_n,X_n \mid \mathcal{P}_n^{t_0}\vee \beta_n(t_0,R,L)\right)+H_{A^B\times \partial B}\left(\beta_n(t_0,R,L)\right)+\\
		& \phantom{=H_{A^B\times \partial B}\left(\varphi_n,X_n \mid \mathcal{P}_n^{t_0}\vee \beta_n(t_0,R,L)\right)}+H_{A^B\times \partial B}(\mathcal{P}_n^{t_0}).
	\end{align*}
	
	Next, using Lemma \ref{lem: conditional entropy is at most the usual entropy} we have the following upper bounds: 
	\begin{equation*}
		H_{A^B\times \partial B}(\mathcal{P}_n^{t_0})\le H_{\partial B}(\mathcal{P}_n^{t_0}), \text{ and }H_{A^B\times \partial B}\left(\beta_n(t_0,R,L)\right)\le H\left(\beta_n(t_0,R,L)\right).
	\end{equation*}
	Combining this with the above, we see that
	\begin{align*}
		H_{A^B\times \partial B}\left(\varphi_n,X_n	\right)&\le H_{A^B\times \partial B}\left(\varphi_n,X_n \mid \mathcal{P}_n^{t_0}\vee \beta_n(t_0,R,L)\right)+H\left(\beta_n(t_0,R,L)\right)+H_{\partial B}(\mathcal{P}_n^{t_0}).\\
		&\le \frac{\varepsilon}{3}n+K+\frac{\varepsilon}{3}n+K_1+\frac{\varepsilon}{3}n=\varepsilon n+K+K_1.
	\end{align*}
	
	We conclude that for an arbitrary $\varepsilon>0$, we have $\limsup_{n\to \infty} \frac{H_{A^B\times \partial B}\left(\varphi_n,X_n	\right)}{n}\le\varepsilon$, and hence $
		\lim_{n\to \infty} \frac{H_{A^B\times \partial B}\left(\varphi_n,X_n	\right)}{n}=0.$
\end{proof}

\begin{proof}[Proof of Corollary \ref{cor: finite first moment Zd poisson boundary}]
	It is proved in \cite[Theorem 3.3]{Kaimanovich1991} that if $\mu$ is a probability measure on $A\wr \Z^d$ such that its projection to $\Z^d$ defines a transient random walk, then the lamp configurations stabilize along almost every trajectory to a limit lamp configuration (this holds more generally for any probability measure on $A\wr B$ with a finite first moment that induces a transient random walk on the base group $B$ \cite[Lemma 1.1]{Erschler2011}). Additionally, it is a well-known fact that probability measures with a finite first moment have finite entropy \cite{Derriennic1986} (see also \cite[Lemma 2.2.2]{Kaimanovich2001}). The hypotheses of Theorem \ref{thm: lamplighter Poisson boundary} are thus verified. Since the Poisson boundary of the induced random walk on $\Z^d$ is trivial \cite{Blackwell1955} (see also \cite{DoobSnellWilliamson1960} and \cite{ChoquetDeny1960}), we obtain the desired result.
\end{proof}

\subsection{Proof of Theorem \ref{thm: Poisson for finite first moment}}\label{subsection: measures with a finite first moment}
The objective of this subsection is to prove Theorem \ref{thm: Poisson for finite first moment}. The proof of this result is analogous to that of Theorem \ref{thm: lamplighter Poisson boundary}, with the only difference being that Lemma \ref{lem: coarse traj has small entropy} is replaced by Proposition \ref{prop: identification of B with finite first moment} below. Intuitively, it states that, with the additional assumption of a finite first moment, the $t_0$-coarse trajectory has small entropy when conditioned on the boundary of limit lamp configurations.
\begin{prop}\label{prop: identification of B with finite first moment}
Let $\mu$ be a probability measure on $A\wr B$ with $H(\mu)<\infty$ and such that \ $\langle\supp{\mu}\rangle_{+}$ contains two distinct elements with the same projection to $B$. Suppose that $\mu$ induces a transient random walk on $B$. Then for every $\varepsilon >0$, there exists $T\ge 1$ such that for every $t_0\ge T,$
	\begin{equation*}
		H_{A^B}(\mathcal{P}_n^{t_0})<\varepsilon n, \text{ for every } n\ge 1.
	\end{equation*}
\end{prop}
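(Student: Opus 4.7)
The plan is to reduce the proposition to Lemma \ref{lem: coarse traj has small entropy} by establishing that the $\mu$-boundary $(\partial B, \nu_B)$ is a quotient of $(A^B, \nu_\mathcal{L})$ as $(A\wr B,\mu)$-boundaries. Granting this, Lemma \ref{lem: conditional entropy is at most the usual entropy} gives $H_{A^B}(\mathcal{P}_n^{t_0}) \le H_{\partial B}(\mathcal{P}_n^{t_0})$, and Lemma \ref{lem: coarse traj has small entropy} then yields $H_{A^B}(\mathcal{P}_n^{t_0}) < \varepsilon n$ for all $t_0$ sufficiently large.

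The entire burden of the proof is therefore to construct a measurable, $A\wr B$-equivariant map $\Xi : (A^B, \nu_\mathcal{L}) \to (\partial B, \nu_B)$ such that $\Xi \circ \mathcal{L} = \mathbf{bnd}_B$ almost surely on the path space. The hypothesis that $\langle \supp{\mu} \rangle_{+}$ contains two distinct elements with the same $B$-projection is exactly what is needed: it rules out the degenerate situation in which $\supp{\mu}$ lies inside a conjugate of $B$, for in that case $\varphi_\infty$ is identically trivial and cannot dominate $\partial B$ as a $\mu$-boundary. Together with transience and the standing lamp stabilization assumption, this hypothesis guarantees that along $\P$-almost every trajectory the limit configuration $\varphi_\infty$ records a genuine trace of the excursion of the walk to infinity. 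The natural strategy for building $\Xi$ is to decompose the path at some time $n$: transience plus stabilization ensure that the lamp modifications contributed after time $n$ have support essentially disjoint from the positions already stabilized by time $n$, so that the ``tail part'' of $\varphi_\infty$ is, up to a controlled error, the $X_n$-translate of a fresh limit lamp configuration with the same law as $\varphi_\infty$. Iterating this as $n\to\infty$ and using the semigroup hypothesis to extract the asymptotic direction of the tail support should recover $\mathbf{bnd}_B(\omega) \in \partial B$ as a measurable function of $\varphi_\infty$ alone.

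The main obstacle I anticipate is making this reconstruction of $\mathbf{bnd}_B$ from $\varphi_\infty$ fully rigorous, since we need to read off the ``shift'' $X_n$ from the tail of $\varphi_\infty$ without direct access to any $X_n$ or $\varphi_n$. The cleanest route is probably a zero-one argument: condition the path space on $\varphi_\infty$ and consider the induced Doob $h$-process on the $B$-coordinate; its pushforward to $\partial B$ is a stationary probability measure, and we need it to be almost surely a point mass. The non-degeneracy produced by the semigroup hypothesis, combined with transience of $\mu_B$ and the fact that $\varphi_\infty$ itself is measurable with respect to the tail $\sigma$-algebra of the $(A\wr B)$-walk, should yield triviality of the shift-invariant $\sigma$-algebra of $\{X_n\}_{n\ge 0}$ after conditioning on $\varphi_\infty$, delivering the required atomicity and hence the map $\Xi$.
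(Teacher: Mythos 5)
Your reduction is sound as far as it goes: if $(\partial B,\nu_B)$ were known to be a $G$-equivariant quotient of $(A^B,\nu_{\mathcal{L}})$, then Lemma \ref{lem: conditional entropy is at most the usual entropy} gives $H_{A^B}(\mathcal{P}_n^{t_0})\le H_{\partial B}(\mathcal{P}_n^{t_0})$ and Lemma \ref{lem: coarse traj has small entropy} finishes the proof. The problem is that the existence of the factor map $\Xi$ is precisely the hard content of the proposition, and you have not proved it. A posteriori it is a \emph{consequence} of Theorem \ref{thm: Poisson for finite first moment} (if $A^B$ is the full Poisson boundary then every $\mu$-boundary, in particular $\partial B$, is a quotient of it), so deriving the proposition from it is close to circular unless you supply an independent construction. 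Your two suggested routes do not close this gap: the ``tail of $\varphi_\infty$ is an $X_n$-translate of a fresh limit configuration'' heuristic gives no mechanism for reading off $X_n$, or even the $\partial B$-coordinate, from $\varphi_\infty$ alone; and the ``zero-one argument'' asks for triviality of the invariant $\sigma$-algebra of $\{X_n\}$ under the Doob-conditioned (hence non-homogeneous, non-stationary) process given $\varphi_\infty$, which is exactly the statement to be proved and for which no known zero-one law applies. Note also that the paper explicitly warns against this strategy: $(A^B,\nu_{\mathcal{L}})$ is not a $\mu_B$-boundary of $B$, so the standard boundary machinery cannot be applied directly to compare it with $\partial B$. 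Finally, your sketch never uses the finite first moment beyond vague appeals to stabilization, whereas it is essential here.

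The paper's actual argument sidesteps the construction of any factor map. It defines the conditional asymptotic entropy $h(B,\mu_B\mid A^B)=\lim_n H_{A^B}(X_n)/n$ and proves it vanishes by a Shannon--McMillan--Breiman-type counting argument: for $\nu_{\mathcal{L}}$-a.e.\ $\varphi_\infty$ one builds finite sets $Q_n\subseteq B$ with $\tfrac1n\log|Q_n|\to 0$ and $\limsup_n\P^{\varphi_\infty}(X_n\in Q_n)>0$; if the conditional entropy were positive, $X_n$ would almost surely lie in a set of points of conditional probability at most $e^{-nh/2}$, contradicting $|Q_n|e^{-nh/2}\to 0$. The sets $Q_n$ are carved out of a bounded neighborhood of $\supp{\varphi_\infty}$ intersected with an annulus of radii $(\ell\pm\varepsilon)n$; here the finite first moment is used three times (existence of the speed $\ell>0$ in the non-Liouville case via Guivarc'h's inequality, linear growth of $|\supp{\varphi_n}|$ via Kingman, and the bound $\varepsilon n$ on the range of the $n$-th lamp increment), and the semigroup hypothesis is used, via a resampling of a block of $k$ increments, to show that $X_n$ lies within a bounded distance $D$ of $\supp{\varphi_\infty}$ with probability bounded away from zero. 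You would need to supply an argument of comparable substance to make your approach work.
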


In what remains of this subsection we prove Proposition \ref{prop: identification of B with finite first moment}.

Let us first recall the relation between the asymptotic entropy, the growth of the group and the speed of a random walk driven by a step distribution with a finite first moment.

Let $G$ be a finitely generated group and let $|\cdot |$ be the word length with respect to some finite generating set of $G$. Let $\mu$ be a probability measure on $G$ with a finite first moment, and denote by $\{w_n\}_{n\ge 0}$ the $\mu$-random walk on $G$. It is a consequence of Kingman's subadditive ergodic theorem that the limit
\begin{equation} \label{eq: speed rw}
	\ell\coloneqq\lim_{n\to \infty}\frac{\E(|w_n|)}{n}
\end{equation}
exists, and that for $\P$-almost every trajectory of the $\mu$-random walk one has $\ell=\lim_{n\to \infty}\frac{|w_n|}{n}$. The value of $\ell$ is called the \emph{speed} of the random walk $(G,\mu)$. Additionally, let us denote $v\coloneqq\lim_{n\to \infty} \frac{1}{n}\log\Big(\left| \left\{ g\in G\mid |g|\le n \right\}\right|\Big)$ the \emph{exponential growth rate} of $G$. Recall also Definition \ref{defn: asymptotic entropy} for the asymptotic entropy of $(G,\mu)$.  The so-called ``fundamental inequality'' of Guivarc'h states that $h_{\mu}\le \ell v$ \cite[Proposition 2]{Guivarch1980}. 

In order to prove Proposition \ref{prop: identification of B with finite first moment}, we would like to show that the Poisson boundary of the $\mu_B$-random walk on $B$ is completely described by the stabilization of the limit lamp configuration. However, we cannot directly apply the conditional entropy criterion. Indeed, this space is not a $\mu_B$-boundary, so applying the usual results is not correct. Nonetheless, we can modify the definitions and follow the same ideas of Kaimanovich's conditional entropy criterion to prove our result. The following is similar to the exposition in \cite[Section 4]{Kaimanovich2000}. 

Recall our notation from Subsection \ref{subsection: construction of the boundary} for the $\mu$-random walk $\{(\varphi_n,X_n)\}_{n\ge 0}$, so that $(\varphi_n,X_n)=(f_1,Y_1)\cdots (f_n,Y_n)$, $n\ge 0$, with $\{(f_i,Y_i)\}_{i\ge 1}$ a sequence of independent identically distributed random variables with law $\mu$. Recall also the definitions associated with entropy, partitions and conditional entropy of Subsection \ref{subsection: entropy}. 

We define
$ h\left(B,\mu_B\mid A^B\right)\coloneqq \lim_{n\to \infty}\frac{H_{A^B}(X_n)}{n}, $
which is well-defined since the sequence $\{H_{A^B}(X_n)\}_{n\ge 0}$ is subadditive. For every $\varphi_{\infty}\in A^B$ denote by $p_n^{\varphi_{\infty}}$, $n\ge 0$, the one-dimensional distributions of the measure $\P^{\varphi_{\infty}}.$ It is a consequence of Kingman's subadditive ergodic theorem that for $\P^{\varphi_{\infty}}$-almost every trajectory $\{(\varphi_n,X_n)\}_{n\ge 0}$, the limit
\( h\left(B,\mu_B\mid \varphi_{\infty}\right)\coloneqq -\lim_{n\to \infty}\frac{\log(p_n^{\varphi_{\infty}}(X_n))}{n} \)
is well defined and a constant. Furthermore, we have
\[  h\left(B,\mu_B\mid \varphi_{\infty}\right)=\lim_{n\to \infty}\frac{\E\Big(-\log(p_n^{\varphi_{\infty}}(X_n))\Big)}{n}=\lim_{n\to \infty}\frac{H_{\varphi_{\infty}}(X_n)}{n}, \]

so that after integrating we have 

\begin{equation}\label{eq: relation between entropy AB and entropy varphiinfty}  h\left(B,\mu_B\mid A^B\right)=\int_{A^B} h\left(B,\mu_B\mid \varphi_{\infty}\right)\ d\nu_{\mathcal{L}}(\varphi_{\infty}).
\end{equation}

With this, Proposition \ref{prop: identification of B with finite first moment} will follow from proving that $h\left(B,\mu_B\mid A^B\right)=0$ in Lemma \ref{lem: hAB 0} below.

If the induced random walk on $B$ is Liouville, then it follows from the usual entropy criterion that $h\left(B,\mu_B\right)=0$, which implies the above. Hence, we will suppose from now on that the induced random walk on $B$ is not Liouville. Denote by $\ell$ the speed of the $\mu_B$-random walk on $B$. It follows from the entropy criterion together with the fundamental inequality of Guivarc'h that $\ell>0$. 

%
%

\begin{lem}\label{lem: sets for finite first moment entropy}
Let $\mu$ be a probability measure on $A\wr B$ with a finite first moment and such that $\langle\supp{\mu}\rangle_{+}$ contains two distinct elements with the same projection to $B$. Suppose that the random walk induced by $\mu$ on $B$ is not Liouville.	Then, for $\nu_{\mathcal{L}}$-almost-every $\varphi_{\infty}\in A^B$ there is a family of finite subsets $Q_n\subseteq B$, $n\ge 1$, such that
	\begin{enumerate}
		\item $\limsup_{n\to \infty}\P^{\varphi_{\infty}}(X_n\in Q_n)>0$, and
		\item $\lim_{n\to \infty}\frac{1}{n}\log|Q_n|=0.$
	\end{enumerate}
\end{lem}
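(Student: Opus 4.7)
My plan is to reduce the lemma to the vanishing of the conditional entropy rate $h(B,\mu_B\mid \varphi_\infty)$ for $\nu_{\mathcal{L}}$-a.e.\ $\varphi_\infty$, and then establish this vanishing using the two-elements hypothesis on $\supp{\mu}$. First I would use that finite first moment together with non-Liouvilleness of $(B,\mu_B)$ forces, via the entropy criterion of Derriennic and Kaimanovich--Vershik combined with Guivarc'h's fundamental inequality $h_{\mu_B} \le \ell v$, both $h_{\mu_B} > 0$ and positive speed $\ell > 0$. A standard Shannon--McMillan--Breiman-type argument (applying Kingman to $\{-\log p_n^{\varphi_\infty}(X_n)\}_{n \ge 1}$ under $\P^{\varphi_\infty}$, as already used in the paper to introduce $h(B,\mu_B\mid\varphi_\infty)$) shows that $-n^{-1}\log p_n^{\varphi_\infty}(X_n) \to h(B,\mu_B\mid\varphi_\infty)$ almost surely.

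Given this convergence, the construction of $Q_n$ is routine \emph{provided} $h(B,\mu_B\mid \varphi_\infty) = 0$ a.s. Indeed, for each $\delta>0$, define
\[
Q_n^{(\delta)} \coloneqq \{x\in B : p_n^{\varphi_\infty}(x) \ge e^{-\delta n}\}.
\]
Since the total mass is $1$, one has $|Q_n^{(\delta)}| \le e^{\delta n}$, and by the a.s.\ convergence with limit $0$, $\P^{\varphi_\infty}(X_n\in Q_n^{(\delta)}) \to 1$ for every fixed $\delta$. A diagonal argument then produces $\delta_n \searrow 0$ slowly enough that $Q_n \coloneqq Q_n^{(\delta_n)}$ satisfies $n^{-1}\log|Q_n| \to 0$ and $\P^{\varphi_\infty}(X_n\in Q_n) \to 1$, which is in fact stronger than the $\limsup > 0$ required by the lemma.

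The heart of the proof, and the main obstacle, is therefore showing $h(B,\mu_B\mid \varphi_\infty) = 0$ almost surely. This is where the hypothesis that $\langle \supp{\mu}\rangle_{+}$ contains two distinct elements $g_1 \ne g_2$ with $\pi_B(g_1) = \pi_B(g_2) = y$ should enter: writing $g_i=(f_i,y)$ with $f_1\ne f_2$ and choosing $b_0\in B$ with $f_1(b_0)\ne f_2(b_0)$, a choice between $g_1$ and $g_2$ along the trajectory leaves a distinguishing signature in the lamp at $X_{\text{prev}}\cdot b_0$, and by stabilization this signature is permanently recorded in $\varphi_\infty$ whenever that lamp is not overwritten. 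My strategy would be to argue by contradiction: if $h(B,\mu_B\mid \varphi_\infty) > h_0 > 0$ on a positive measure set, then the typical conditional distribution of $X_n$ given $\varphi_\infty$ is supported on $\approx e^{h_0 n}$ positions, while on the other hand a positive density of lamp-modification times are ``permanent'' (using transience of the base walk) and hence encoded in $\varphi_\infty$, producing so many distinguishable trajectories that one violates $\sum_x p_n^{\varphi_\infty}(x) = 1$.

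The delicate part of this last step will be quantifying the information carried by the permanently-recorded lamp modifications in terms of the \emph{conditional} measure $\P^{\varphi_\infty}$ rather than the unconditioned $\P$; this should go through a Borel--Cantelli analysis of the revisit times for the base walk combined with the stabilization hypothesis, and is where finite first moment becomes decisive by controlling the typical distance between the current position of the walk and the lamps that have stabilized. This is exactly the point at which I expect technical difficulties, since the conditional law $\P^{\varphi_\infty}$ destroys the Markov structure of the base walk and one must instead work with the cocycle-style encoding that $\varphi_\infty$ itself provides.
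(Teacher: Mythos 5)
There is a genuine gap, and it is structural: your proposed reduction inverts the logical order of the paper's argument and then leaves the resulting hard step unproved. You propose to first establish $h(B,\mu_B\mid\varphi_\infty)=0$ almost surely and then read off the sets $Q_n$ as superlevel sets of $p_n^{\varphi_\infty}$. But in the paper this lemma is precisely the \emph{input} to the proof that $h(B,\mu_B\mid A^B)=0$ (Lemma \ref{lem: hAB 0}): there one assumes $h>0$ on a positive-measure set, notes that $X_n$ then typically lands where $p_n^{\varphi_\infty}<e^{-nh/2}$, and contradicts this with a set $Q_n$ of subexponential size that $X_n$ hits with probability bounded away from zero. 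Your sketch of the contradiction (``so many distinguishable trajectories that one violates $\sum_x p_n^{\varphi_\infty}(x)=1$'') is essentially that same argument, but it cannot close without already having the sets $Q_n$ in hand — which is exactly what the lemma is supposed to produce. You acknowledge the difficulty yourself at the end, so the heart of the lemma is not established.

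What you are missing is that $Q_n$ is constructed \emph{geometrically}, with no entropy input at all beyond positive speed. The paper's proof runs as follows. Non-Liouvilleness plus Guivarc'h gives speed $\ell>0$, so $X_n$ eventually lies in the annulus $(\ell-\varepsilon)n\le|b|_S\le(\ell+\varepsilon)n$. The two-distinct-elements hypothesis is used to show that, uniformly in $n$, with probability at least $\min\{\mu^{*k}(g),\mu^{*k}(g')\}>0$ under $\P^{\varphi_\infty}$ the position $X_n$ lies within a fixed distance $D$ of $\supp{\varphi_\infty}$: conditioning on all increments except a block that realizes either $g$ or $g'$, the two outcomes differ in $\varphi_\infty$ only inside the $D$-ball around $X_n$, so at least one of them forces a lit lamp there. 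One then sets $Q_n$ to be the intersection of the annulus with the $D$-neighborhood of $\supp{\varphi_\infty}$; item (1) follows from the two facts above. For item (2) one needs $|Q_n|$ to grow subexponentially, and this is where the finite first moment enters decisively a second time: by Kingman (via \cite[Lemma 2.1]{Erschler2011}), $|\supp{\varphi_n}|/n\to C$, and combined with the bound $\varepsilon n$ on the range of each lamp increment one shows that $\supp{\varphi_\infty}$ meets the annulus in only $O(n)$ points, whence $\frac1n\log|Q_n|\to 0$. None of these three ingredients — the positive-probability proximity of $X_n$ to $\supp{\varphi_\infty}$, the linear growth of the support, or the annulus confinement — appears in a usable form in your proposal, and without them the lemma does not follow.
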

\begin{proof}
	Let $S$ be an arbitrary finite generating set of $B$ and let us denote by $d_S(\cdot,\cdot)$ and $|\cdot|_S$ the associated word metric and word length. Given an element $b\in B$ and a non-empty subset $F\subseteq B$, we denote $d_S(b,F)\coloneqq \min\{d_S(b,f)\mid f\in F\}$.
	
	We first note that the speed $\ell$ of the $\mu_B$-random walk, as defined in Equation \eqref{eq: speed rw}, satisfies that $\P$-almost surely we have that for every $n$ large enough,
	\begin{equation}\label{item: position}
		(\ell-\varepsilon) n\le |X_n|_S\le (\ell +\varepsilon)n.
	\end{equation} 
	Additionally, the finite first moment hypothesis implies that $\P$-almost surely for all $n$ large enough,
	\begin{equation}\label{item: lamp increments}
		\text{the lamp increment }f_{n} \text{ only modifies lamps at a }d_S\text{-distance at most }\varepsilon n.
	\end{equation}
	
	Since $\mu$ has a finite first moment and is adapted, it is a consequence of Kingman's subadditive ergodic theorem that there exists $C\ge 0$ such that
\( 
\lim_{n\to \infty}\frac{|\supp{\varphi_n}|}{n}=C.
\)
This is proved in \cite[Lemma 2.1]{Erschler2011} (see also \cite[Theorem 3.1]{Gilch2008}).
Then, it holds that that $\P$-almost surely for every $n$ large enough, we have
	\begin{equation} \label{item: support lamps}
		(C-\varepsilon)n\le |\supp{\varphi_n}|\le (C+\varepsilon)n
	\end{equation}

	Let us choose $N\ge 1$ large enough so that Equations \eqref{item: position}, \eqref{item: lamp increments} and \eqref{item: support lamps} hold for all $n\ge N$. 
	
	We will now argue that with positive probability, the position of the random walk $X_n$ in the base group is close to a position $b\in B$ on which the limit lamp configuration is turned on: $\varphi_{\infty}(b)\neq e_A$. 
	
	Since we are assuming that $\langle\supp{\mu}\rangle_{+}$ contains two distinct elements with the same projection to $B$, there exist $k\ge 1$ and $g,g^{\prime } \in \supp{\mu^{*k}}$ with the same projection to $B$, and with lamp configurations $f,f^\prime\in \bigoplus_{B}A$ such that $f\neq f^{\prime}$. Let $D>0$ be large enough such that $\supp{f}$ and $\supp{f^{\prime}}$ are contained in the ball of radius $D$ centered at $e_B$. 
	
		
		Now let us look at sample paths $w=(w_1,w_2,\ldots) \in (A\wr B)^{\infty}$, and condition on all of the increments except for those that occur at instants $n+1,n+2,\ldots,n+k$, and such that the product of the increments $g_{n+1}g_{n+2}\cdots g_{n+k}$ is either $g$ or $g^{\prime}$. For such sample paths, the values of the limit lamp configuration $\varphi_{\infty}$ are always the same except possibly at positions within the ball of radius $D$ of $X_n$. Since the lamp configurations of $g$ and $g^{\prime}$ are distinct, then in at least one of the two limit lamp configurations there will be $b\in B$ with $d_S(X_n,b)\le D$ and with $\varphi_{\infty}(b)\neq e_A$. This implies that for all $n\ge 1$ we have $	\P^{\varphi_{\infty}}\Big(d_S(X_n,\supp{\varphi_{\infty}})\le D\Big)\ge \min\{ \mu^{*k}(g),\mu^{*k}(g^{\prime}) \}>0 $, and hence
	
	\begin{equation}\label{eq: Xn is close to a lit lamp} \limsup_{n\to \infty}\P^{\varphi_{\infty}}\Big(d_S(X_n,\supp{\varphi_{\infty}})<D\Big) >0.
		\end{equation}

%
%
%
	
	Let us define for every $n\ge 1$ the finite subset $Q_n\subseteq B$ as the set of elements $b\in B$ such that $(\ell-\varepsilon)n\le|b|_S\le (\ell+\varepsilon)n$, and $d_S(b,\supp{\varphi_\infty})<D$. Item \eqref{item: position} above together with Equation \eqref{eq: Xn is close to a lit lamp} imply that $\limsup_{n\to \infty}\P^{\varphi_{\infty}}(X_n\in Q_n)>0$. It remains to estimate the size of $Q_n$ for $n$ large enough. 
	
	Let us denote the ball of radius $r$ in $B$ by $\mathrm{Ball}(r)\coloneqq \{b\in B\mid |b|_S\le r\}$. Note that any element $b\in Q_n$ is at distance at most $D$ of some $x\in \supp{\varphi_{\infty}}$, such that $(\ell-\varepsilon)n-D\le |x|_S\le (\ell+\varepsilon)n+D$. Thus, we have that 
	\begin{equation}\label{eq: sets balls support} |Q_n|\le \left|\supp{\varphi_{\infty}}\cap \Big(\mathrm{Ball}((\ell+\varepsilon)n +D)\backslash\mathrm{Ball}(\ell-\varepsilon)n -D) \Big)\right|. \end{equation}
	
	Recall that for large enough $n$, Equations \eqref{item: position}, \eqref{item: lamp increments} and \eqref{item: support lamps} above hold. Thanks to this, the last instant $k_1$ that some lamp inside $\mathrm{Ball}((\ell+\varepsilon)n +D)$ is modified satisfies $(\ell-\varepsilon)k_1-\varepsilon n\le (\ell+\varepsilon)n+D$, so that $ k_1\le \frac{\ell+2\varepsilon}{\ell-\varepsilon}n+\frac{D}{\ell-\varepsilon}. $ Similarly, any modification to a lamp outside of $\mathrm{Ball}(\ell-\varepsilon)n -D)$ can only occur after the time instant $k_2$ that satisfies $k_2(\ell+\varepsilon)+\varepsilon n\ge (\ell-\varepsilon)n-D$, so that $ k_2\ge \frac{\ell-2\varepsilon}{\ell+\varepsilon}n-\frac{D}{\ell+\varepsilon}. $
	
	We conclude from the above together with Item \eqref{item: support lamps} that for $n$ large enough, the set $\supp{\varphi_{\infty}}$ contains at most $(C+\varepsilon)\left(\frac{\ell+2\varepsilon}{\ell-\varepsilon}n+\frac{D}{\ell-\varepsilon} \right)$ elements inside $\mathrm{Ball}((\ell+\varepsilon)n +D)$, and at least $(C-\varepsilon)\left(\frac{\ell-2\varepsilon}{\ell+\varepsilon}n-\frac{D}{\ell+\varepsilon} \right)$ inside $\mathrm{Ball}((\ell-\varepsilon)n -D)$.
	
	Now we can use Equation \eqref{eq: sets balls support} to obtain
	\begin{align*}
		|Q_n|&\le (C+\varepsilon)\left(\frac{\ell+2\varepsilon}{\ell-\varepsilon}n+\frac{D}{\ell-\varepsilon} \right) -(C-\varepsilon)\left(\frac{\ell-2\varepsilon}{\ell+\varepsilon}n-\frac{D}{\ell+\varepsilon} \right)=C_1n+C_2,
	\end{align*}
	for some appropriate constants $C_1,C_2>0$ whose explicit value is not relevant. From this we obtain that $\lim_{n\to \infty}\frac{1}{n}\log|Q_n|=0$, hence finishing the proof.
\end{proof}
\begin{lem}\label{lem: hAB 0}
	We have $h\left(B,\mu_B\mid A^B\right)=0$.
\end{lem}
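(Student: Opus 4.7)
The plan is to apply Equation \eqref{eq: relation between entropy AB and entropy varphiinfty} to reduce the statement $h(B,\mu_B\mid A^B)=0$ to the pointwise statement $h(B,\mu_B\mid \varphi_\infty)=0$ for $\nu_{\mathcal{L}}$-almost every $\varphi_\infty\in A^B$. If the induced walk on $B$ is Liouville the claim is immediate from $h_{\mu_B}=0$ together with the inequality $H_{A^B}(X_n)\le H(X_n)$ (Lemma \ref{lem: conditional entropy is at most the usual entropy}); following the discussion preceding the statement I would therefore assume that the induced walk on $B$ is non-Liouville, so that $\ell>0$ and Lemma \ref{lem: sets for finite first moment entropy} applies.

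Next I would fix a $\varphi_\infty\in A^B$ in a full-measure set on which $h_0\coloneqq h(B,\mu_B\mid \varphi_\infty)$ is well-defined as the $\P^{\varphi_\infty}$-almost sure limit $-\lim_n\tfrac{1}{n}\log p_n^{\varphi_\infty}(X_n)$, and on which the finite subsets $Q_n\subseteq B$ provided by Lemma \ref{lem: sets for finite first moment entropy} exist. The heart of the argument is a Shannon--McMillan-style pigeonhole: assuming for contradiction that $h_0>0$, fix $\varepsilon\in(0,h_0)$ and define the collection of ``large atoms''
\[
T_n\coloneqq\{b\in B:p_n^{\varphi_\infty}(b)>e^{-n(h_0-\varepsilon)}\}.
\]
The almost-sure convergence gives $\P^{\varphi_\infty}(X_n\in T_n)\to 0$, while the positive $\limsup$ of $\P^{\varphi_\infty}(X_n\in Q_n)$, call it $\delta>0$, forces $\P^{\varphi_\infty}(X_n\in Q_n\setminus T_n)\ge \delta/2$ for infinitely many $n$.

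The contradiction then arises by comparing these two estimates: since each element of $Q_n\setminus T_n$ has mass at most $e^{-n(h_0-\varepsilon)}$, we obtain $|Q_n|\ge(\delta/2)\,e^{n(h_0-\varepsilon)}$ for infinitely many $n$, forcing
\[
\limsup_n\tfrac{1}{n}\log|Q_n|\ge h_0-\varepsilon>0.
\]
This contradicts Item~(2) of Lemma \ref{lem: sets for finite first moment entropy}, so $h_0=0$ $\nu_{\mathcal{L}}$-almost surely, and integrating via \eqref{eq: relation between entropy AB and entropy varphiinfty} yields $h(B,\mu_B\mid A^B)=0$.

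The main substance of the argument has already been absorbed into the construction of the sets $Q_n$ in Lemma \ref{lem: sets for finite first moment entropy}; the present step is a routine application of the pointwise asymptotic for $-\tfrac{1}{n}\log p_n^{\varphi_\infty}(X_n)$ combined with the subexponential covering those sets provide, and I do not foresee any serious obstacle beyond the careful bookkeeping of the two inequalities above.
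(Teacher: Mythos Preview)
Your proposal is correct and follows essentially the same approach as the paper: both argue by contradiction, fix a $\varphi_\infty$ with $h_0>0$, invoke the almost-sure Shannon--McMillan convergence of $-\tfrac{1}{n}\log p_n^{\varphi_\infty}(X_n)$, and compare with the subexponential sets $Q_n$ from Lemma~\ref{lem: sets for finite first moment entropy}. The only cosmetic difference is that the paper defines the typical set $E_n=\{b:p_n^{\varphi_\infty}(b)<e^{-nh_0/2}\}$ and shows $\P^{\varphi_\infty}(X_n\in E_n\cap Q_n)\le |Q_n|e^{-nh_0/2}\to 0$ to contradict $\limsup_n\P^{\varphi_\infty}(X_n\in Q_n)>0$, whereas you phrase the same pigeonhole via the complementary set $T_n$ and contradict the subexponential growth of $|Q_n|$ instead.
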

\begin{proof}
	Looking for a contradiction, let us suppose that $h\left(B,\mu_B\mid A^B\right)>0$. Thanks to Equation \eqref{eq: relation between entropy AB and entropy varphiinfty}, it must hold that for $\varphi_{\infty}$ in a measurable subset $T\subseteq A^B$ of positive $\nu_{\mathcal{L}}$-measure we have $h\coloneqq h\left(B,\mu_B\mid \varphi_{\infty}\right)>0$.
	
	For $\varphi_{\infty}\in T$ and  $n\ge 1$, let us define the set $
		E_n\coloneqq \left \{ b\in B\mid p^{\varphi_{\infty}}_n(b)< e^{-nh/2} \right \}.$ Since for $\P^{\varphi_{\infty}}$-almost-every trajectory $\{(\varphi_n,X_n)\}_{n\ge 1}$ we have $\displaystyle\lim_{n\to \infty}\frac{1}{n}\log p^{\varphi_{\infty}}_n(X_n)=-h$,  then it holds that $\lim_{n\to \infty}\P^{^{\varphi_{\infty}}}(X_n\in E_n)=1$ for every $\varphi_{\infty} \in T$.
	
	By virtue of Lemma \ref{lem: sets for finite first moment entropy} we can find finite subsets $Q_n\subseteq B$, $n\ge 1$, such that \begin{equation*}\lim_{n\to \infty}\frac{1}{n}\log|Q_n|= 0\text{ and }\limsup_{n\to \infty} \P^{\varphi_{\infty}}(X_n\in Q_n)>0.
	\end{equation*}
	Then, we see that $\P^{\varphi_{\infty}}\left(X_n\in E_n\cap Q_n\right)=\sum_{b\in E_n \cap Q_n}\P^{\varphi_{\infty}}\left(X_n=b\right)\le |Q_n|e^{-nh/2}.$ 	However, we have that $\lim_{n\to \infty} |Q_n|e^{-nh/2}=0$. Indeed, 
	\begin{equation*}
		\log(|Q_n|e^{-nh/2})=n\left( \frac{1}{n}\log|Q_n| -\frac{h}{2} \right)\xrightarrow[n\to \infty]{}-\infty.
	\end{equation*}
	This implies that $\lim_{n\to \infty}\P^{\varphi_{\infty}}\left(X_n\in E_n\cap Q_n\right)=0$. Furthermore, since we have that $\lim_{n\to \infty}\P^{^{\varphi_{\infty}}}(X_n\in E_n)=1$, it must hold that $\lim_{n\to \infty}\P^{\varphi_{\infty}}\left(X_n\in Q_n\right)=0$. This contradicts our choice of the sets $Q_n$.
\end{proof}


\begin{proof}[Proof of Proposition \ref{prop: identification of B with finite first moment}]
	
	Thanks to Lemma \ref{lem: hAB 0}, we have that 
	\( \lim_{n\to \infty}\frac{H_{A^B}(X_n)}{n}=0. \) This implies that for every $\varepsilon>0$, there exists $T\ge 1$ such that $	H_{A^B}(X_{t_0})<\varepsilon t_0$ for every $t_0\ge T$. Afterward, the proof follows analogously to the proof of Lemma \ref{lem: coarse traj has small entropy}. Indeed, let $n\ge 1$, $t_0\ge T$, and denote $s=\lfloor n/t_0\rfloor.$ Then
	\begin{equation*}
		H_{A^B}(X_{t_0},X_{2t_0},\ldots, X_{st_0})\le\sum_{j=0}^{s-1}H_{A^B}\left(X_{(j+1)t_0}\mid X_{jt_0}\right)
		= sH_{A^B}(X_{t_0})
		< s\varepsilon t_0\le \varepsilon n.
	\end{equation*}		
\end{proof}
We remark that the assumption that $\langle \supp{\mu}\rangle_{+}$ contains two distinct elements with the same projection to $B$ in Theorem \ref{thm: Poisson for finite first moment} and Proposition \ref{prop: identification of B with finite first moment} is necessary, as the following example shows.
\begin{exmp}\label{example: conjugate base group}
	Consider the wreath product $\Z/2\Z\wr F_2$, where $F_2$ is the free group of rank $2$. We denote $\Z/2\Z=\{[0]_2,[1]_2\}$, where $[0]_2\in \Z/2\Z$ is the identity element. Denote by $\delta_0:F_2\to \Z/2\Z$ the lamp configuration defined by $\delta_0(e_{F_2})=[1]_2$ and $\delta_0(x)=[0]_2$ for $x\in F_2\backslash \{e_{F_2}\}$. Let us also denote by $\{a,b\}$ a free generating set of $F_2$. Let $\mu$ be the probability measure on $\Z/2\Z\wr F_2$ defined by $\mu(\delta_0 x \delta_0)=1/4$ for each $x \in \{ a,a^{-1},b,b^{-1}\}$. Then the projection of the $\mu$-random walk on $\Z/2\Z\wr F_2$ to $F_2$ is a simple random walk, and in particular this implies that the Poisson boundary is non-trivial. However, note that the limit lamp configuration $\varphi_{\infty}$ along any sample path satisfies that $\varphi_{\infty}(x)=[0]_2$ for all $x\in F_2\backslash\{ e_{F_2}\}$. The $\mu$-boundary of infinite lamp configurations is then trivial, and hence it is not the Poisson boundary.
\end{exmp}

\section{Groups of the form $F_d/[N,N]$ and free solvable groups} \label{section: free solvable groups}
\subsection{The Magnus embedding}\label{subsection: magnus}
Let $F_d$ be a free group of rank $d\ge 1$, and consider a normal subgroup $N\lhd F_d$. We now recall the Magnus embedding of $F_d/[N,N]$ into the wreath product $\Z^d\wr F_d/N$. We follow the expositions of \cite[Subsections 2.2 -- 2.5]{MyasnikovRomankovUshakovVershik2010} and \cite[Section 2]{SaloffCosteZheng2015}, which are our main references for this subsection.

We first introduce the notation that we will use. We denote by $\pi:F_d\to F_d/N$, $\widetilde{\pi}:F_d/[N,N]\to F_d/N$ and $\theta:F_d\to F_d/[N,N]$ the respective quotient maps. Note that $\pi = \widetilde{\pi} \circ \theta$. We denote by $\Z(F_d/N)$ and $\Z(F_d)$ the group rings with integer coefficients of $F_d/N$ and $F_d$, respectively. Let us also denote by $\pi:\Z(F_d)\to \Z(F_d/N)$ the linear extension of the quotient map $\pi:F_d\to F_d/N$.

Let $T$ be a free left $\Z(F_d/N)$-module of rank $d$ with basis $\{t_1,\ldots,t_d\}$, and consider the group of matrices
\[
M(F_d/N)\coloneqq \left\{ \begin{pmatrix}
	g & t \\ 0 & 1
\end{pmatrix}  \mid g\in F_d/N \text{ and }t\in T \right\},
\]
where the group operation is matrix multiplication. One can verify with a direct computation that $T$ is isomorphic to the direct sum $\bigoplus_{F_d/N}\Z^d$, and that $M(F_d/N)$ is isomorphic to $\Z^d\wr F_d/N$. The following theorem describes the Magnus embedding of $F_d/[N,N]$ into $\Z^d\wr F_d/N$.
\begin{thm}[{\cite{Magnus1939}}]\label{thm: original formulation Magnus embedding} Let $F_d$ be a free group of rank $d\ge 1$, and consider $X=\{x_1,\ldots,x_d\}$ a free basis of $F_d$. Let $N\lhd F_d$ be a normal subgroup and denote by $\pi:F_d\to F_d/N$ the quotient map. Then the homomorphism $\phi:F_d\to M(F_d/N)$ defined by
	\begin{equation*}
		\phi(x_i)\coloneqq \begin{pmatrix}
			\pi(x_i) & t_i \\
			0 & 1
		\end{pmatrix}, \ \ \ \text{ for }i=1,\ldots,d,
	\end{equation*}
satisfies $\mathrm{ker}(\phi)=[N,N]$. Hence, $\phi$ induces a monomorphism $\widetilde{\phi}:F_d/[N,N]\to M(F_d/N)$.
\end{thm}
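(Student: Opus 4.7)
The plan is to prove $\ker(\phi)=[N,N]$ by first rewriting the upper-right entry of $\phi(w)$ via Fox derivatives, and then treating the two inclusions separately. Recall the \emph{Fox derivatives} $\partial_1,\ldots,\partial_d \colon \Z(F_d)\to \Z(F_d)$: the unique $\Z$-linear maps satisfying $\partial_i(x_j)=\delta_{ij}$, $\partial_i(e_{F_d})=0$, and the Leibniz-type rule $\partial_i(uv)=\partial_i(u)+u\,\partial_i(v)$ for all $u,v\in F_d$. A short induction on word length, using matrix multiplication in $M(F_d/N)$, yields
\[
\phi(w)=\begin{pmatrix} \pi(w) & \sum_{i=1}^{d}\pi(\partial_i w)\,t_i \\ 0 & 1 \end{pmatrix},\qquad w\in F_d.
\]

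The inclusion $[N,N]\subseteq \ker(\phi)$ follows from a short abelianity argument: for $n\in N$ we have $\pi(n)=e_{F_d/N}$, so $\phi(n)$ has the form $\begin{pmatrix} 1 & t \\ 0 & 1 \end{pmatrix}$ with $t\in T$. Such matrices form an abelian subgroup of $M(F_d/N)$, isomorphic to the additive group $T$, so $\phi(N)$ is abelian and $\phi([N,N])=\{I\}$.

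The reverse inclusion is the substantive part. Given $w\in \ker(\phi)$, the $(1,1)$-entry forces $w\in N$, and the freeness of $\{t_1,\ldots,t_d\}$ as a left $\Z(F_d/N)$-basis of $T$ combined with the vanishing of the $(1,2)$-entry forces $\pi(\partial_i w)=0$ in $\Z(F_d/N)$ for every $i=1,\ldots,d$. The proof therefore reduces to the classical Fox calculus fact: for $n\in N$, one has $n\in [N,N]$ if and only if $\pi(\partial_i n)=0$ for all $i$. This is the main obstacle. The approach is to observe that the map $\Psi\colon N\to T$, $n\mapsto \sum_i\pi(\partial_i n)\,t_i$, is a genuine group homomorphism (the twist $\pi(u)\pi(\partial_i v)$ in the Leibniz rule collapses once $\pi(u)=e$), with kernel containing $[N,N]$ by the previous paragraph; injectivity of the induced map $\overline{\Psi}\colon N/[N,N]\to T$ is then established by identifying $\overline{\Psi}$ with the connecting map obtained from the Fox free resolution
\[
0\to \Z(F_d)^{d}\to \Z(F_d)\to \Z\to 0
\]
(whose first arrow sends the $i$-th basis vector to $x_i-1$), after applying the functor $\Z(F_d/N)\otimes_{\Z(F_d)}(-)$; the freeness of $T\cong \Z(F_d/N)^{d}$ together with the standard Crowell exact sequence then yields injectivity. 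This completes the proof that $\widetilde{\phi}$ is an injective homomorphism $F_d/[N,N]\hookrightarrow M(F_d/N)\cong \Z^{d}\wr F_d/N$.
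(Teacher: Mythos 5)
Your argument is correct, but be aware that the paper offers no proof of this statement at all: it is quoted as a classical result of Magnus with only a citation, so there is no internal argument to compare against. Within the paper's own toolkit the kernel computation would in fact be immediate from the two Fox theorems it also quotes without proof: the Fox-derivative form of the embedding (Theorem \ref{thm: FOX formulation Magnus embedding}) identifies the upper-right entry of $\phi(w)$ with $\sum_{i}\pi(\partial_i w)\,t_i$, and Fox's criterion states that $w\in[N,N]$ if and only if $\pi(\partial_i w)=0$ for all $i$, so $\ker(\phi)=[N,N]$ follows from the freeness of $T$ over $\Z(F_d/N)$. What you do differently, and what makes your write-up more self-contained, is that you actually prove the hard direction of Fox's criterion instead of quoting it: the observation that $\Psi\colon N\to T$ becomes a genuine homomorphism because the twist $\pi(u)$ collapses on $N$, followed by the homological identification of $\overline{\Psi}$ inside the sequence obtained by tensoring $0\to \Z(F_d)^{d}\to \Z(F_d)\to \Z\to 0$ with $\Z(F_d/N)$. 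This is a legitimate and standard modern route to the Magnus embedding. The computation of $\phi(w)$ by induction on word length, the abelianity argument for $[N,N]\subseteq\ker(\phi)$, and the reduction of the converse to $\pi(\partial_i w)=0$ are all complete. The only compressed steps are the isomorphism $\mathrm{Tor}_1^{\Z(F_d)}\bigl(\Z(F_d/N),\Z\bigr)\cong N/[N,N]$, which needs the change-of-rings identification $\Z(F_d/N)\cong \Z\otimes_{\Z N}\Z(F_d)$ together with $H_1(N;\Z)\cong N/[N,N]$, and the verification that under this identification the connecting map is $n\mapsto(\pi(\partial_i n))_i$; both are standard facts about the relation module, but they should be written out or given a precise reference if this argument were meant to replace the citation.
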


In other words, the Magnus embedding $\widetilde{\phi}:F_d/[N,N]\to M(F_d/N)\cong \Z^d\wr F_d/N$ can be written as $\widetilde{\phi}(g)=(\varphi(g),\widetilde{\pi}(g))$ for each $g\in F_d/[N,N]$, where $\varphi(g)\in \bigoplus_{F_d/N}\Z^d$ and $\widetilde{\pi}:F_d/[N,N]\to F_d/N$ is the quotient map. In order to describe more precisely the map $\varphi:F_d/[N,N]\to \bigoplus_{F_d/N}\Z^d$, we need to introduce Fox derivatives.

Consider the group ring $\Z(F_d)$, and let us denote by $\varepsilon:\Z(F_d)\to \Z$ the extension of the trivial group homomorphism $F_d\to 1$. A map $D:\Z(F_d)\to \Z(F_d)$ is called a \emph{left derivation} if it satisfies:
\begin{itemize}
	\item $D(u+v)=D(u)+D(v)$, for every $u,v\in \Z(F_d)$, and
	\item $D(uv)=D(u)\varepsilon(v)+uD(v)$, for every $u,v\in \Z(F_d)$.
\end{itemize}

The following two theorems are due to Fox \cite{FoxI,FoxII,FoxIII,FoxIV,FoxV}.
\begin{thm}[Fox] Let $F_d$ be a free group of rank $d\ge 1$, and consider $X=\{x_1,\ldots,x_d\}$ a free basis of $F_d$. Then for each $i=1,\ldots,d$ there is a unique left derivation $\partial_i:\Z(F_d)\to \Z(F_d)$ such that 
	\[\partial_i(x_j)=\begin{cases}1, &\text{ if }i=j, \text{ and}\\ 0, &\text{ otherwise.}
	\end{cases}\]

 Furthermore, let $N\lhd F_d$ a normal subgroup and denote by $\pi:F_d\to F_d/N$ the quotient map. Then for every $u\in F_d$ it holds that  $u\in [N,N] \Longleftrightarrow \pi(\partial_i u) = 0 \text{ for every }i=1,\ldots,d.$

\end{thm}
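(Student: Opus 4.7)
The plan is to split the theorem into two tasks: construct the Fox derivatives $\partial_i$ and establish their uniqueness, then characterize $[N,N]$ in terms of the vanishing of $\pi(\partial_i u)$. For uniqueness, any left derivation $D$ must satisfy $D(1)=0$ (applying the Leibniz rule to $1 = 1 \cdot 1$) and $D(u^{-1}) = -u^{-1} D(u)$ for $u \in F_d$ (from $0 = D(uu^{-1}) = D(u)\varepsilon(u^{-1}) + uD(u^{-1})$). The Leibniz rule therefore determines $D$ on every reduced word once its values on the generators $x_1,\ldots,x_d$ are prescribed, and $\Z$-linearity extends it to all of $\Z(F_d)$. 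For existence I would use the standard fact, equivalent to the freeness of $F_d$, that the augmentation ideal $I(F_d) = \ker(\varepsilon)$ is a free left $\Z(F_d)$-module with basis $\{x_i - 1\}_{i=1}^d$. For every $u \in \Z(F_d)$ there are then unique elements $r_i(u) \in \Z(F_d)$ with $u - \varepsilon(u) = \sum_{i=1}^d r_i(u)(x_i - 1)$, and setting $\partial_i u := r_i(u)$ gives a map satisfying the required normalization on $\{x_j\}$; the Leibniz rule is verified by comparing the decompositions of $uv - \varepsilon(uv)$ and $u\bigl(v - \varepsilon(v)\bigr) + \bigl(u - \varepsilon(u)\bigr)\varepsilon(v)$.

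For the forward direction of the characterization, I would first compute on a single commutator: for $a,b \in N$,
\begin{equation*}
\partial_i([a,b]) = \partial_i(a) + a\,\partial_i(b) - aba^{-1}\partial_i(a) - aba^{-1}b^{-1}\partial_i(b).
\end{equation*}
Applying $\pi$, and using $\pi(a) = \pi(b) = 1$ so that $\pi(aba^{-1}) = \pi(aba^{-1}b^{-1}) = 1$, the four terms cancel in pairs. The extension to an arbitrary element of $[N,N]$ then follows by induction on the number of commutators, using the Leibniz rule in the form $\pi(\partial_i(uv)) = \pi(\partial_i u) + \pi(u)\pi(\partial_i v)$ together with the observation that any product of commutators lies in $N$, so $\pi(u) = 1$ along the way.

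For the reverse direction I would argue as follows. If $\pi(\partial_i u) = 0$ for every $i$, then applying $\pi$ to the fundamental identity $u - 1 = \sum_i (\partial_i u)(x_i - 1)$ yields $\pi(u) - 1 = 0$, so automatically $u \in N$. Then, by induction on word length using the Leibniz rule, one verifies the closed form
\begin{equation*}
\phi(u) = \begin{pmatrix} \pi(u) & \sum_{i=1}^d \pi(\partial_i u)\, t_i \\ 0 & 1 \end{pmatrix}
\end{equation*}
for the homomorphism $\phi$ of Theorem~5.2. Under our hypothesis this matrix is the identity, so $u \in \ker(\phi) = [N,N]$.

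The main obstacle is that invoking $\ker(\phi) = [N,N]$ from the stated Magnus embedding to prove the reverse direction is formally circular, since the classical proof of that equality itself uses the Fox derivative characterization of $[N,N]$. To make the proof self-contained, I would reorganize: first prove existence, uniqueness and the Leibniz identity for $\partial_i$; next establish the explicit matrix formula for $\phi$ above, which already gives the inclusion $[N,N] \subseteq \ker(\phi)$ via the forward direction; and finally establish the reverse direction by an independent argument, for instance via the Magnus--Krasner--Kaloujnine realization of $F_d/[N,N]$ inside $\Z^d \wr F_d/N$, which constructs a left inverse to $\widetilde{\phi}$ and hence forces $\ker(\phi) \subseteq [N,N]$. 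With this ordering, both the Magnus embedding theorem and the Fox derivative characterization are obtained together.
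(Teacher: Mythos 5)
The paper does not prove this statement: it is quoted as a classical theorem of Fox, with the proof deferred to the cited references, so there is no in-paper argument to compare against. Judged on its own merits, your proposal is solid for the first half and for the easy implication. The uniqueness argument ($D(1)=0$, $D(u^{-1})=-u^{-1}D(u)$, induction on reduced words), the existence via the fundamental identity $u-\varepsilon(u)=\sum_i (\partial_i u)(x_i-1)$, the commutator computation $\partial_i([a,b])=\partial_i(a)+a\,\partial_i(b)-aba^{-1}\partial_i(a)-aba^{-1}b^{-1}\partial_i(b)$ with the pairwise cancellation after applying $\pi$, and the induction over products of commutators are all correct. The observation that $\pi(\partial_i u)=0$ for all $i$ already forces $u\in N$ is also correct and is a necessary first step of the converse.

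The gap is in the reverse implication, and you have only relocated the difficulty rather than closed it. You rightly flag that invoking $\ker(\phi)=[N,N]$ is circular, but the proposed repair --- ``construct a left inverse to $\widetilde{\phi}$ via the Magnus--Krasner--Kaloujnine realization'' --- is not a proof: producing a left inverse to $\widetilde{\phi}$ on its image is exactly the assertion that $\widetilde{\phi}$ is injective, i.e.\ that $\ker(\phi)\subseteq[N,N]$, which is the statement you are trying to prove. The Krasner--Kaloujnine theorem embeds the extension $1\to N/[N,N]\to F_d/[N,N]\to F_d/N\to 1$ into $(N/[N,N])\wr(F_d/N)$, with lamp group $N/[N,N]$ rather than $\Z^d$; passing from there to $\Z^d\wr F_d/N$ requires embedding the relation module $N/[N,N]$ into the free module $\bigoplus_{F_d/N}\Z^d$, which is again equivalent to the hard direction. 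A genuine argument for $u\in N$, $\pi(\partial_i u)=0\ \forall i \Rightarrow u\in[N,N]$ is available from the flow picture the paper sets up in its Subsection on flows: the Cayley graph of $F_d/N$ is the quotient of the tree $\mathrm{Cay}(F_d)$ by $N$, so $\pi_1(\mathrm{Cay}(F_d/N))\cong N$ and $H_1(\mathrm{Cay}(F_d/N))\cong N/[N,N]$; the loop traced by $u$ has homology class recorded exactly by the edge flow $\mathfrak{f}_u$, and by the identity $\pi(\partial_i u)=\sum_{g}\mathfrak{f}_u((g,gx_i,x_i))\,g$ the vanishing of all $\pi(\partial_i u)$ means $\mathfrak{f}_u=0$, hence $u$ is trivial in $H_1$, hence $u\in[N,N]$. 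Some argument of this kind (or the equivalent relation-module injectivity) must be supplied for the proof to be complete.
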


\begin{thm}[Fox]\label{thm: FOX formulation Magnus embedding}  Let $F_d$ be a free group of rank $d\ge 1$ and let $N\lhd F_d$ a normal subgroup. Denote by $\pi:F_d\to F_d/N$ and by $\theta: F_d\to F_d/[N,N]$ the associated quotient maps. Then the Magnus embedding $\widetilde{\phi}:F_d/[N,N]\to M(F_d/N)$ can be expressed as 
	\begin{equation*}
		\widetilde{\phi}(g)=\begin{pmatrix}
			\widetilde{\pi}(g) & \sum_{i=1}^d \pi(\partial_i u)\cdot t_i\\ 0 & 1
		\end{pmatrix}, \ \ \ \text{ for }g\in F_d/[N,N],
	\end{equation*}
	
	where $u\in F_d$ is any element such that $\theta(u)=g$.
\end{thm}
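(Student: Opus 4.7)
The plan is to define an auxiliary map $\psi\colon F_d\to M(F_d/N)$ by the formula on the right-hand side of the statement, verify directly that $\psi$ is a homomorphism, check that it coincides with $\phi$ on the free generators $x_1,\ldots,x_d$, and then invoke both the universal property of $F_d$ and the kernel computation $\ker(\phi)=[N,N]$ from Theorem \ref{thm: original formulation Magnus embedding} to conclude the desired formula for $\widetilde{\phi}$.

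Concretely, for each $u\in F_d$ I would set
\[
\psi(u)\coloneqq \begin{pmatrix} \pi(u) & \sum_{i=1}^d \pi(\partial_i u)\cdot t_i\\ 0 & 1 \end{pmatrix}.
\]
The main computation is to check that $\psi(uv)=\psi(u)\psi(v)$ for all $u,v\in F_d$. Carrying out the matrix product of $\psi(u)$ and $\psi(v)$ yields
\[
\begin{pmatrix} \pi(u)\pi(v) & \pi(u)\sum_{i=1}^d \pi(\partial_i v)\cdot t_i+\sum_{i=1}^d \pi(\partial_i u)\cdot t_i\\ 0 & 1 \end{pmatrix},
\]
so the equality with $\psi(uv)$ reduces to proving $\pi(\partial_i(uv))=\pi(\partial_i u)+\pi(u)\pi(\partial_i v)$ for each $i$. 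This is immediate from the left-derivation property $\partial_i(uv)=\partial_i(u)\varepsilon(v)+u\,\partial_i(v)$, combined with the fact that $\varepsilon(v)=1$ for every group element $v\in F_d$ and that $\pi$ is a ring homomorphism $\mathbb{Z}(F_d)\to \mathbb{Z}(F_d/N)$.

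Next, evaluating on the free basis gives
\[
\psi(x_j)=\begin{pmatrix}\pi(x_j) & t_j\\ 0 & 1\end{pmatrix}=\phi(x_j),
\]
thanks to $\partial_i(x_j)=\delta_{ij}$. Since $F_d$ is free on $\{x_1,\ldots,x_d\}$, any two homomorphisms agreeing on this set must coincide, so $\psi=\phi$. Finally, Theorem \ref{thm: original formulation Magnus embedding} gives $\widetilde{\phi}\circ\theta=\phi$, hence for any $u\in F_d$ with $\theta(u)=g$ we have $\widetilde{\phi}(g)=\phi(u)=\psi(u)$, which is the claimed formula; the well-definedness of the right-hand side in terms of the coset $g$ is automatic from $\ker(\phi)=[N,N]$.

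The only real obstacle is the matrix-multiplication step, which amounts to matching the Leibniz rule for $\partial_i$ with the left action of the first row on the column vector $\sum_i \pi(\partial_i v)\cdot t_i$; beyond this bookkeeping there is no deeper conceptual content.
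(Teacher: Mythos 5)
Your proof is correct. The paper itself gives no proof of this statement -- it is quoted as a classical result of Fox -- so there is nothing to compare against, but your verification is the standard one and all the steps check out: the only substantive point is indeed the identity $\pi(\partial_i(uv))=\pi(\partial_i u)+\pi(u)\,\pi(\partial_i v)$, which follows from the Leibniz rule $\partial_i(uv)=\partial_i(u)\varepsilon(v)+u\,\partial_i(v)$ together with $\varepsilon(v)=1$ for $v\in F_d$ and the fact that the linear extension $\pi:\Z(F_d)\to\Z(F_d/N)$ is a ring homomorphism; combined with $\partial_i(x_j)=\delta_{ij}$, freeness of $F_d$, and $\ker(\phi)=[N,N]$ from Theorem \ref{thm: original formulation Magnus embedding}, this yields the claimed formula and its independence of the chosen lift $u$ of $g$.
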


\subsection{Flows}\label{subsection: flows}
We now explain how the elements of the group $F_d/[N,N]$ can be expressed in terms of flows in the Cayley graph of $F_d/N$. We first need to introduce the notation and definitions associated with Cayley graphs and flows.

Let $F_d$ be the free group of rank $d$ and let $N\lhd F_d$ be a normal subgroup. By slightly abusing notation, we will denote by $X=\{x_1,\ldots, x_d\}$ a free generating set of $F_d$, as well as its image on $F_d/N$ via the canonical quotient map $\pi:F_d\to F_d/N$.

The \emph{directed and labeled Cayley graph} $\mathrm{Cay}(F_d/N)$ is the graph with vertex set $V=F_d/N$, and with edges 
\(
\mathrm{Edges}(F_d/N)\coloneqq \{ (g_1,g_2,x)\in V\times V\times \{x_1,\ldots,x_d\}\mid g_2=g_1x \}.
\)
Hence, each edge is of the form $\mathfrak{e}\coloneqq (g,gx_i,x_i)$ for some $g\in F_d/N$ and $i\in \{1,\ldots,d\}$. We call $g=o(\mathfrak{e})$ the \emph{origin} of $\mathfrak{e}$, $gx_i=t(\mathfrak{e})$ the terminus of $\mathfrak{e}$, and $x_i$ the label of $\mathfrak{e}$.
\begin{defn}
	
	\begin{enumerate}
		\item Let $\mathfrak{f}:\mathrm{Edges}(F_d/N) \to \Z$ be a function. For each $g\in F_d/N$ we define the \emph{net flow} $\mathfrak{f}^{*}(g)$ of $\mathfrak{f}$ at $g$ by
		\[
		\mathfrak{f}^{*}(g)\coloneqq\sum_{o(\mathfrak{e})=g}f(\mathfrak{e})-\sum_{t(\mathfrak{e})=g}f(\mathfrak{e}).
		\]
		\item Let $s,t\in F_d/N$. A \emph{flow} with source $s$ and sink $t$ is a function $\mathfrak{f}:\mathrm{Edges}(F_d/N) \to \Z$ such that $f^{*}(g)=0$ for all $g\in F_d/N\backslash \{s,t\}$. If this condition is also satisfied for $g=s$ and $g=t$, we call the flow $\mathfrak{f}$ a \emph{circulation}.
		\item  We say that a flow $\mathfrak{f}$ is \emph{geometric} if the set $\{\mathfrak{e}\in \mathrm{Edges}(F_d/N)\mid \mathfrak{f}(\mathfrak{e})\neq 0 \}$ is finite (i.e.\ $\mathfrak{f}$ is finitely supported), and if $\mathfrak{f}$ is either a circulation or it is a flow with source $s$ and sink $t$ such that $\mathfrak{f}^{*}(s)=1$ and $\mathfrak{f}^{*}(t)=-1$.
	\end{enumerate}
\end{defn}

For each edge $\mathfrak{e}=(g,gx,x)\in \mathrm{Edges}(F_d/N)$, let us introduce its formal inverse $\mathfrak{e}^{-1}=(gx,g,x^{-1})$. Denote by $\mathrm{Edges}(F_d/N)^{\pm 1}$ the set of edges together with their formal inverses.

	A \emph{finite path} $p$ on the Cayley graph $\mathrm{Cay}(F_d/N)$ is a sequence $p=(\mathfrak{e}_1,\mathfrak{e}_2,\dots,\mathfrak{e}_k)$ of edges $\mathfrak{e}_j\in \mathrm{Edges}(F_d/N)^{\pm 1}$, $j=1,\ldots,k$, such that $t(\mathfrak{e}_j)=o(\mathfrak{e}_{j+1})$ for each $j=1,2,\ldots,k-1$. The element $o(\mathfrak{e}_1)$ is called the \emph{origin of $p$} and the element $t(\mathfrak{e}_k)$ is called the \emph{terminus of $p$}. Note that each finite path defines a word in $F_d$, given by concatenating the labels of edges along the path, and reciprocally any word in $F_d$ defines a unique path with origin at the identity element $e_{F_d/N}$.

	Let $p$ be a finite path on the Cayley graph $\mathrm{Cay}(F_d/N)$. Then $p$ defines a geometric flow $\mathfrak{f}_p:\mathrm{Edges}(F_d/N)\to \Z$, given by
	\begin{align*}
	\mathfrak{f}_p(\mathfrak{e})&\coloneqq \text{algebraic number of times that }\mathfrak{e} \text{ is crossed (positively or negatively) along }p\\
	&=(\text{number of times that }\mathfrak{e} \text{ occurs in }p) - (\text{number of times that }\mathfrak{e}^{-1} \text{ occurs in }p),
	\end{align*}
	for each $\mathfrak{e}\in \mathrm{Edges}(F_d/N)$.

For every element $g\in F_d/[N,N]$, choose any word $u\in F_d$ which projects to $g$ and define  the flow associated with $g$ by $\mathfrak{f}_g\coloneqq \mathfrak{f}_{u}.$ The following theorem guarantees that $\mathfrak{f}_g$ is well-defined and does not depend on the choice of the word $u$.

\begin{thm}[{\cite[Theorem 2.7]{MyasnikovRomankovUshakovVershik2010}}]\label{thm: fox flows}
  Let $F_d$ be a free group of rank $d\ge 1$ and let $N\lhd F_d$ be a normal subgroup. Denote by $\theta: F_d\to F_d/[N,N]$ the associated quotient map.
  	Then for any $u,v\in F_d$ it holds that $\theta(u)=\theta(v)$ if and only if $\mathfrak{f}_u=\mathfrak{f}_v$.
\end{thm}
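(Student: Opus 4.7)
The plan is to derive this theorem from the two results of Fox stated just above: the Magnus embedding in its Fox-derivative form \pcref{thm: FOX formulation Magnus embedding}, together with a direct computation identifying $\pi(\partial_i u)\in \Z(F_d/N)$ with the ``labeled-$x_i$ slice'' of the flow $\mathfrak{f}_u$. First, since $\widetilde{\phi}:F_d/[N,N]\to M(F_d/N)$ is a monomorphism, the equality $\theta(u)=\theta(v)$ is equivalent to the equality of the corresponding Magnus matrices. Reading off the bottom-right block and the coefficients in $T$, this is in turn equivalent to the conjunction
\[
\pi(u)=\pi(v)\quad\text{in }F_d/N,\qquad \pi(\partial_i u)=\pi(\partial_i v)\text{ in }\Z(F_d/N)\text{ for each }i=1,\ldots,d.
\]
Thus it suffices to show that these two conditions together are equivalent to $\mathfrak{f}_u=\mathfrak{f}_v$.

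The main step is to compute $\pi(\partial_i u)$ for a word $u=y_1y_2\cdots y_k$ with each $y_j\in\{x_1^{\pm 1},\ldots,x_d^{\pm 1}\}$. Iterating the Leibniz rule yields
\[
\partial_i u \;=\; \sum_{j=1}^{k} (y_1\cdots y_{j-1})\,\partial_i(y_j),
\]
and applying $\partial_i(x_\ell)=\delta_{i\ell}$, $\partial_i(x_\ell^{-1})=-\delta_{i\ell}\,x_\ell^{-1}$ shows that $\pi(\partial_i u)$ is supported on the vertices of $\mathrm{Cay}(F_d/N)$, with the coefficient of $g\in F_d/N$ equal to
\[
\#\{\,j : y_j=x_i,\ \pi(y_1\cdots y_{j-1})=g\,\}\;-\;\#\{\,j : y_j=x_i^{-1},\ \pi(y_1\cdots y_j)=g\,\}.
\]
By construction of $\mathfrak{f}_u$, this is precisely the algebraic number of crossings of the edge $\mathfrak{e}=(g,gx_i,x_i)$, i.e.\ $\mathfrak{f}_u(\mathfrak{e})$. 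Since every edge of $\mathrm{Cay}(F_d/N)$ carries a label in $\{x_1,\ldots,x_d\}$, the collection $\{\pi(\partial_i u)\}_{i=1}^d$ encodes exactly the function $\mathfrak{f}_u$ on $\mathrm{Edges}(F_d/N)$. In particular, $\pi(\partial_i u)=\pi(\partial_i v)$ for every $i$ if and only if $\mathfrak{f}_u=\mathfrak{f}_v$.

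It remains to check that $\mathfrak{f}_u=\mathfrak{f}_v$ forces $\pi(u)=\pi(v)$, so that the second Magnus condition subsumes the first. This follows from the fact that a geometric flow determines its source and sink as the (unique) vertices where the net flow equals $+1$ and $-1$; both paths start at $e_{F_d/N}$, so their common terminus reads off from $\mathfrak{f}_u=\mathfrak{f}_v$ as the vertex of net flow $-1$ (or as $e_{F_d/N}$ itself in the circulation case). Combining the three paragraphs, $\theta(u)=\theta(v)$ if and only if $\mathfrak{f}_u=\mathfrak{f}_v$, as claimed.

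The only delicate point is the sign bookkeeping in the second paragraph: one must match a negative crossing of $\mathfrak{e}=(g,gx_i,x_i)$ with a letter $y_j=x_i^{-1}$ starting at $gx_i$, i.e.\ with $\pi(y_1\cdots y_j)=g$, which is exactly what the formula $\partial_i(x_i^{-1})=-x_i^{-1}$ produces after left-multiplying by $y_1\cdots y_{j-1}$ and projecting. Beyond this verification, every step is either a direct application of the Leibniz rule or an unpacking of the definition of $\mathfrak{f}_u$.
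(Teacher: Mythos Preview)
Your proof is correct. Note, however, that the paper does not supply its own proof of this statement: it is simply quoted as \cite[Theorem 2.7]{MyasnikovRomankovUshakovVershik2010}, so there is no argument in the paper to compare against. Your approach is the natural one given the surrounding material, and in fact your second paragraph reproves Lemma~\ref{lem: fox derivatives in terms of flows} (stated immediately after this theorem in the paper) en route; combining that lemma with the injectivity of the Magnus embedding \pcref{thm: original formulation Magnus embedding} and Remark~\ref{remark: flow completely determines element of FdNN} is precisely the argument the paper would assemble if it had spelled one out.
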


Recall that we denote by $\pi$ the canonical epimorphism $F_d\to F_d/N$ as well as its linear extension $\Z(F_d)\to \Z(F_d/N)$. The following result expresses, for each $i=1,\ldots,d$ and $u\in F_d$, the values $\pi(\partial_i u)$ in terms of the flow $\mathfrak{f}_u$ on the Cayley graph of $F_d/N$ associated with $u$.
\begin{lem}[{\cite[Lemma 2.6]{MyasnikovRomankovUshakovVershik2010}}]\label{lem: fox derivatives in terms of flows}  Let $F_d$ be a free group of rank $d\ge 1$ with a free generating set $\{x_1,\ldots,x_d\}$, and let $N\lhd F_d$ a normal subgroup. Denote by $\pi:\Z(F_d)\to \Z(F_d/N)$ the linear extension of the quotient map $F_d\to F_d/N$. Then for every $u\in F_d$ and any $i=1,\ldots,d$ we have
	\(
	\pi(\partial_i u)=\sum_{g\in F_d/N}\mathfrak{f}_u((g,gx_i,x_i))g.
	\)
\end{lem}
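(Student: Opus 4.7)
The plan is to expand $\partial_i u$ explicitly using the derivation rules, apply $\pi$, and then identify the resulting sum term by term with the edge-crossings of the path traced by $u$ in $\mathrm{Cay}(F_d/N)$.

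First I would write $u=y_1y_2\cdots y_n$ as a word in the letters $y_k\in\{x_1^{\pm 1},\ldots,x_d^{\pm 1}\}$. Since $\varepsilon(v)=1$ for every $v\in F_d$, the product rule reduces to $\partial_i(uv)=\partial_i(u)+u\,\partial_i(v)$, and a straightforward induction on $n$ yields
\begin{equation*}
\partial_i(u)=\sum_{k=1}^n y_1\cdots y_{k-1}\,\partial_i(y_k).
\end{equation*}
The defining property gives $\partial_i(x_j)=\delta_{ij}$; applying $\partial_i$ to the relation $x_j x_j^{-1}=1$ gives $0=\delta_{ij}+x_j\partial_i(x_j^{-1})$, hence $\partial_i(x_j^{-1})=-\delta_{ij}x_j^{-1}$. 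Thus only the indices $k$ with $y_k\in\{x_i,x_i^{-1}\}$ contribute to the sum.

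Next I would apply $\pi$ and reinterpret each contributing term as an edge-crossing. Setting $g_k\coloneqq\pi(y_1\cdots y_k)$ for $k\ge 0$ (with $g_0=e_{F_d/N}$), the word $u$ traces the finite path $g_0,g_1,\ldots,g_n$ in $\mathrm{Cay}(F_d/N)$. When $y_k=x_i$, the $k$-th step is a positive traversal of the edge $\mathfrak{e}=(g_{k-1},g_k,x_i)$ and the $k$-th term of the sum projects to $+g_{k-1}=+o(\mathfrak{e})$. When $y_k=x_i^{-1}$, one has $g_{k-1}=g_k x_i$, so the $k$-th step crosses the edge $\mathfrak{e}=(g_k,g_{k-1},x_i)$ backwards (a negative crossing), and the $k$-th term projects to $-\pi(y_1\cdots y_{k-1})x_i^{-1}=-g_k=-o(\mathfrak{e})$.

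Finally, collecting the coefficient of an arbitrary $g\in F_d/N$ in $\pi(\partial_i u)$ gives exactly the number of positive traversals of the edge $(g,gx_i,x_i)$ along this path minus the number of its negative traversals, which is by definition the value $\mathfrak{f}_u((g,gx_i,x_i))$. The one point requiring attention is the sign bookkeeping in the $y_k=x_i^{-1}$ case, where the edge of $\mathrm{Cay}(F_d/N)$ to attribute the contribution to is indexed by the endpoint $g_k$ rather than the start $g_{k-1}$; once this is handled the identity drops out, and the fact that the right-hand side depends only on $\theta(u)$, not on the particular word $u$, is consistent with Theorem \ref{thm: fox flows}.
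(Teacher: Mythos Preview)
Your argument is correct. The paper does not give its own proof of this lemma; it simply cites \cite[Lemma 2.6]{MyasnikovRomankovUshakovVershik2010}, so there is nothing in the paper to compare against. What you wrote is essentially the standard proof: the product-rule expansion of $\partial_i$ on a word, the computation $\partial_i(x_j^{-1})=-\delta_{ij}x_j^{-1}$, and the identification of each surviving term with a signed crossing of an $x_i$-labeled edge in $\mathrm{Cay}(F_d/N)$. Your handling of the $y_k=x_i^{-1}$ case is exactly the delicate point, and you treat it correctly by noting that the contribution $-\pi(y_1\cdots y_{k-1})x_i^{-1}=-g_k$ records a negative crossing of the edge with origin $g_k$.
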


\begin{rem}\label{remark: flow completely determines element of FdNN}
	Note that, for any element $g\in F_d/[N,N]$, the flow $\mathfrak{f}_g$ completely determines the projection $\widetilde{\pi}(g)\in F_d/N.$ Indeed, if $\mathfrak{f}_g$ is a circulation, then $\widetilde{\pi}(g)=e_{F_d/N}$ is the identity element, and otherwise $\widetilde{\pi}(g)$ is the sink of the geometric flow $\mathfrak{f}_g$. This observation together with Lemma \ref{lem: fox derivatives in terms of flows} and Theorem \ref{thm: FOX formulation Magnus embedding} imply that the element $g\in F_d/[N,N]$ is completely determined by the flow $\mathfrak{f}_g$.
\end{rem}

\subsection{Proofs of Corollaries \ref{cor: general description for general Magnus embedding} and \ref{cor: description for free solvable group}}

Now we consider random walks on the group $F_d/[N,N]$, for $N\lhd F_d$ a normal subgroup of the free group $F_d$ of rank $d$. In analogy with the stabilization of the lamp configuration on wreath products (Definition \ref{def: stabilization of lamps}), we define the stabilization of the flows associated with elements of $F_d/[N,N]$.

\begin{defn}\label{def: stabilization of flows}
Let $\mu$ be a probability measure on the group $F_d/[N,N]$. We say that the flows $\{\mathfrak{f}_{w_n}\}_{n\ge 0}$ associated with the $\mu$-random walk $\{w_n\}_{n\ge 0}$ on $F_d/[N,N]$ \emph{stabilize almost surely} if for every $\mathfrak{e}\in \mathrm{Edges}(F_d/N)$, there exists $N\ge 1$ such that $\mathfrak{f}_{w_n}(\mathfrak{e})=\mathfrak{f}_{w_N}(\mathfrak{e})$ for every $n\ge N$. 
\end{defn}
The stabilization of flows on the group $F_d/[N,N]$ is directly related to the stabilization of the lamp configuration in the wreath product $\Z^d\wr F_d/N$ via the Magnus embedding, as we state in the following lemma.
\begin{lem}\label{lem: flows stabilize iff lamps stabilize}
Let $\mu$ be a probability measure on the group $F_d/[N,N]$, and denote by $\widetilde{\phi}:F_d/[N,N]\to \Z^d\wr F_d/N$ the Magnus embedding. Then the flows $\{\mathfrak{f}_{w_n}\}_{n\ge 0}$ associated with the $\mu$-random walk $\{w_n\}_{n\ge 0}$ on $F_d/[N,N]$ stabilize almost surely if and only if the lamp configurations of $\left\{\widetilde{\phi}(w_n) \right\}_{n\ge 0}$ stabilize almost surely. In such a case, the edgewise limit $\mathfrak{f}_{\infty}:\mathrm{Edges}(F_d/N)\to \Z$ of the flows completely determines the pointwise limit $\varphi_{\infty}:F_d/N\to \Z^d$ of the lamp configurations and vice-versa.
\end{lem}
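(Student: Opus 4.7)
The plan is to exhibit a natural bijection between lamp configurations $F_d/N \to \Z^d$ and flows $\mathrm{Edges}(F_d/N) \to \Z$, under which pointwise convergence on the lamp side corresponds exactly to edgewise convergence on the flow side. Both stabilization conditions will then read as the same statement, and the limits will determine each other through this bijection.

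First I would make the bijection explicit. Every edge of $\mathrm{Cay}(F_d/N)$ is uniquely of the form $\mathfrak{e} = (h, hx_i, x_i)$ for some $h \in F_d/N$ and $i \in \{1, \ldots, d\}$, so the set $\mathrm{Edges}(F_d/N)$ is in natural bijection with $(F_d/N) \times \{1, \ldots, d\}$. Given a function $\mathfrak{f} : \mathrm{Edges}(F_d/N) \to \Z$, define $\varphi_{\mathfrak{f}} : F_d/N \to \Z^d$ by $\varphi_{\mathfrak{f}}(h)_i \coloneqq \mathfrak{f}((h, hx_i, x_i))$. This is clearly a bijection, and $\mathfrak{f}$ is finitely supported on edges if and only if $\varphi_{\mathfrak{f}}$ is finitely supported on $F_d/N$.

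Next I would verify that under the Magnus embedding, this bijection identifies $\mathfrak{f}_g$ with the lamp configuration of $\widetilde\phi(g)$. By Theorem \ref{thm: FOX formulation Magnus embedding}, the lamp coordinate of $\widetilde\phi(g)$ is $\sum_{i=1}^d \pi(\partial_i u) \cdot t_i$ for any lift $u \in F_d$ of $g$. Applying Lemma \ref{lem: fox derivatives in terms of flows} to each Fox derivative gives
\begin{equation*}
\sum_{i=1}^d \pi(\partial_i u) \cdot t_i = \sum_{i=1}^d \sum_{h \in F_d/N} \mathfrak{f}_u((h, hx_i, x_i)) \, h \cdot t_i,
\end{equation*}
which, after identifying $T = \bigoplus_{F_d/N} \Z^d$ via the basis $\{t_1, \ldots, t_d\}$, is precisely the lamp configuration $h \mapsto (\mathfrak{f}_u((h, hx_i, x_i)))_{i=1}^d = \varphi_{\mathfrak{f}_g}(h)$. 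So the lamp configuration of $\widetilde\phi(w_n)$ at the vertex $h$ is the vector of flow values $\mathfrak{f}_{w_n}$ on the $d$ edges outgoing from $h$.

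From this identification both equivalences follow immediately. The flow $\mathfrak{f}_{w_n}$ stabilizes at every edge $(h, hx_i, x_i)$ if and only if, for every $h \in F_d/N$, each of the $d$ coordinates of $\varphi_{\mathfrak{f}_{w_n}}(h) \in \Z^d$ is eventually constant, which is the same as saying the lamp configuration of $\widetilde\phi(w_n)$ stabilizes at $h$. Moreover, if stabilization occurs, then the edgewise limit $\mathfrak{f}_{\infty}$ and the pointwise limit $\varphi_{\infty}$ are related by $\varphi_{\infty}(h)_i = \mathfrak{f}_{\infty}((h, hx_i, x_i))$, so each determines the other. There is no serious obstacle here; the lemma is purely a bookkeeping consequence of the Fox-derivative description of the Magnus embedding recalled in Theorem \ref{thm: FOX formulation Magnus embedding} and Lemma \ref{lem: fox derivatives in terms of flows}.
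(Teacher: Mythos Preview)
Your proof is correct and follows essentially the same route as the paper: both arguments combine Theorem \ref{thm: FOX formulation Magnus embedding} with Lemma \ref{lem: fox derivatives in terms of flows} to obtain the identity $\varphi_n(h)_i = \mathfrak{f}_{w_n}((h,hx_i,x_i))$, from which the equivalence of stabilizations and the mutual determination of the limits are immediate. The only cosmetic difference is that you phrase the correspondence as an abstract bijection between edge-functions and lamp configurations, whereas the paper writes the same formula directly for the random walk.
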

\begin{proof}

For each sample path $\{w_n\}_{n\ge 0}\in (F_d/[N,N])^{\infty}$, let us consider its image $\left\{\widetilde{\phi}(w_n) \right\}_{n\ge 0}\in (\Z^d\wr F_d/N)^{\infty}$ via the Magnus embedding. Theorem \ref{thm: FOX formulation Magnus embedding} states that we can write
	\begin{equation*}
	\widetilde{\phi}(w_n)=\begin{pmatrix}
		\widetilde{\pi}(w_n) & \sum_{i=1}^d \pi(\partial_i u_n)\cdot t_i\\ 0 & 1
	\end{pmatrix},
\end{equation*}

where $u_n\in F_d$ is any element such that $\theta(u_n)=w_n$. In addition, by virtue of Lemma \ref{lem: fox derivatives in terms of flows}, we can write
$
\pi(\partial_i u_n)=\sum_{g\in F_d/N}\mathfrak{f}_{w_n}((g,gx_i,x_i))g, \text{ for each }i=1,\ldots,d.
$ Let us denote by $\varphi_n:F_d/N\to \Z^d$ the lamp configuration of $\widetilde{\phi}(w_n)$. Then for each $g\in F_d/N$, the above implies that
\begin{equation}\label{eq: direct relation lamp flows}
\varphi_n(g)=\sum_{i=1}^d \mathfrak{f}_{w_n}((g,gx_i,x_i))t_i, \text{ for all }n\ge 1.
\end{equation}
Equation \eqref{eq: direct relation lamp flows} directly relies the value of the lamp configuration at an element $g\in F_d/[N,N]$ with the values of the flow at edges $\mathfrak{e}\in \mathrm{Edges}(F_d/N)$ with origin $o(\mathfrak{e})=g$. Since Equation \eqref{eq: direct relation lamp flows} consists of a finite sum, we conclude that the flows stabilize along a sample path if and only if the lamp configuration stabilizes along the image of the sample path via the Magnus embedding. In such a case, we see also from Equation \eqref{eq: direct relation lamp flows}, that the edgewise limit flow $\mathfrak{f}_{\infty}$ and the pointwise limit lamp configuration $\varphi_{\infty}$ satisfy
\begin{equation*}
	\varphi_{\infty}(g)=\sum_{i=1}^d \mathfrak{f}_{\infty}((g,gx_i,x_i))t_i, \text{ for all }g\in F_d/N.
\end{equation*}
We conclude that $\varphi_{\infty}$ completely determines $\mathfrak{f}_{\infty}$ and vice-versa.
\end{proof}

Suppose that the flows stabilize almost surely along sample paths of the $\mu$-random walk on $F_d/[N,N]$. this implies that there is a shift-invariant map $\mathcal{F}:(F_d/[N,N])^{\mathbb{N}}\to \Z^{\mathrm{Edges}(F_d/N)}$ that is measurable and well-defined for $\P$-a.e.\ sample path $\{w_n\}_{n\ge 0}$ of the $\mu$-random walk on $F_d/[N,N]$, given by the edgewise limit of the flows:
\(
\mathcal{F}((w_n)_{n\ge 0})\coloneqq \lim_{n\to \infty} \mathfrak{f}_{w_n}.
\) 
The hitting measure is $\nu_{\mathcal{F}}\coloneqq \mathcal{F}_{*}\P$. Hence, the space $(\Z^{\mathrm{Edges}(F_d/N)},\nu_{\mathcal{F}})$ is a $\mu$-boundary. Let us now consider the group $B=F_d/N$ and the canonical projection $\widetilde{\pi}:F_d/[N,N]\to B$. Denote by $(\partial B,\nu_B)$ the Poisson boundary of the $\widetilde{\pi}_{*}\mu$-random walk on $B$, and denote by $\mathrm{bnd}_B:B^{\mathbb{N}}\to \partial B$ the corresponding boundary map. Then the map
\begin{align*}
		\left(F_d/[N,N]\right)^{\mathbb{N}}&\to \Z^{\mathrm{Edges}(F_d/N) } \times \partial B\\
		w&\mapsto \left(\mathcal{F}(w), \mathrm{bnd}_B\left( \{\widetilde{\pi}(w_n)\}_{n\ge 0} \right) \right),
\end{align*}
pushes forward $\P$ to a $\mu$-stationary measure $\nu$ on $ \Z^{\mathrm{Edges}(F_d/N)}\times \partial B$. With this, the space $( \Z^{\mathrm{Edges}(F_d/N)}\times \partial B,\nu)$ is a $\mu$-boundary of $F_d/[N,N]$.

\begin{lem}\label{lem: via MAGNUS equal entropies}
	Let $\mu$ be a probability measure on the group $F_d/[N,N]$, and denote by $\widetilde{\phi}:F_d/[N,N]\to \Z^d\wr F_d/N$ the Magnus embedding. Denote by $\{w_n\}_{n\ge 0}$ a sample path for the $\mu$-random walk on $F_d/[N,N]$. Then, denoting $B=F_d/N$, we have
	\begin{equation}\label{eq: general entropy MAGNUS}
			H_{\Z^{\mathrm{Edges}(F_d/N)}\times \partial B}(w_n)=H_{(\Z^d)^B\times \partial B}\left(\widetilde{\phi}(w_n)\right), \text{ for every }n\ge 1, \text{ and}
\end{equation}
	\begin{equation}\label{eq: special entropy MAGNUS}
	H_{\Z^{\mathrm{Edges}(F_d/N)}}(w_n)=H_{(\Z^d)^B }\left(\widetilde{\phi}(w_n)\right), \text{ for every }n\ge 1.
\end{equation}

\end{lem}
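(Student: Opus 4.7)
The proof rests on two elementary observations, after which both equalities are essentially formal. Throughout, recall that $B=F_d/N$.

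First, by Theorem \ref{thm: original formulation Magnus embedding} the Magnus embedding $\widetilde{\phi}: F_d/[N,N]\to \Z^d \wr B$ is injective, so the random variables $w_n$ and $\widetilde{\phi}(w_n)$ on the path space $(F_d/[N,N])^{\infty}$ determine each other and generate the same measurable partition. Consequently, for any $\mu$-boundary $\mathbf{X}$ of $F_d/[N,N]$, one has $H_{\mathbf{X}}(w_n)=H_{\mathbf{X}}(\widetilde{\phi}(w_n))$.

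Second, the coordinatewise formula $\varphi_{\infty}(g)=\sum_{i=1}^d \mathfrak{f}_{\infty}((g,gx_i,x_i))t_i$ established in the proof of Lemma \ref{lem: flows stabilize iff lamps stabilize} (see Equation \eqref{eq: direct relation lamp flows}) provides a measurable bijection between $(\Z^{\mathrm{Edges}(B)},\nu_{\mathcal{F}})$ and $((\Z^d)^{B},\nu_{\mathcal{L}})$ that intertwines the two corresponding boundary maps from $(F_d/[N,N])^{\infty}$. Since this bijection is induced by the Magnus embedding applied trajectorywise, it is automatically $F_d/[N,N]$-equivariant and measure-preserving, hence an isomorphism of $\mu$-boundaries. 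Multiplying by the common factor $(\partial B,\nu_B)$, the same argument yields an isomorphism between $(\Z^{\mathrm{Edges}(B)}\times \partial B,\nu)$ and $((\Z^d)^{B}\times \partial B, \widetilde{\phi}_*\nu)$.

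Combining the two observations, the mean conditional entropy of the partition of the path space induced by $w_n$ (equivalently, by $\widetilde{\phi}(w_n)$) takes the same value when computed over isomorphic $\mu$-boundaries, which yields both Equations \eqref{eq: general entropy MAGNUS} and \eqref{eq: special entropy MAGNUS}. No serious obstacle is expected here: the only delicate point is checking that the flow-to-lamp correspondence is genuinely $G$-equivariant and measure-preserving, but this is immediate from the fact that both the limit flow and the limit lamp configuration are extracted from the same trajectory $\{w_n\}_{n\ge 0}$ via its image under $\widetilde{\phi}$.
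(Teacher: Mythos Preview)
Your proposal is correct and follows essentially the same approach as the paper's proof. The paper argues in one compressed step—writing out the defining integral for $H_{\Z^{\mathrm{Edges}(F_d/N)}\times \partial B}(w_n)$ and using the bijection from Lemma~\ref{lem: flows stabilize iff lamps stabilize} together with injectivity of $\widetilde{\phi}$ to change variables—whereas you isolate the two ingredients (same partition, isomorphic boundaries) explicitly; the logic is identical.
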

\begin{proof}
	This result follows from  Lemma \ref{lem: flows stabilize iff lamps stabilize}, which states that the edgewise limit flow is completely determined by the pointwise limit lamp configuration, as well as the other way around. To prove Equation \eqref{eq: general entropy MAGNUS}, note that we have
	\begin{align*}
			H_{\Z^{\mathrm{Edges}(F_d/N)}\times \partial B}(w_n)&=\int_{\Z^{\mathrm{Edges}(F_d/N)}\times \partial B}H_{(\mathfrak{f}_{\infty},\xi)}(w_n)\ d\nu(\mathfrak{f}_{\infty},\xi)\\
			&=\int_{(\Z^d)^B\times \partial B}H_{(\varphi_{\infty},\xi)}(\widetilde{\phi}(w_n))\ d\nu(\varphi_{\infty},\xi)\\
			&=H_{(\Z^d)^B\times \partial B}\left(\widetilde{\phi}(w_n)\right).
	\end{align*}

Equation \eqref{eq: special entropy MAGNUS} is proved in a completely analogous way, noting that Lemma \ref{lem: flows stabilize iff lamps stabilize} does not depend on what the Poisson boundary of the induced random walk on $B$ is.
\end{proof}
We finish the paper with the proofs of Corollaries \ref{cor: general description for general Magnus embedding} and \ref{cor: description for free solvable group}.	
\begin{proof}[Proof of Corollary \ref{cor: general description for general Magnus embedding}]
 Denote by $\widetilde{\phi}:F_d/[N,N]\to \Z^d\wr F_d/N$ the Magnus embedding. If $\mu$ is a probability measure with finite entropy, then $\widetilde{\phi}_{*}\mu$ also has finite entropy. Furthermore, if the flows stabilize almost surely along sample paths of the $\mu$-random walk on $F_d/[N,N]$, then Lemma \ref{lem: flows stabilize iff lamps stabilize} implies that the lamp configuration stabilizes along sample paths of the $\widetilde{\phi}_{*}\mu$-random walk on $\Z^d\wr F_d/N$. Then, Theorem \ref{thm: lamplighter Poisson boundary} implies that the Poisson boundary of $(\Z^d\wr F_d/N, \widetilde{\phi}_{*}\mu)$ is the space $ (\Z^d)^B\times \partial B $ with the corresponding harmonic measure, where we denote $B\coloneqq F_d/N.$ Then, the conditional entropy criterion (Theorem \ref{thm: conditional entropy alternative formulation}) implies that $\lim_{n\to \infty} \frac{H_{(\Z^d)^B\times \partial B}\left(\widetilde{\phi}(w_n)\right)}{n}=0,$ and we conclude from Equation \eqref{eq: general entropy MAGNUS} of Lemma \ref{lem: via MAGNUS equal entropies}, that
\begin{equation*}
	\lim_{n\to \infty} \frac{H_{\Z^{\mathrm{Edges}(F_d/N)}\times \partial B}(w_n)}{n}=	\lim_{n\to \infty} \frac{H_{(\Z^d)^B\times \partial B}\left(\widetilde{\phi}(w_n)\right)}{n}=0.
\end{equation*}
Hence, $\Z^{\mathrm{Edges}(F_d/N)}\times \partial B$ with the corresponding hitting measure is the Poisson boundary of $(F_d/[N,N],\mu)$.
	
	If we furthermore suppose that $\mu$ has a finite first moment, then $\widetilde{\phi}_{*}\mu$ also has a finite first moment. Then Theorem \ref{thm: Poisson for finite first moment} together with the conditional entropy criterion imply that $\lim_{n\to \infty} \frac{H_{(\Z^d)^B}\left(\widetilde{\phi}(w_n)\right)}{n}=0.$ Finally, we use Equation \eqref{eq: special entropy MAGNUS} from Lemma \ref{lem: via MAGNUS equal entropies} to obtain
\begin{equation*}
	\lim_{n\to \infty} \frac{H_{\Z^{\mathrm{Edges}(F_d/N)}}(w_n)}{n}=	\lim_{n\to \infty} \frac{H_{(\Z^d)^B}\left(\widetilde{\phi}(w_n)\right)}{n}=0,
	\end{equation*}
	so that $\Z^{\mathrm{Edges}(F_d/N)}$ endowed with the corresponding hitting measure is the Poisson boundary of $(F_d/[N,N],\mu)$.
\end{proof}

\begin{proof}[Proof of Corollary \ref{cor: description for free solvable group}]
		Consider the free solvable group $S_{d,k}$ of rank $d\ge 2$ and derived length $k\ge 2$. Let $\mu$ be an adapted probability measure on $S_{d,k}$, and suppose that $\mu$ has a finite first moment. Denote by $\widetilde{\phi}:S_{d,k}\to \Z^d\wr S_{d,k-1}$ the Magnus embedding.
	
	Suppose that $\mu$ induces a transient random walk on $S_{d,k-1}$, via the projection to $\widetilde{\pi}:S_{d,k}\to S_{d,k-1}$. Note that this is always the case if $d\ge 3$ or $k\ge 3$. Indeed, in such cases the group $S_{d,k-1}$ grows at least cubically and hence any adapted measure will induce a transient random walk \cite{Varopoulos1986} (see also \cite[Theorem 3.24]{Woess2000}). Then, it follows from the second item of Corollary \ref{cor: general description for general Magnus embedding} that the Poisson boundary of $(\Z^d\wr S_{d,k-1},\widetilde{\phi}_{*}\mu)$ is $(\Z^d)^{S_{d,k-1}}$, endowed with the corresponding hitting measure.

	 Suppose now that $\mu$ induces a recurrent random walk on $S_{d,k-1}$. As we remarked above, since $\mu$ is adapted, this can only happen when $d=k=2$. In this case, the Poisson boundary of the $\mu$-random walk on  $S_{2,2}$ coincides with the Poisson boundary of the $\widetilde{\phi}_{*}\mu$-random walk on $\Z^2\wr \Z^2$. Since $\widetilde{\phi}_{*}\mu$ induces a recurrent random walk on $\Z^2$, the aforementioned Poisson boundary is isomorphic to the Poisson boundary of the induced random walk on the subgroup $\bigoplus_{\Z^2}\Z^2$ (see \cite[Lemma 4.2]{Furstenberg1971}). Since this subgroup is abelian, we conclude the triviality of the Poisson boundary.
\end{proof}

\bibliographystyle{alpha}
\bibliography{biblio}
\end{document}